\title{Groupoidal and truncated $n$-quasi-categories}
\author{Victor Brittes}
\date{}
\numberwithin{equation}{section}
\newtheorem{theorem}[equation]{Theorem}
\newtheorem{proposition}[equation]{Proposition} 
\newtheorem{corollary}[equation]{Corollary} 
\newtheorem{lemma}[equation]{Lemma} %
\theoremstyle{definition}
\newtheorem{parag}[equation]{}
\newtheorem{definition}[equation]{Definition}
\theoremstyle{remark}
\newtheorem{example}[equation]{Example}
\newtheorem{remark}[equation]{Remark}
\def\a{\mathcal{A}}
\def\b{\mathcal{B}}
\def\cc{\mathcal{C}}
\def\d{\mathcal{D}}
\def\m{\mathcal{M}}
\def\n{\mathcal{N}}
\def\i{\mathcal{I}}
\def\j{\mathcal{J}}
\def\p{\mathcal{P}}
\def\y{\mathcal{Y}}
\def\R{\mathbb{R}}
\def\L{\mathbb{L}}
\def\W{\mathsf{W}}
\def\Sph{\mathbb{S}}
\def\Fib{\mathsf{Fib}}
\def\Cof{\mathsf{Cof}}
\mathchardef\mhyphen="2D
\DeclareMathOperator{\op}{op}
\DeclareMathOperator{\id}{id}
\DeclareMathOperator{\colim}{colim}
\DeclareMathOperator{\Hom}{Hom}
\DeclareMathOperator{\ho}{ho}
\DeclareMathOperator{\ev}{ev}
\DeclareMathOperator{\ct}{ct}
\DeclareMathOperator{\Ho}{Ho}
\DeclareMathOperator{\loc}{loc}
\DeclareMathOperator{\Set}{\mathbf{Set}}
\DeclareMathOperator{\Top}{\mathbf{Top}}
\DeclareMathOperator{\Cat}{\mathbf{Cat}}
\DeclareMathOperator{\ncat}{n \mathbf{\mhyphen Cat}}
\DeclareMathOperator{\Gpd}{\mathbf{Gpd}}
\DeclareMathOperator{\qcat}{QCat}
\DeclareMathOperator{\sset}{\mathbf{PSh}(\Delta)}
\DeclareMathOperator{\tset}{\mathbf{PSh}(\Theta_n)}
\DeclareMathOperator{\stset}{\mathbf{PSh}_{\Delta}(\Theta_n)}
\DeclareMathOperator{\psh}{\mathbf{PSh}}
\DeclareMathOperator{\spsh}{\mathbf{PSh}_{\Delta}}
\DeclareMathOperator{\kancx}{Sp}
\DeclareMathOperator{\kancxnk}{Sp_{n+k}}
\DeclareMathOperator{\kancxm}{Sp_{m}}
\DeclareMathOperator{\nqcat}{n \mhyphen QCat}
\DeclareMathOperator{\npqcat}{(n+1) \mhyphen QCat}
\DeclareMathOperator{\nmqcat}{(n-1) \mhyphen QCat}
\DeclareMathOperator{\nmqcatgp}{(n-1) \mhyphen QGpd}
\DeclareMathOperator{\nqcatgp}{n \mhyphen QGpd}
\DeclareMathOperator{\nqcatkt}{n \mhyphen QCat_{k}}
\DeclareMathOperator{\nqcatgpkt}{n \mhyphen QGpd_{k}}
\DeclareMathOperator{\thetansp}{\Theta_n Sp}
\DeclareMathOperator{\thetanspgp}{\Theta_n Gpd}
\DeclareMathOperator{\thetanspkt}{\Theta_n Sp_k}
\DeclareMathOperator{\thetanspgpkt}{\Theta_n Gpd_{k}}
\subjclass[2020]{18N20, 18N40, 18N55, 18N65, 55P15}
\keywords{n-quasi-categories, homotopy n-types, $\Theta_n$-sets}
\date{03 June 2022}
\begin{document}
\maketitle

\begin{abstract}

We define groupoidal and $(n+k)$-truncated $n$-quasi-categories, which are the translation to the world of $n$-quasi-categories of groupoidal and truncated $(\infty, n)$-$\Theta$-spaces defined by Rezk. We show that these objects are the fibrant objects of model structures on the category of presheaves on $\Theta_n$ obtained by localisation of Ara's model structure for $n$-quasi-categories. Furthermore, we prove that the inclusion $\Delta \to \Theta_n$ induces a Quillen equivalence between the model structure for groupoidal (resp. and $n$-truncated) $n$-quasi-categories and the Kan-Quillen model structure for spaces (resp. homotopy $n$-types) on simplicial sets. To get to these results, we also construct a cylinder object for $n$-quasi-categories.
\end{abstract}

\tableofcontents

\section*{Introduction}


There are several models for $(\infty, n)$-categories (see \cite{bergner2020survey} for a survey). Two closely related models are $(\infty, n)$-$\Theta$-spaces and $n$-quasi-categories, defined in \cite{rezk2010cartesian} and \cite{ara2014higher}, respectively. Ara presents in \cite{ara2014higher} two different Quillen equivalences
    $$p^*: \thetansp \rightleftarrows \nqcat: i_0^*$$
and 
    $$t_!: \nqcat \rightleftarrows \thetansp : t^! $$
between both models.

One could say that Rezk's model is more homotopical or topological, since the objects of the underlying category are simplicial presheaves, and many of the constructions and definitions of this model use the Kan-Quillen model structure on simplicial sets. Indeed, the family of theories of $(\infty, n)$-$\Theta$-spaces starts at $n = 0$, which corresponds exactly to the classical homotopy theory of simplicial sets. On the other hand, Ara's model is more combinatorial, since it deals with presheaves of sets over a given category. The family of theories of $n$-quasi-categories starts at $n=1$, where we find the theory of (1-)quasi-categories. Quasi-categories are one of the main models of $(\infty, 1)$-categories, successfully and extensively developed in \cite{lurie2009higher} and \cite{joyal2008thetheory}, for example. Thus, it is expected that $n$-quasi-categories can provide a useful model for $(\infty, n)$-categories as well. 

The main goal of this work is to show how some ideas developed by Rezk in the context of $(\infty, n)$-$\Theta$-spaces, such as that of truncated and groupoidal objects, can be transferred to the world of $n$-quasi-categories. The interest of having such definitions for $n$-quasi-categories is that, due to its more set-theorical and minimal nature, it can be easier to construct comparisons between them and more algebraic models of higher categories. For example, we expect to be able to compare $n$-truncated $n$-quasi-categories with some notion of semi-strict $n$-category, like what is done for $n=1$ and $n=2$ in \cite{campbell2020truncated} and \cite{campbell2020homotopy}, respectively, and later compare $n$-truncated $n$-quasi-groupoids with semi-strict $n$-groupoids, like what is done in \cite{brittes2022groupoidal} in the case $n=2$. Of course, for $n =1,2$, semi-strict $n$-categories are just strict $n$-categories, but for $n \geq 3$ it is known that the strict versions are not enough.

Let us now explain more precisely what are Rezk's results that we aim to transfer to Ara's model. Firstly, it is shown in \cite{rezk2010cartesian} that there is a Bousfield localisation $\thetanspkt$ of $\thetansp$ whose fibrant objects are $(n+k, n)$-$\Theta$-spaces (a truncated version of $(\infty, n)$-$\Theta$-spaces). We define $(n+k)$-truncated $n$-quasi-categories, generalizing the inductive definition given in \cite{campbell2020homotopy} for the case $n = 2, k= 0$ (the case $n = 1$ was defined in \cite{joyal2008notes} and further studied in \cite{campbell2020truncated}). We show that $(n+k)$-truncated $n$-categories are fibrant objects for a localisation of $\nqcat$, and that both Ara's Quillen equivalences presented above descend into the level of $(n+k)$-truncated objects.

Also in \cite{rezk2010cartesian}, there is another localisation $\thetanspgp$ of $\thetansp$, whose homotopy category is equivalent to the homotopy category of spaces. We define groupoidal $n$-quasi-categories, generalising the definition given in \cite{brittes2022groupoidal} for the case $n = 2$. We show that these are the fibrant objects of a localisation $\nqcatgp$ of $\nqcat$, and that $\nqcatgp$ and $\thetanspgp$ are Quillen equivalent via both adjunctions above.

A last result of \cite{rezk2010cartesian} provides a direct equivalence comparing $\thetanspgp$ and $\kancx$ (the Kan-Quillen model category on simplicial sets), and similarly for their truncated versions. Therefore, we could use the results of the previous paragraphs to provide comparisons between $\nqcatgp$ and $\kancx$. However, the inclusion $i: \Delta \to \Theta_n$ allows us to also provide a different, new Quillen equivalence between $\nqcatgp$ and $\kancx$, which does not factor through $\thetanspgp$. Moreover, we show that this Quillen equivalence induces another one between $\nqcatgpkt$ and $\kancxnk$ (the model structure for homotopy $(n+k)$-types). 

\vspace{2mm}

We would like to highlight the main differences in the techniques used in this work and in \cite{brittes2022groupoidal}, which addressed similar questions for the case $n=2$. Both repeatedly use the theory of localisation of model structures. However, some results for $n=2$, established in \cite{campbell2020homotopy} and used in \cite{brittes2022groupoidal}, do not easily generalize to the general case. We give 3 examples. 

The first one concerns the way we show that $n$-quasi-groupoids are equivalent to spaces via an adjunction induced by the inclusion $i: \Delta \to \Theta_n$ (Theorem \ref{quillen eq kan cx groupoidal nqcat} of this article for general $n$ and \cite[Theorem 3.14]{brittes2022groupoidal} for $n=2$) . In the case of 2-quasi-groupoids, we use a Quillen equivalence proven by Campbell between quasi-categories and 2-quasi-categories which are locally Kan complexes, and then proceed by localisation. Here, the strategy is to fit the adjunction we want to show is a Quillen equivalence in a triangle of Quillen adjunctions, where we use Rezk's $(\infty, n)$-$\Theta$-groupoids to construct one of its edges. 

Secondly, the comparison between 2-truncated 2-quasi-groupoids and homotopy 2-types \cite[Corollary 6.3]{brittes2022groupoidal} uses Campbell's nerve for 2-categories \cite{campbell2020homotopy} and Moerdijk-Svensson's equivalence between 2-groupoids and homotopy 2-types \cite{moerdijk1993algebraic}. To compare $(n+k)$-truncated $n$-quasi-groupoids and homotopy $(n+k)$-types (Theorem \ref{quillen eq truncated n-q-groupoids and homotopy n-types}), we show that two (a priori different) model structures are the same: the one for $(n+k)$-truncated $n$-quasi-groupoids ($\nqcatgpkt$) and a localisation of $\nqcatgp$, deliberately built to be equivalent to the one for $(n+k)$-homotopy types. The key idea to do so is to compare the boundaries of representable $\Theta_n$-sets, which can be interpreted in the model structure $\nqcatgp$ as model for spheres. In the Kan-Quillen model structure on simplicial sets, for every $k > 0$, we can think of $\partial \Delta[k]$ as a model for the sphere $\mathbb{S}^{k-1}$, which can be made precise by using the realisation functor $\sset \to \Top$, or by giving an abstract definition of sphere as a given homotopy pushout, which is the approach we use here. When working with presheaves on $\Theta_n$, there can be more than one representable such that its boundary models a given sphere. For example, in $\Theta_2$, the boundaries of both $[2]([0], [0])$ and $[1]([1])$ (respectively represented below) are models for $\mathbb{S}^1$. 

\[\begin{tikzcd}
	& \bullet \\
	&&&& \bullet && \bullet \\
	\bullet && \bullet
	\arrow[from=3-1, to=1-2]
	\arrow[from=1-2, to=3-3]
	\arrow[from=3-1, to=3-3]
	\arrow[curve={height=-30pt}, from=2-5, to=2-7]
	\arrow[curve={height=30pt}, from=2-5, to=2-7]
\end{tikzcd}\]

The third difference is in the proof that the suspension functor $\Sigma: \psh(\Theta_n) \to * \sqcup * / \psh(\Theta_{n+1})$ is a left Quillen functor. When $n=1$, this is proven in \cite{campbell2020homotopy}, using a simple recognition result for left Quillen functors from the model category for quasi-categories. This recognition result generalises to $n$-quasi-categories (Proposition \ref{recognition principle for left quillen functors}), with the addition of one extra condition to be verified, involving a certain cylinder object. The problem is that we do not know a simple combinatorial description for the "obvious" choice of cylinder for $n$-quasi-categories (the product with an interval object). The solution we found is to build a smaller, more combinatorial cylinder, in order to use such recognition principle. This construction, therefore fundamental to the proof of this result, is also of independent interest for other applications in the theory of $n$-quasi-categories. With this new cylinder, we are able to get a refined version of Proposition \ref{recognition principle for left quillen functors}, stated in Theorem \ref{recognition principle for left quillen functors 2}.

\vspace{2mm}

The following table summarizes all the model categories we work with in this article, the respective fibrant-cofibrant objects, and how each one of them fits in the general framework of $(r, s)$-categories: higher (weak) categories where all $k$-morphisms are (weakly) invertible for $k > s$ and (weakly) trivial for $k > r$.

\begin{center}
    \begin{tabular}{|c|c|c|c|}
  \hline
  \multicolumn{1}{|c|}{Category} & Model structure & Fibrant objects & $(r,s)$-category \\
  \hline
  \multirow{3}{*}{$\sset$} & $\kancx$ & Kan complexes & $(\infty, 0)$ \\
  & $\kancxm$ & Homotopy $m$-types & $(m,0)$ \\
  \hline
  \multirow{3}{*}{$\tset$} & $\nqcat$ & $n$-quasi-categories & $(\infty, n)$ \\
  & $\nqcatgp$ & $n$-quasi-groupoids  & $(\infty, 0)$ \\
  & $\nqcatkt$ & $(n+k)$-truncated $n$-quasi-categories  & $(n+k, n)$ \\
  & $\nqcatgpkt$ & $(n+k)$-truncated $n$-quasi-groupoids  & $(n+k, 0)$ \\
  \hline
  \multirow{3}{*}{$\stset$} & $\thetansp$ & $(\infty, n)$-$\Theta$-categories & $(\infty, n)$ \\
  & $\thetanspgp$ & $(\infty, n)$-$\Theta$-groupoids & $(\infty, 0)$ \\
  & $\thetanspkt$ & $(n+k,n)$-$\Theta$-categories & $(n+k, n)$ \\
  & $\thetanspgpkt$ & $(n+k,n)$-$\Theta$-groupoids & $(n+k, 0)$ \\
  \hline
\end{tabular}
\end{center}

\vspace{2mm}

\textbf{Organisation of the paper.} In the first preliminary section, we recall some aspects of the theory of localisation of model structures, our main technical tool. In section 2, we briefly recall the definition of the category $\Theta_n$, Ara's and Rezk's model structures for $(\infty, n)$-categories, and Ara's Quillen equivalences between them. The third is devoted to the construction of an alternative cylinder object for $n$-quasi-categories, that will be used in the next section to show that the suspension functor between $\Theta_n$-sets and bipointed $\Theta_{n+1}$-sets is left Quillen. Its right adjoint, the functor $\Hom$, plays a fundamental role in the definitions of truncated $n$-quasi-categories, presented in section 4, and of groupoidal $n$-quasi-categories, in section 5. Besides the definitions, these two sections also contain theorems stating the existence, for each version of $n$-quasi-category (truncated and groupoidal), of model structures which are Quillen equivalent to Rezk's ones. The last section provides a third, direct, comparison between $n$-quasi-groupoids and spaces, and its truncated version, which uses the sphere argument mentioned above

\vspace{2mm}

\textbf{Acknowledgement.} We would like to thank Muriel Livernet and Clemens Berger for the helpful and insightful conversations. We would also like to thank Felix Loubaton for the idea for the construction of the cylinder of Section 3.

\vspace{2mm}

\textbf{Notation.} If $\a$ is a small category, we denote by $\widehat{\a}$ the category of presheaves of sets on $\a$, i.e., of functors $\a^{\op} \to \Set$. If $X$ is an object of $\widehat{\a}$ and $a$ is an object of $\a$, we write $X_a$ for the set $X(a)$. The Yoneda embedding $a \mapsto \a[a] := \Hom_{\a}(-, a)$ defines a fully faithful functor $\a \to \widehat{\a}$. If $f: a \to b$ is a morphism in $\a$, we still denote by $f: \a[a] \to \a[b]$ its image under the Yoneda embedding. Sometimes, when there is no risk of confusion, we also denote by $a$ the image of an object $a \in \a$ by the Yoneda embedding.

When representing an adjunction by $F: \cc \rightleftarrows \d: G$, the functor $F$ is left adjoint to the functor $G$.

If $\a$ and $\b$ are two small categories and $\varphi: \a \to \b$ is a functor, we denote by $\varphi^*: \widehat{\b} \to \widehat{\a}$ the \textit{restriction functor} given by precomposition with (the opposite of) $\varphi$. Given an adjunction  $\varphi: \a \rightleftarrows \b: \psi$, considering restriction functors induces an adjunction $\varphi^*: \widehat{\b} \rightleftarrows \widehat{\a}: \psi^*$ between presheaf categories.

If $\a$ is a small category and $\cc$ is a cocomplete category, a functor $f: \a \to \cc$ induces an adjunction that we will denote by $f_!: \widehat{\a} \rightleftarrows \cc: f^!$, where $f_!$ is the left Kan extension of $f$ along the Yoneda embedding, and $f^!$ is the \textit{nerve functor} defined by $f^!(X)_a = \Hom_{\cc}(f(a), X)$ for $X \in \cc$ and $a \in \a$. In the case where $\cc = \widehat{\b}$ for some small category $\b$ and $f: \a \to \widehat{\b}$ is the composite of a functor $\varphi: \a \to \b$ with the Yoneda embedding, we abuse notation and denote by $\varphi_!: \widehat{\a} \rightleftarrows \widehat{\b}: \varphi^!$ the induced adjunction. Note that the Yoneda lemma implies that $\varphi^!$ is isomorphic to the restriction functor $\varphi^*$.

\section{Localisation of model category structures}

We recall some notions and results about the localisation of model categories. A complete reference is \cite{hirschhorn2003model}.

\begin{parag} \label{bousfield localisation}
Let $(\m, \Cof, \W, \Fib)$ be a model category structure on a category $\m$. A model category structure $(\m, \Cof_{\loc}, \W_{\loc}, \Fib_{\loc})$ on $\m$ is a \textit{(left) Bousfield localisation} of $(\m, \Cof, \W, \Fib)$ if $\Cof_{\loc} = \Cof$ and $\W \subset \W_{\loc}$. When studying a model category structure and a given localisation, we shall write $\m$ and $\m_{\loc}$ to refer to the original model structure and to its localisation, respectively. We shall also call \textit{local fibration} (resp. \textit{local fibrant object}, resp. \textit{local weak equivalence}) a fibration (resp. fibrant object, resp. weak equivalence) of $\m_{\loc}$. Note that we have a Quillen adjunction
$$\id: \m \rightleftarrows \m_{\loc}: \id$$

We see that a Bousfield localisation is completely determined by its fibrant objects, i.e., the local fibrant objects. It is useful to know that a morphism between local fibrant objects is a weak equivalence (resp. fibration) in $\m$ if and only if it is a local weak equivalence (resp. local fibration).
\end{parag}

\begin{parag} \label{homotopy mapping space and local objects}
Given a model category $\m$ and two objects $X, Y$ of $\m$, we can consider the \textit{homotopy mapping space} $\underline{\Ho \m}(X, Y)$, which is the image of the pair $(X, Y)$ by the functor $\underline{\Ho \m}: \Ho(\m)^{\op} \times \Ho(\m) \to \Ho(\widehat{\Delta})$\footnote{When writing $\Ho(\widehat{\Delta})$, we always consider the Kan-Quillen model structure on the category of simplicial sets} induced by $\Hom_{\m}: \m^{\op} \times \m \to \Set$ (see \cite[A.2]{ara2014higher} or \cite{hirschhorn2003model} for the details). 

Let $S$ be a class of morphisms of $\m$. An object $X$ of $\m$ is \textit{$S$-local} (or \textit{local with respect to $S$}) if for every morphism $f: A \to B$ of $S$, the induced map
$$\underline{\Ho \m}(f, X): \underline{\Ho \m}(B, X) \to \underline{\Ho \m}(A, X)$$
is an isomorphism (in the homotopy category $\Ho(\widehat{\Delta})$).

A morphism $f: A \to B$ of $\m$ is an \textit{$S$-equivalence} if for every $S$-local object $X$, the induced map
$$\underline{\Ho \m}(f, X): \underline{\Ho \m}(B, X) \to \underline{\Ho \m}(A, X)$$
is an isomorphism (in the homotopy category $\Ho(\widehat{\Delta})$).

If there is a Bousfield localisation $\m_{\loc}$ of $\m$ whose local fibrant objects are the $S$-local objects and whose weak equivalences are the $S$-equivalences, we say that $\m_{\loc}$ is a \textit{(Bousfield) localisation of $\m$ with respect to $S$}, and we denote it by $L_S \m$.
\end{parag}

\begin{example} \label{Kan-Quillen vs Joyal}
The Kan-Quillen model category structure on simplicial sets is a localisation of Joyal's model structure with respect to the morphism $\Delta[1] \to \Delta[0]$, cf. \cite[Proposition 3.30]{campbell2020truncated}.
\end{example}

We will state a result, due to Smith, about the existence of the localisation of a model category with respect to a certain set of morphisms. Before, let us recall some definitions. A model category is \textit{left proper} if the pushout of every weak equivalence along a cofibration is a weak equivalence. A model category where all objects are cofibrant is left proper (see \cite[Corollary 13.1.3]{hirschhorn2003model}). A model category is \textit{combinatorial} if it is cofibrantly generated and locally presentable. 


\begin{theorem} \label{smith existence theorem}
Let $\m$ be a left proper and combinatorial model category. Let $S$ be a set of morphisms of $\m$. Then the localisation $L_S \m$ of $\m$ with respect to $S$ exists and is left proper and combinatorial.
\end{theorem}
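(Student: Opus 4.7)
The plan is to invoke Jeff Smith's recognition theorem for combinatorial model categories, which produces such a structure from the data of a locally presentable category, a set $I$ of generating cofibrations, and a class $\W_{\loc}$ of weak equivalences satisfying certain closure and accessibility properties. Concretely I would take $I$ to be the generating cofibrations of $\m$ (so $\Cof_{\loc} = \Cof$), declare $\W_{\loc}$ to be the class of $S$-equivalences, and define $\Fib_{\loc}$ by the right lifting property against trivial cofibrations. The fibrant objects will then automatically be the $S$-local ones.

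First I would dispatch the purely formal properties of $\W_{\loc}$: the 2-out-of-3 property and closure under retracts follow immediately from the definition via the bifunctor $\underline{\Ho \m}(-, -)$, since isomorphisms in $\Ho(\widehat{\Delta})$ satisfy these; the inclusion $\W \subset \W_{\loc}$ is automatic because $\underline{\Ho \m}$ already inverts weak equivalences of $\m$; and maps with the right lifting property against $I$ (i.e., the original trivial fibrations of $\m$) are in $\W$, hence in $\W_{\loc}$. The first technical point is verifying that $\W_{\loc}$ is an accessible and accessibly embedded subcategory of $\m^{[1]}$, which requires replacing the definition of $S$-locality by an equivalent formulation using a functorial cosimplicial resolution of the (co)domains of the maps in $S$, so that $S$-locality is detected by a small set of lifting conditions against an explicit accessible functor.

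The main obstacle is producing a set $J_S$ of generating trivial cofibrations. The standard strategy is the horn-filling construction of Smith and Hirschhorn: for each $f: A \to B$ in $S$, replace $f$ by a cofibration $\tilde{f}$ between cofibrant objects obtained by factoring using $I$, then form the pushout-product of $\tilde{f}$ with the boundary inclusions $\partial \Delta[k] \to \Delta[k]$ against a chosen cosimplicial frame; adjoining these "$S$-horns" to the generating trivial cofibrations $J$ of $\m$ yields $J_S$. One then checks that a map between $S$-local objects has the right lifting property against $J_S$ if and only if it is a fibration in $\m$, which combined with the small object argument gives the required functorial factorisations and closes the verification of the model category axioms.

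Finally, to conclude left properness of $L_S\m$: the cofibrations coincide with those of $\m$, and the pushout of an $S$-equivalence along a cofibration is still an $S$-equivalence, because left properness of $\m$ ensures that such pushouts compute homotopy pushouts, and $S$-equivalences are characterised in homotopical terms via $\underline{\Ho \m}(-, X)$ for $S$-local $X$. Combinatoriality is immediate, since $I$ and $J_S$ are sets and the underlying category is locally presentable. The full details are carried out in \cite{hirschhorn2003model}.
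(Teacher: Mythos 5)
Your outline is essentially the standard proof of Smith's theorem, and it is sound; the paper itself does not prove the statement but simply cites Barwick's Theorem 4.7, whose argument runs exactly along the lines you sketch (Smith's recognition criterion applied to the class of $S$-equivalences, with the accessibility of that class as the key technical input). Two remarks on your write-up. First, you are blending two distinct strategies: in the pure Smith-recognition route the set of generating trivial cofibrations is not constructed by hand but is extracted from a solution-set/accessibility argument once one knows the $S$-equivalences form an accessible, accessibly embedded full subcategory of the arrow category; the explicit ``$S$-horn'' set $J_S$ obtained by pushout-products with $\partial\Delta[k]\to\Delta[k]$ against cosimplicial frames is Hirschhorn's route, where the key lemma is indeed that a map with fibrant $S$-local codomain is a $J_S$-injective if and only if it is an $S$-local fibration. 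Either route works, but the verifications differ, and in the combinatorial (non-cellular) setting the Hirschhorn-style argument cannot lean on compactness of cells and must be replaced by the accessibility considerations you mention. Second, your closing citation is slightly off: Hirschhorn's existence theorem is stated for left proper \emph{cellular} model categories, not combinatorial ones, so the full details for the statement as given here are carried out in Barwick's paper (the reference the paper uses) or in Lurie's appendix on combinatorial model categories, rather than in Hirschhorn. Your left properness argument (pushouts of $S$-equivalences along cofibrations are homotopy pushouts by left properness of $\m$, and $S$-equivalences are detected by mapping into $S$-local objects) and the combinatoriality claim are both correct as stated.
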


\begin{proof}
See \cite[Theorem 4.7]{barwick2010left}.
\end{proof}

\begin{remark}
If $F: \m \rightleftarrows \n: G$ is a Quillen adjunction between model categories, the induced adjunction between the homotopy categories is usually denoted by $\L F: \Ho (\m) \rightleftarrows \Ho(\n): \R G$. In what follows, we will abuse language and also denote by $\L F$ the functor $\L F := F Q: \m \to \n$, where $Q$ is a fixed functorial cofibrant replacement in the model category $\m$. In all the applications presented in this paper, all objects of $\m$ will be cofibrant, and so we shall take $Q = \id_{\m}$.
\end{remark}

We can transfer localisations of model structures along Quillen adjunctions.

\begin{proposition} \label{local fibrant objects and localisation}
Let $F: \m \rightleftarrows \n: G$ be a Quillen adjunction between model categories $\m$ and $\n$. Let $S$ be a class of morphisms of $\m$. A fibrant object $Y$ of $\n$ is $\L F(S)$-local if and only if $G(Y)$ is $S$-local. 
\end{proposition}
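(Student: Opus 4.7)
The plan is to transport the locality condition across the Quillen adjunction via the derived mapping-space isomorphism. For any Quillen adjunction $F: \m \rightleftarrows \n: G$ there is a natural isomorphism
$$\underline{\Ho \n}(\L F(X), Y) \cong \underline{\Ho \m}(X, \R G(Y))$$
in $\Ho(\widehat{\Delta})$, valid for all $X \in \m$ and $Y \in \n$ (this is the standard derived adjunction for homotopy mapping spaces, cf. \cite[A.2]{ara2014higher}). This will be the one substantive input; the rest of the argument is a direct manipulation of definitions.

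First I would unwind the two locality conditions side by side. Given $f: A \to B$ in $S$, the object $Y$ is $\L F(S)$-local iff the morphism
$$\underline{\Ho \n}(\L F(f), Y): \underline{\Ho \n}(\L F(B), Y) \to \underline{\Ho \n}(\L F(A), Y)$$
is an isomorphism in $\Ho(\widehat{\Delta})$, and $G(Y)$ is $S$-local iff
$$\underline{\Ho \m}(f, G(Y)): \underline{\Ho \m}(B, G(Y)) \to \underline{\Ho \m}(A, G(Y))$$
is an isomorphism. Applying the derived adjunction isomorphism naturally in the first variable turns the first display into
$$\underline{\Ho \m}(f, \R G(Y)): \underline{\Ho \m}(B, \R G(Y)) \to \underline{\Ho \m}(A, \R G(Y)),$$
so it suffices to identify $\R G(Y)$ with $G(Y)$ in $\Ho(\m)$. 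But $Y$ is assumed fibrant in $\n$, and $G$ is a right Quillen functor, so $G(Y)$ is already fibrant in $\m$ and represents $\R G(Y)$: any fibrant replacement $Y \to Y'$ is a weak equivalence between fibrant objects, hence mapped by $G$ to a weak equivalence $G(Y) \to G(Y')$ by Ken Brown's lemma. Consequently the two homotopy mapping space functors $\underline{\Ho \m}(-, G(Y))$ and $\underline{\Ho \m}(-, \R G(Y))$ are naturally isomorphic.

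Combining these two observations, the two conditions are equivalent morphism by morphism, hence equivalent as locality conditions, which is what we wanted. The main (and only) obstacle is to invoke the derived adjunction for homotopy mapping spaces correctly; once this tool is in hand, the statement is essentially a formal consequence of the definitions given in \ref{homotopy mapping space and local objects}.
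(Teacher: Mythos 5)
Your argument is correct: the paper gives no proof of its own here, simply citing \cite[Proposition 3.1.12]{hirschhorn2003model}, and your derivation---transporting the locality condition through the derived mapping-space adjunction $\underline{\Ho \n}(\L F(X), Y) \cong \underline{\Ho \m}(X, \R G(Y))$ and identifying $\R G(Y)$ with $G(Y)$ via Ken Brown's lemma, using that $Y$ is fibrant---is essentially the standard proof of that cited result. Nothing further is needed.
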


\begin{proof}
See \cite[Proposition 3.1.12]{hirschhorn2003model}.
\end{proof}

\begin{theorem} \label{transfer of localisation}
Let $F: \m \rightleftarrows \n: G$ be a Quillen adjunction between model categories $\m$ and $\n$. Let $S$ be a class of morphisms of $\m$. If the localisations $L_S \m$ and $L_{\L F(S)} \n$ exist, then 
$$F: L_S \m \rightleftarrows L_{\L F(S)} \n: G$$
is a Quillen adjunction between the localised model categories. 

Moreover, if $F: \m \rightleftarrows \n: G$ is a Quillen equivalence, then so is $F: L_S \m \rightleftarrows L_{\L F(S)} \n: G$.
\end{theorem}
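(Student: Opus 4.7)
The plan is to verify the two assertions in turn.

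\textbf{(Quillen adjunction.)} Since the cofibrations of a Bousfield localisation coincide with those of the original model structure, $F$ still preserves cofibrations when viewed as a functor $L_S \m \to L_{\L F(S)} \n$. It therefore suffices, by the standard adjunction criterion, to show that $G$ sends local fibrant objects to local fibrant objects and local fibrations between them to local fibrations. If $Y$ is local fibrant in $L_{\L F(S)} \n$, then $Y$ is fibrant in $\n$ and $\L F(S)$-local, so Proposition \ref{local fibrant objects and localisation} gives that $G(Y)$ is $S$-local, while $G$ being right Quillen for the original adjunction gives that $G(Y)$ is fibrant in $\m$, making $G(Y)$ local fibrant in $L_S \m$. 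For a local fibration $p \colon Y \to Y'$ between local fibrant objects, paragraph \ref{bousfield localisation} identifies $p$ with a fibration of $\n$, so $G(p)$ is a fibration in $\m$ between local fibrant objects, and by the same characterisation a local fibration.

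\textbf{(Quillen equivalence.)} For the second assertion I would use the characterisation of a Quillen equivalence in terms of its total derived functors, combined with the standard identification of $\Ho(L_S \m)$ with the reflective subcategory of $\Ho(\m)$ spanned by the $S$-local objects, and analogously for $\n$. Under these identifications, Proposition \ref{local fibrant objects and localisation} says that $\R G$ restricts to a functor $\Ho(L_{\L F(S)} \n) \to \Ho(L_S \m)$ between subcategories of local objects, and since $\L F$ and $\R G$ are inverse equivalences by hypothesis their restrictions are automatically inverse equivalences as well. A direct comparison, using that cofibrant replacements are the same in $\m$ and $L_S \m$ while local fibrant replacements agree with fibrant replacements up to a canonical weak equivalence on local fibrant objects, identifies these restrictions with the total derived functors of the localised Quillen adjunction from the first part. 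Hence the latter is an equivalence of homotopy categories.

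The step I expect to be most delicate is the final identification of derived functors. Local fibrant replacements used to compute $\R G_{\loc}$ in $L_{\L F(S)} \n$ do not in general coincide with fibrant replacements in $\n$, so the raw formulae for $\R G$ and $\R G_{\loc}$ differ; the point is that on local fibrant objects both recipes return the correct value up to canonical weak equivalence, and these weak equivalences assemble into a natural isomorphism of derived functors. This is well-documented (cf.\ \cite[Chapter 3]{hirschhorn2003model}) but requires some bookkeeping.
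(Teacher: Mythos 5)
The paper offers no argument of its own here—its proof is the citation \cite[Proposition 3.3.20]{hirschhorn2003model}—so your write-up is necessarily an independent route; most of it is sound. The first half is correct: cofibrations are unchanged by localisation, $G$ carries fibrant objects of $L_{\L F(S)}\n$ to fibrant objects of $L_S\m$ (Proposition \ref{local fibrant objects and localisation} plus right Quillenness of $G$ for the original pair), and it carries local fibrations between such objects to local fibrations by the characterisation recalled in \S\ref{bousfield localisation}. Be aware, though, that the ``standard adjunction criterion'' you invoke is the refined one (an adjunction is Quillen as soon as the left adjoint preserves cofibrations and the right adjoint preserves fibrations between fibrant objects, due to Dugger and Joyal--Tierney); it is true but not among the results quoted in the paper. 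You could instead stay entirely inside the paper's toolkit: $F: \m \rightleftarrows L_{\L F(S)}\n : G$ is Quillen as the composite of the original pair with the identity adjunction of \S\ref{bousfield localisation}, and Theorem \ref{criterion for quillen adjunction after localisation} then reduces the claim to exactly the fibrancy statement you verified.

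In the second half, the sentence ``since $\L F$ and $\R G$ are inverse equivalences their restrictions are automatically inverse equivalences'' is incorrect as stated: $\L F$ does not restrict to the local subcategories (a left derived functor has no reason to preserve local objects), so there is no restriction of $\L F$ available to serve as an inverse. What you actually need, and what is true, is that the restriction of $\R G$ to the $\L F(S)$-local objects is an equivalence onto the $S$-local objects: full faithfulness is inherited from $\R G$, and essential surjectivity holds because any $S$-local fibrant $X$ is isomorphic in $\Ho(\m)$ to $\R G(Y)$ for some fibrant $Y$, locality is invariant under isomorphism in $\Ho(\m)$, and Proposition \ref{local fibrant objects and localisation} then shows $Y$ is $\L F(S)$-local. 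With that repair, together with the derived-functor bookkeeping you flag yourself (identifying the restricted $\R G$ with the derived right adjoint of the localised pair), your strategy does go through. Note finally that the paper's own Theorem \ref{recognition of equivalence after localisastion} yields the equivalence statement in two lines: taking $\m_{\loc} = L_S\m$ and $\n_{\loc} = L_{\L F(S)}\n$, its hypothesis is literally Proposition \ref{local fibrant objects and localisation}.
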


\begin{proof}
See \cite[Proposition 3.3.20]{hirschhorn2003model}
\end{proof}

The two following theorems (recalled from the appendix of \cite{campbell2020truncated}) present ways to know when Quillen adjunctions and equivalences are preserved after localisation.

\begin{theorem} \label{criterion for quillen adjunction after localisation}
    Let $F: \m \rightleftarrows \n: G$ be a Quillen adjunction between model categories $\m$ and $\n$. Let $\m_{\loc}$ be a localisation of $\m$. The adjunction $F: \m_{\loc} \rightleftarrows \n: G$ is Quillen if and only if for every fibrant every object $Y$ of $\n$, its image $G(Y)$ is fibrant in $\m_{\loc}$.
\end{theorem}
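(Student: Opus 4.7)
The ``only if'' direction is a standard observation: if $F \dashv G$ is Quillen between $\m_{\loc}$ and $\n$, then the right adjoint $G$ preserves fibrations and the terminal object (as a right adjoint, it preserves all limits), so applying $G$ to the fibration $Y \to *$ shows that $G(Y)$ is fibrant in $\m_{\loc}$ whenever $Y$ is fibrant in $\n$.

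For the converse, I assume that $G$ carries fibrant objects of $\n$ to fibrant objects of $\m_{\loc}$. Since $\Cof_{\loc} = \Cof$ by the definition of a Bousfield localisation, and since $F$ is already left Quillen on $\m$, the functor $F$ preserves cofibrations when regarded as a functor $\m_{\loc} \to \n$. It thus suffices to check that $F$ sends every trivial cofibration $j \colon A \to B$ of $\m_{\loc}$ to a weak equivalence of $\n$. By the detection of weak equivalences via homotopy mapping spaces, this amounts to showing that for every fibrant object $Y$ of $\n$, the induced map
$$\underline{\Ho \n}(F(j), Y) \colon \underline{\Ho \n}(F(B), Y) \longrightarrow \underline{\Ho \n}(F(A), Y)$$
is an isomorphism in $\Ho(\widehat{\Delta})$.

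The derived adjunction between $\m$ and $\n$ identifies the above map with $\underline{\Ho \m}(j, G(Y))$. By hypothesis $G(Y)$ is $\m_{\loc}$-fibrant, and since $\m$ and $\m_{\loc}$ share the same cofibrations and the same trivial fibrations, the homotopy mapping spaces $\underline{\Ho \m}(-, G(Y))$ and $\underline{\Ho \m_{\loc}}(-, G(Y))$ agree up to weak equivalence. Since $j$ is a weak equivalence of $\m_{\loc}$ whose target is local fibrant, $\underline{\Ho \m_{\loc}}(j, G(Y))$ is an isomorphism in $\Ho(\widehat{\Delta})$, which concludes the argument. The main subtlety is the identification of the two versions of the homotopy mapping space for a local fibrant target; this is a standard but technical fact about Bousfield localisations (compare Hirschhorn's treatment), which I would cite rather than reprove.
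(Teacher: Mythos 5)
Your overall strategy is sound, and the final steps use exactly the paper's toolkit (the comparison $\underline{\Ho \m}(X, G(Y)) \cong \underline{\Ho \m_{\loc}}(X, G(Y))$ for a local fibrant target is Lemma \ref{lemma successive localisations}), but there is a genuine gap where you invoke the derived adjunction. For $Y$ fibrant in $\n$, the derived adjunction supplies a natural isomorphism $\underline{\Ho \n}(\L F(A), Y) \cong \underline{\Ho \m}(A, G(Y))$, where $\L F(A) = F(QA)$ is computed on a cofibrant replacement of $A$. This identifies $\underline{\Ho \n}(F(A), Y)$ with $\underline{\Ho \m}(A, G(Y))$ only when $A$ is cofibrant (or at least when $F(QA) \to F(A)$ happens to be a weak equivalence); for a general object the underived $F(A)$ need not have the homotopy type of $\L F(A)$. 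Since you quantify over \emph{all} trivial cofibrations $j\colon A \to B$ of $\m_{\loc}$, whose domain and codomain need not be cofibrant in a general model category, the identification of $\underline{\Ho \n}(F(j), Y)$ with $\underline{\Ho \m}(j, G(Y))$ is not justified --- and the detection principle you use at the start concerns the underived objects $F(A)$, $F(B)$, so you cannot simply replace them by their derived versions.

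The repair is standard. Either restrict to trivial cofibrations between cofibrant objects and invoke the recognition lemma that an adjunction whose left adjoint preserves cofibrations and sends trivial cofibrations between cofibrant objects to weak equivalences is a Quillen adjunction (Dugger's lemma); with that reduction your mapping-space argument goes through verbatim. (In this paper's applications all objects are cofibrant, so the issue is invisible there, but the theorem is stated for arbitrary $\m$, $\n$.) Alternatively, one can avoid homotopy function complexes altogether, which is the route of the cited reference \cite{campbell2020truncated}: if $q$ is a fibration between fibrant objects of $\n$, then $G(q)$ is a fibration of $\m$ between objects that are fibrant in $\m_{\loc}$ by hypothesis, hence a fibration of $\m_{\loc}$ (see the remark at the end of \S\ref{bousfield localisation}); one then concludes with the criterion that an adjunction is Quillen as soon as the left adjoint preserves cofibrations and the right adjoint preserves fibrations between fibrant objects.
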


\begin{proof}
    See \cite[Proposition A.13]{campbell2020truncated}.
\end{proof}

\begin{theorem} \label{recognition of equivalence after localisastion}
    Let $F: \m \rightleftarrows \n: G$ be a Quillen equivalence between model categories $\m$ and $\n$. Let $\m_{\loc}$ and $\n_{\loc}$ be localisations of these model structures. We have a Quillen equivalence $F: \m_{\loc} \rightleftarrows \n_{\loc}: G$ if and only if a fibrant object $Y$ of $\n$ is fibrant in $\n_{\loc}$ precisely when $G(Y)$ is fibrant in $\m_{\loc}$.
\end{theorem}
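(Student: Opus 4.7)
My plan is to prove each direction in turn, building on Theorem \ref{criterion for quillen adjunction after localisation} for the Quillen adjunction aspect and then using the original Quillen equivalence to handle the weak-equivalence side.

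For the $(\Rightarrow)$ direction, suppose $F \dashv G$ is a Quillen equivalence between $\m_{\loc}$ and $\n_{\loc}$. If $Y$ is $\n_{\loc}$-fibrant, then $G(Y)$ is $\m_{\loc}$-fibrant since $G$ is right Quillen for the localised structures. For the harder converse, suppose $Y$ is $\n$-fibrant and $G(Y)$ is $\m_{\loc}$-fibrant. I would choose a fibrant replacement $Y \to Y'$ in $\n_{\loc}$. It is a standard fact about Bousfield localisations that $\m_{\loc}$-fibrancy is invariant under $\m$-weak equivalences between $\m$-fibrant objects (dually for $\n$), so it suffices to show that $Y \to Y'$ is a weak equivalence in $\n$. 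Since $Y$ and $Y'$ are both $\n$-fibrant, the original Quillen equivalence reduces this to showing that $G(Y) \to G(Y')$ is an $\m$-weak equivalence. Both $G(Y)$ and $G(Y')$ are $\m_{\loc}$-fibrant (the second because $G$ is right Quillen for the localised structures), and the map $G(Y) \to G(Y')$ is a $\m_{\loc}$-weak equivalence, which I would justify by applying the derived adjunction of the localised Quillen equivalence to $Y \to Y'$ (which is itself a $\n_{\loc}$-we). Between $\m_{\loc}$-fibrant objects a $\m_{\loc}$-we is automatically a $\m$-we, closing the argument.

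For the $(\Leftarrow)$ direction, assume the stated biconditional. By Theorem \ref{criterion for quillen adjunction after localisation}, $F \dashv G$ is already a Quillen adjunction on the localisations. To upgrade this to a Quillen equivalence I would use the derived unit criterion: for any cofibrant object $X \in \m$, the derived unit computed in $\m_{\loc}$ must be a $\m_{\loc}$-we. Factoring the fibrant replacement $F(X) \to F(X)^{\n_{\loc}\text{-fib}}$ through a $\n$-fibrant replacement $F(X) \to F(X)^{\n\text{-fib}}$, the original Quillen equivalence provides the $\m$-we $X \to G(F(X)^{\n\text{-fib}})$. The biconditional is then used to show that the remaining piece $G(F(X)^{\n\text{-fib}}) \to G(F(X)^{\n_{\loc}\text{-fib}})$ is a $\m_{\loc}$-we by identifying both targets of $G$ as $\m_{\loc}$-fibrant and using that the localised right adjoint preserves weak equivalences between $\n_{\loc}$-fibrant objects (Ken Brown).

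The main obstacle will be in this last step of the $(\Leftarrow)$ direction: carefully managing the interaction between the two layers of fibrant replacement (in $\n$ and in $\n_{\loc}$) and making sure that the application of Ken Brown's lemma on the localised structure is legitimate. The rest of the argument is essentially a routine unpacking of the characterisations of fibrant objects and weak equivalences in a left Bousfield localisation and an application of the original Quillen equivalence.
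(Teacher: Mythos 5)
The paper itself does not prove this statement (it simply cites Campbell--Lanari, Theorem A.15), so your argument has to stand alone, and as written it does not close. The serious gap is at the crux of your $(\Leftarrow)$ direction: you claim that $G(F(X)^{\n\text{-fib}}) \to G(F(X)^{\n_{\loc}\text{-fib}})$ is an $\m_{\loc}$-weak equivalence ``by identifying both targets of $G$ as $\m_{\loc}$-fibrant'' and invoking Ken Brown for the localised adjunction. But Ken Brown needs the source $F(X)^{\n\text{-fib}}$ to be $\n_{\loc}$-fibrant, and the assertion that $G(F(X)^{\n\text{-fib}})$ is $\m_{\loc}$-fibrant is unjustified and generally false: by the very biconditional you are assuming, it would force $F(X)^{\n\text{-fib}}$ to be $\n_{\loc}$-fibrant, which already fails for $F = G = \id$ with a nontrivial localisation. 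So the hypothesis ``$G(Y)$ $\m_{\loc}$-fibrant $\Rightarrow$ $Y$ $\n_{\loc}$-fibrant'' never actually enters your argument in a usable way. A working proof uses it through an essential-surjectivity type argument: identify $\Ho(\m_{\loc})$ (resp.\ $\Ho(\n_{\loc})$) with the full subcategory of $\Ho(\m)$ (resp.\ $\Ho(\n)$) on the locally fibrant objects (Lemma \ref{lemma successive localisations}); the localised derived right adjoint is then the restriction of the original $\R G$, which is fully faithful since $\R G$ is an equivalence and, by one half of the biconditional, respects these subcategories; essential surjectivity is where the other half is used -- any $\m_{\loc}$-fibrant $W$ is $\m$-equivalent to $G(Y)$ for some $\n$-fibrant $Y$, local fibrancy is invariant under $\m$-equivalences between $\m$-fibrant objects (a retract argument), so $G(Y)$ is $\m_{\loc}$-fibrant and hence $Y$ is $\n_{\loc}$-fibrant. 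Note also that the derived-unit condition alone is not a complete criterion for a Quillen equivalence; you must also handle the derived counit (here essentially free, since at an $\n_{\loc}$-fibrant $Y$ it coincides with the original derived counit, an $\n$-we) or reflection of weak equivalences.

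Your $(\Rightarrow)$ direction has the right skeleton, but the pivotal step -- ``$G(Y) \to G(Y')$ is an $\m_{\loc}$-we by applying the derived adjunction of the localised equivalence to $Y \to Y'$'' -- is circular as stated: $G(Y)$ computes the localised $\R G$ at $Y$ only when $Y$ is $\n_{\loc}$-fibrant, which is precisely what you are trying to prove. The fix is the adjunct trick: the composite $FQG(Y) \to FG(Y) \to Y \to Y'$ is an $\n_{\loc}$-we from a cofibrant object to an $\n_{\loc}$-fibrant one (the first part is the original derived counit, an $\n$-we since $Y$ is $\n$-fibrant), so by the assumed localised Quillen equivalence its adjunct $QG(Y) \to G(Y) \to G(Y')$ is an $\m_{\loc}$-we, and 2-out-of-3 against the cofibrant replacement map gives that $G(Y) \to G(Y')$ is an $\m_{\loc}$-we; the rest of your reduction (local we between local fibrant objects is an $\m$-we, $G$ reflects $\n$-we's between fibrant objects, local fibrancy invariant under $\n$-we's between fibrant objects) is fine.
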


\begin{proof}
    See \cite[Theorem A.15]{campbell2020truncated}.
\end{proof}

Next, we state a proposition which will allow us to understand successive localisations of a model category.

\begin{lemma} \label{lemma successive localisations}
Let $\m$ be a model category and $\m_{\loc}$ be a Bousfield localisation of $\m$. For every object $X$ and every local fibrant object $Y$ of $\m$, the homotopy mapping spaces $\underline{\Ho \m}(X, Y)$ and $\underline{\Ho \m_{\loc}}(X, Y)$ are naturally isomorphic in $\Ho(\widehat{\Delta})$.
\end{lemma}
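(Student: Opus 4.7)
The plan is to exhibit a single simplicial set representing both $\underline{\Ho \m}(X, Y)$ and $\underline{\Ho \m_{\loc}}(X, Y)$ and observe that the identification is natural. Recall from \cite[Chapters 16--17]{hirschhorn2003model} that for any model category $\m$, any object $X$ admits a (Reedy cofibrant) cosimplicial resolution $\tilde{X}_\bullet \to c^*X$, and that for every fibrant object $Y$, the homotopy mapping space $\underline{\Ho \m}(X, Y)$ is represented in $\Ho(\widehat{\Delta})$ by the simplicial set $[n] \mapsto \Hom_{\m}(\tilde{X}_n, Y)$. One should first check that this concrete model agrees with the abstract definition of $\underline{\Ho \m}$ recalled in \ref{homotopy mapping space and local objects}; this is standard and is the technical backbone of the argument.

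The key observation is that cosimplicial resolutions are preserved when passing to a Bousfield localisation. Reedy cofibrations of cosimplicial objects are determined entirely by the cofibrations of $\m$, and since $\Cof_{\loc} = \Cof$, the Reedy model structures on cosimplicial objects of $\m$ and of $\m_{\loc}$ have the same cofibrations; in particular $\tilde{X}_\bullet$ is Reedy cofibrant for both. Reedy weak equivalences are levelwise weak equivalences, and since $\W \subset \W_{\loc}$, the augmentation $\tilde{X}_\bullet \to c^*X$ is a Reedy weak equivalence with respect to either model structure. Hence the same $\tilde{X}_\bullet \to c^*X$ serves as a cosimplicial resolution of $X$ in $\m$ and in $\m_{\loc}$.

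On the target side, $\m_{\loc}$ being a Bousfield localisation of $\m$ forces every local fibrant object to be fibrant in $\m$ (as recalled in \ref{bousfield localisation}), so $Y$ is fibrant with respect to both model structures. Putting these two observations together, the simplicial set $[n] \mapsto \Hom_{\m}(\tilde{X}_n, Y)$ represents both $\underline{\Ho \m}(X, Y)$ and $\underline{\Ho \m_{\loc}}(X, Y)$, producing the desired isomorphism in $\Ho(\widehat{\Delta})$. Taking a functorial cosimplicial resolution makes this identification natural in $X$, and naturality in $Y$ is immediate from the formula $\Hom_{\m}(\tilde{X}_n, -)$.

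The main obstacle is simply importing the comparison between the abstract homotopy mapping space of \ref{homotopy mapping space and local objects} and the explicit model via cosimplicial resolutions; once that tool is in hand, the argument is essentially formal, resting on the two set-theoretic inclusions $\Cof_{\loc} = \Cof$ and $\W \subset \W_{\loc}$ that define a left Bousfield localisation.
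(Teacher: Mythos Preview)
Your proof is correct. The paper itself does not prove this lemma; it simply cites \cite[Lemma A.4]{ara2014higher}. Your argument via cosimplicial resolutions is the standard one and is essentially what underlies that reference: since $\Cof_{\loc} = \Cof$ and $\W \subset \W_{\loc}$, any cosimplicial resolution in $\m$ is also one in $\m_{\loc}$, and since a local fibrant $Y$ is fibrant in $\m$, the same simplicial set $\Hom_{\m}(\tilde{X}_\bullet, Y)$ computes both mapping spaces. There is nothing to add.
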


\begin{proof}
See \cite[Lemma A.4]{ara2014higher}
\end{proof}

\begin{proposition} \label{proposition succesive localisations}
Let $\m$ be a model category and $S, T$ be two classes of morphisms of $\m$. Suppose that the localisations $L_S \m$, $L_T \m$, $L_T L_S \m$, $L_S L_T \m$ and $L_{S \cup T} \m$ exist. Then the model categories $L_T L_S \m$, $L_S L_T \m$ and $L_{S \cup T} \m$ are the same.
\end{proposition}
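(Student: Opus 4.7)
The plan is to show that all three model structures have the same class of fibrant objects, and then invoke the general fact that a Bousfield localisation of $\m$ is determined by its fibrant objects (since the cofibrations are fixed, recall \ref{bousfield localisation}).

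First I would identify the fibrant objects of each of the three model categories. By definition, the fibrant objects of $L_{S \cup T} \m$ are the fibrant objects of $\m$ that are $(S \cup T)$-local, and this clearly coincides with being both $S$-local and $T$-local (both in $\m$), since the condition \ref{homotopy mapping space and local objects} is imposed morphism by morphism.

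Next I would analyse $L_T L_S \m$. An object $Y$ is fibrant in $L_T L_S \m$ precisely when it is fibrant in $L_S \m$ (hence fibrant in $\m$ and $S$-local) and $T$-local viewed as an object of $L_S \m$, meaning that $\underline{\Ho L_S \m}(f, Y)$ is an isomorphism in $\Ho(\widehat{\Delta})$ for every $f \in T$. The key step is to invoke Lemma \ref{lemma successive localisations}: since $Y$ is local fibrant in $L_S \m$, there is a natural isomorphism $\underline{\Ho L_S \m}(X, Y) \cong \underline{\Ho \m}(X, Y)$ for every object $X$, so $T$-locality of $Y$ in $L_S \m$ is equivalent to $T$-locality of $Y$ in $\m$. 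Hence the fibrant objects of $L_T L_S \m$ are exactly the fibrant objects of $\m$ which are both $S$-local and $T$-local. The same argument, with the roles of $S$ and $T$ swapped, identifies the fibrant objects of $L_S L_T \m$.

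Finally, I would conclude by noting that the three model structures are Bousfield localisations of $\m$ sharing the same cofibrations and the same fibrant objects; it follows that their trivial fibrations agree (they are determined by right lifting against cofibrations), and hence that their fibrations agree (a map into a fibrant object is a fibration iff it has RLP against the trivial cofibrations, which in each case are the cofibrations with the local right lifting property against fibrant objects). Equivalently, one can observe that in any model structure the weak equivalences are detected on local fibrant replacements via the homotopy mapping space functor, so equal fibrant objects force equal weak equivalences. The only delicate point is this last step: I expect justifying that "same cofibrations + same fibrant objects = same model structure" is the part that needs to be made precise, but for Bousfield localisations it is standard (see \cite{hirschhorn2003model}) and no further machinery is required.
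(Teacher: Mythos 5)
Your proposal is correct and follows essentially the same route as the paper: it reduces the statement to identifying the fibrant objects, uses Lemma \ref{lemma successive localisations} to show that $T$-locality over $L_S\m$ agrees with $T$-locality over $\m$ (and symmetrically), and concludes via the fact that a model structure with fixed cofibrations is determined by its fibrant objects. The paper justifies that last determination step by citing \cite[Proposition E.1.10]{joyal2008thetheory} rather than re-proving it, which is the only difference from your sketch.
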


\begin{proof}
Since a model structure is completely determined by its cofibrations and fibrant objects (cf. \cite[Proposition E.1.10]{joyal2008thetheory}) and the 3 considered model structures have the same cofibrations, it is sufficient to show that they have the same fibrant objects. Let $X$ be an object of $\m$. We claim that the following assertions are equivalent:
\begin{enumerate}
    \item $X$ is a $T$-local object of $L_S \m$
    \item $X$ is an $S$-local object of $L_T \m$
    \item $X$ is an $(S \cup T)$-local objects of $\m$
\end{enumerate}

We will show that $(1) \Leftrightarrow (3)$. The equivalence $(2) \Leftrightarrow (3)$ follows by exchanging the roles of $S$ and $T$.

$(1) \Rightarrow (3)$ Suppose that $X$ is a $T$-local object of $L_S \m$. We have to show that, for every $f \in S \cup T$, $f: A \to B$, the map
$$\underline{\Ho \m}(f, X): \underline{\Ho \m}(B, X) \to \underline{\Ho \m}(A, X)$$
is an isomorphism. This is true if $f \in S$, since $X$ is fibrant in $L_T L_S \m$, so it is in particular fibrant in $L_S \m$, which means it is $S$-local in $\m$. If $f \in T$, we consider the commutative square
\[\begin{tikzcd}
	{\underline{\text{Ho} \mathcal{M}}(B, X)} && {\underline{\text{Ho} \mathcal{M}}(A, X)} \\
	{\underline{\text{Ho} L_S\mathcal{M}}(B, X)} && {\underline{\text{Ho} L_S\mathcal{M}}(A, X)}
	\arrow["{\underline{\text{Ho} \mathcal{M}}(f, X)}", from=1-1, to=1-3]
	\arrow["{\underline{\text{Ho} L_S\mathcal{M}}(f, X)}", from=2-1, to=2-3]
	\arrow["\cong"', from=1-1, to=2-1]
	\arrow["\cong", from=1-3, to=2-3]
\end{tikzcd}\]
where the isomorphisms are given by Lemma \ref{lemma successive localisations}. The bottom arrow is an isomorphism, since $X$ is $T$-local in $L_S \m$ by assumption, and thus the top arrow is also an isomorphism, as desired.

$(3) \Rightarrow (1)$ Let $X$ be a $(S \cup T)$-local object of $\m$. Let $f: A \to B$ be a morphism in the class $T$. We have to show that the map
$$\underline{\Ho L_S \m}(f, X): \underline{\Ho L_S \m}(B, X) \to \underline{\Ho L_S \m}(A, X)$$
is an isomorphism. Since $X$ is $(S \cup T)$-local in $\m$, it is in particular $S$-local in $\m$, and so it is a fibrant object of $L_S \m$. Therefore, we can use Lemma \ref{lemma successive localisations} to consider a commutative square as above. The top arrow is an isomorphism since $X$ is $T$-local in $\m$, which implies that the bottom arrow is also an isomorphism.
\end{proof}

We end this section recalling some results about model category structures on slice categories.

\begin{parag} \label{model structure for slice categories}
If $\m$ is a category and $C$ is an object of $\m$, we denote by $C / \m$ the \textit{slice category} of objects under $C$. The objects of $C / \m$ are pairs $(A, a)$, where $A$ is an object of $\m$ and $a: C \to A$ is a morphism of $\m$. A morphism $f: (A, a) \to (B, b)$ in $C / \m$ is a morphism $f: A \to B$ of $\m$ such that $f a = b$. There is an obvious forgetful functor $C / \m \to \m$ taking $(A, a)$ to $A$.

When $\m$ is endowed with a model category structure, there is an induced model structure on $C /\m$ where a morphism is a weak equivalence (resp. cofibration, resp. fibration) when its image by the forgetful functor is a weak equivalence (resp. cofibration, resp. fibration) of $\m$ \cite[Theorem 7.6.5]{hirschhorn2003model}. 
\end{parag}

\begin{lemma} \label{lemma local objects slice category}
Let $\m$ be a model category, $C$ be a cofibrant object of $\m$ and $f: (A, a) \to (B, b)$ be a morphism in $C / \m$. A fibrant object $X$ of $\m$ is local with respect to $f$ in $\m$ if and only if for every $x: C \to X$, the object $(X, x)$ of $C / \m $ is local with respect to $f$ in $C / \m$.
\end{lemma}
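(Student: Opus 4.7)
The proof rests on the standard description of the derived mapping space in a slice model category: for a cofibrant object $(A, a)$ and a fibrant object $(X, x)$ in $C / \m$, there is a natural homotopy fiber sequence
\[
\underline{\Ho (C / \m)}((A, a), (X, x)) \to \underline{\Ho \m}(A, X) \xrightarrow{a^*} \underline{\Ho \m}(C, X)
\]
in $\Ho(\widehat{\Delta})$, with homotopy fiber taken over the vertex represented by $x: C \to X$. The hypothesis that $C$ is cofibrant in $\m$ is crucial here, because it ensures that a cofibrant replacement of $(A, a)$ in $C / \m$ forgets to a cofibrant replacement of $A$ in $\m$, so that the three derived mapping spaces above are computed compatibly.

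\textbf{Main argument.} Applying this to both $(A, a)$ and $(B, b)$, the morphism $f$ yields a commutative diagram
\[
\begin{tikzcd}
\underline{\Ho (C / \m)}((B, b), (X, x)) \ar[r] \ar[d] & \underline{\Ho \m}(B, X) \ar[r, "b^*"] \ar[d, "f^*"] & \underline{\Ho \m}(C, X) \ar[d, equal] \\
\underline{\Ho (C / \m)}((A, a), (X, x)) \ar[r] & \underline{\Ho \m}(A, X) \ar[r, "a^*"] & \underline{\Ho \m}(C, X)
\end{tikzcd}
\]
whose rows are homotopy fiber sequences over $x$ and whose right-hand square commutes because $b = f \circ a$. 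For the direction $(\Rightarrow)$, if $X$ is $f$-local in $\m$ then the middle vertical $f^*$ is an isomorphism in $\Ho(\widehat{\Delta})$, so the induced map on homotopy fibers over any vertex $x$ --- namely the left vertical --- is also one. For the converse $(\Leftarrow)$, suppose the left vertical is an isomorphism for every $x: C \to X$. We invoke the fact that a map of Kan complexes over a common base is a weak equivalence if and only if it induces weak equivalences on all homotopy fibers, together with the observation that every connected component of $\underline{\Ho \m}(C, X)$ is represented by an actual morphism $x: C \to X$ (these are exactly the $0$-simplices of the derived mapping space when $C$ is cofibrant and $X$ fibrant). This forces $f^*$ itself to be an isomorphism, which is to say $X$ is $f$-local in $\m$.

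\textbf{Main obstacle.} The only real content is establishing the homotopy fiber sequence above, which is where the cofibrancy hypothesis on $C$ is used. The point is to replace $a: C \to A$ by a cofibration between cofibrants in $\m$, so that after applying a cosimplicial framing $a^*$ becomes a Kan fibration whose strict fiber over $x$ is, by construction, the mapping space in $C / \m$. Once this is in place, both implications reduce to routine manipulations of fiber sequences and the long exact sequence in homotopy.
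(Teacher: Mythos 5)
Your argument is correct: the identification of $\underline{\Ho (C/\m)}((A,a),(X,x))$ with the homotopy fibre of $a^*\colon \underline{\Ho \m}(A,X)\to\underline{\Ho \m}(C,X)$ over $x$ (valid because $C$ is cofibrant, so a cofibrant replacement of $(A,a)$ in $C/\m$ forgets to one of $A$ in $\m$), together with the fibrewise criterion and the fact that every vertex of the derived mapping space $\underline{\Ho\m}(C,X)$ is an honest map $x\colon C\to X$ when $C$ is cofibrant and $X$ fibrant, is exactly the point. The paper itself gives no written proof --- it cites this as an exercise in Campbell's \S 8.1 --- and your fibre-sequence argument is the standard one that exercise intends, so there is nothing further to compare.
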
 

\begin{proof}
    Exercise in \cite[§8.1]{campbell2020homotopy}.
\end{proof}


\section{Ara's $n$-quasi-categories and Rezk's $(\infty, n)$-$\Theta$-spaces}


Let $n \geq 0$. We recall the recursive definition of the category $\Theta_n$, using the wreath product introduced in \cite{berger2007iterated}. 

\begin{parag} \label{parag wreath product}
Let $\cc$ be a category. The \textit{wreath product} $\Delta \wr \cc$ is the category described as follows. The objects are lists $[p](x_1, \ldots, x_p)$, where $p \geq 0$ and $x_1, \ldots, x_p$ are objects of $\cc$. A morphism $[f](\alpha): [p](x_1, \ldots, x_p) \to [q](y_1, \ldots, y_q)$ is the data of a morphism $f: [p] \to [q]$ in $\Delta$ and of morphisms $\alpha^{i}_j: x_i \to y_j$ in $\cc$ for every $i, j$ such that $f(i-1) < j \leq f(i)$.
\end{parag}

\begin{parag} \label{parag special functors in the theta construction}    
There is a \textit{suspension} functor $\sigma: \cc \to \Delta \wr \cc$, which sends an object $x$ to  $[1](x)$ and a morphism $f: x \to y$ to $[\id_1](f)$. If the category $\cc$ has a terminal object $t$, there is an \textit{inclusion} functor $i: \Delta \to \Delta \wr \cc$, defined on objects by $[p] \mapsto [p](t, \ldots, t)$. The inclusion functor $i$ has a left adjoint, given by the \textit{truncation} $\pi: \Delta \wr \cc \to \Delta$ sending $[p](x_1, \ldots, x_p)$ to $[p]$.
\end{parag}

\begin{parag} \label{parag def thetan}
    Let $\Theta_0$ be the terminal category. We recursively define $\Theta_n$ for $n > 0$ by letting 
    $$\Theta_n := \Delta \wr \Theta_{n-1}$$ 
    We note that $\Theta_1$ is exactly $\Delta$. Moreover, by the previous paragraph we have a suspension
    $$\sigma: \Theta_n \to \Theta_{n+1}$$
    and an inclusion (since $\Theta_{n-1}$ has a terminal object $[0]$)
    $$i: \Delta \to \Theta_n$$
    for every $n \geq 0$.
\end{parag}

\begin{parag} \label{parag inclusion thetan in ncat}
    For every $n \geq 0 $, there is fully faithful inclusion $\Theta_n \to \ncat$. Conceptually, we can see $\Theta_0$ as the full subcategory of $0\mhyphen\Cat := \Set$ formed the singleton. Reasoning inductively and using the fact that the wreath product preserves fully faithfulness, we can see $\Theta_n = \Delta \wr \Theta_{n-1}$ as a full subcategory of $\Delta \wr (n-1)\mhyphen \Cat$, which is a full subcategory of $\ncat$. From now on, we may identify $\Theta_n$ with a full subcategory of $\ncat$.

    More explicitly, we picture an object $[p](\theta_1, \ldots, \theta_p)$ of $\Theta_n$ as the $n$-category freely generated by the ($(n-1)\mhyphen \Cat$)-graph with objects $0, 1, \ldots p$ and hom-$(n-1)$-categories $\Hom(i-1, i) = \theta_i$ for $1 \leq i \leq p$.

    If $\cc$ is an $n$-category, the \textit{suspension} $\sigma(\cc)$ is the $(n+1)$-category with two objects $0$ and $1$, and with hom-$n$-categories given by $\Hom(0,1) = \cc$, $\Hom(1,0) = \emptyset$, $\Hom(0,0) = \Hom(1,1) = \{*\}$. The restriction of the suspension functor $\sigma: \ncat \to (n+1)\mhyphen\Cat$ to $\Theta_n$ factors through $\Theta_{n+1}$ and is exactly the suspension defined in §\ref{parag def thetan}.
\end{parag}

\begin{example} \label{example objects of theta n}
The object $\theta = [3](2, 0, 1)$ of $\Theta_2$ is the free 2-category generated by the 2-graph
\[\begin{tikzcd}
	0 & 1 & 2 & 3
	\arrow[""{name=0, anchor=center, inner sep=0}, curve={height=-24pt}, from=1-1, to=1-2]
	\arrow[""{name=1, anchor=center, inner sep=0}, curve={height=24pt}, from=1-1, to=1-2]
	\arrow[""{name=2, anchor=center, inner sep=0}, from=1-1, to=1-2]
	\arrow[from=1-2, to=1-3]
	\arrow[""{name=3, anchor=center, inner sep=0}, curve={height=-12pt}, from=1-3, to=1-4]
	\arrow[""{name=4, anchor=center, inner sep=0}, curve={height=12pt}, from=1-3, to=1-4]
	\arrow[shorten <=3pt, shorten >=3pt, Rightarrow, from=0, to=2]
	\arrow[shorten <=3pt, shorten >=3pt, Rightarrow, from=2, to=1]
	\arrow[shorten <=3pt, shorten >=3pt, Rightarrow, from=3, to=4]
\end{tikzcd}\]

Its suspension $\sigma(\theta) \in \Theta_3$ is the free 3-category generated by the 3-graph (where the small horizontal arrows should be triple)
\[\begin{tikzcd}
	0 && 1
	\arrow[""{name=0, anchor=center, inner sep=0}, curve={height=-30pt}, from=1-1, to=1-3]
	\arrow[""{name=1, anchor=center, inner sep=0}, curve={height=-12pt}, from=1-1, to=1-3]
	\arrow[""{name=2, anchor=center, inner sep=0}, curve={height=12pt}, from=1-1, to=1-3]
	\arrow[""{name=3, anchor=center, inner sep=0}, curve={height=30pt}, from=1-1, to=1-3]
	\arrow[""{name=4, anchor=center, inner sep=0}, shift right=3, curve={height=6pt}, shorten <=3pt, shorten >=3pt, Rightarrow, from=0, to=1]
	\arrow[""{name=5, anchor=center, inner sep=0}, shift left=3, curve={height=-6pt}, shorten <=3pt, shorten >=3pt, Rightarrow, from=0, to=1]
	\arrow[""{name=6, anchor=center, inner sep=0}, shorten <=2pt, shorten >=2pt, Rightarrow, from=0, to=1]
	\arrow[""{name=7, anchor=center, inner sep=0}, shift right=2, curve={height=6pt}, shorten <=3pt, shorten >=3pt, Rightarrow, from=2, to=3]
	\arrow[shorten <=3pt, shorten >=3pt, Rightarrow, from=1, to=2]
	\arrow[""{name=8, anchor=center, inner sep=0}, shift left=2, curve={height=-6pt}, shorten <=3pt, shorten >=3pt, Rightarrow, from=2, to=3]
	\arrow[shorten <=2pt, shorten >=2pt, from=4, to=6]
	\arrow[shorten <=2pt, shorten >=2pt, from=6, to=5]
	\arrow[shorten <=4pt, shorten >=4pt, from=7, to=8]
\end{tikzcd}\]
\end{example}

\begin{parag} \label{parag globes}
    Of special interest among the objects of $\Theta_n$ are the \textit{globes} $D_0, \ldots, D_n$. For $n = 0$, the globe $D_0$ is the only object of $\Theta_0$. For $n > 0$, we have $D_0 = [0]$ and $D_k = \sigma(D_{k-1})$ for $1 \leq k \leq n$. 

    For $1 \leq k \leq n$, there are two morphisms $s,t: D_{k-1} \to D_k$ in $\Theta_n$ corresponding to sending the non-identity $(k-1)$-cell of $D_{k-1}$ to the source and target of the non-identity $k$-cell of $D_{k}$, respectively.

    On the other hand, the unique morphism $\tau^1_1: [1] \to [0]$ of $\Delta = \Theta_1$ induces by suspensions morphisms $\tau^n_k: D_k \to D_{k-1}$ in $\Theta_n$ for $2 \leq k \leq n$. The unique morphism $D_1 \to D_0$ in $\Theta_n$ is denoted by $\tau^n_1$. In practice, the $n$-functor $\tau^n_k: D_k \to D_{k-1}$ sends the two non-identity $(k-1)$-cells of $D_k$ to the only non-identity $(k-1)$-cell of $D_{k-1}$.
\end{parag}

\begin{example} \label{example globes}
    The globes $D_0, \ldots, D_3$ of $\Theta_3$ are pictured below
\[\begin{tikzcd}
	{D_0} & {D_1} && {D_2} && {D_3} \\
	0 & 0 & 1 & 0 & 1 & 0 & 1
	\arrow[from=2-2, to=2-3]
	\arrow[""{name=0, anchor=center, inner sep=0}, curve={height=-12pt}, from=2-4, to=2-5]
	\arrow[""{name=1, anchor=center, inner sep=0}, curve={height=12pt}, from=2-4, to=2-5]
	\arrow[""{name=2, anchor=center, inner sep=0}, curve={height=-12pt}, from=2-6, to=2-7]
	\arrow[""{name=3, anchor=center, inner sep=0}, curve={height=12pt}, from=2-6, to=2-7]
	\arrow[shorten <=3pt, shorten >=3pt, Rightarrow, from=0, to=1]
	\arrow[""{name=4, anchor=center, inner sep=0}, curve={height=6pt}, shorten <=4pt, shorten >=4pt, Rightarrow, from=2, to=3]
	\arrow[""{name=5, anchor=center, inner sep=0}, curve={height=-6pt}, shorten <=4pt, shorten >=4pt, Rightarrow, from=2, to=3]
	\arrow[shorten <=2pt, shorten >=2pt, from=4, to=5]
\end{tikzcd}\]
\end{example}


\begin{parag} \label{parag theta_n sets}
    We denote by $\tset$ the category of presheaves on $\Theta_n$, i.e., of functors $\Theta_n^{\op} \to \Set$ and natural transformations between them. The representable presheaves are denoted by $\Theta_n[\theta]$, and by definition we have 
    $$\Theta_n[\theta]_{\theta'} = \Hom_{\Theta_n}(\theta', \theta)$$
    for every $\theta, \theta' \in \Theta_n$.

    The \textit{boundary} $\partial \Theta_n[\theta]$ is the presheaf generated by the monomorphisms $\theta' \to \theta$ which are not the identity. We denote by $\delta_{\theta}: \partial \Theta_n[\theta] \to \Theta_n[\theta]$ the boundary inclusion monomorphism. 
\end{parag}

\begin{parag} \label{parag nerve}
The inclusion $\Theta_n \to \ncat$ induces a fully faithful (cf. \cite[Theorem 1.12]{berger2002cellular}) nerve functor 
$$N_n: \ncat \to \tset$$
Explicitly, if $\cc$ is an $n$-category and $\theta$ is an object of $\Theta_n$, we have
$$N_n(\cc)_{\theta} = \Hom_{\ncat}(\theta, \cc)$$
\end{parag}

\begin{parag} \label{parag nqcat}
    Ara defines in \cite{ara2014higher} a model category structure $\nqcat$ on the category $\tset$ which provides a model for $(\infty, n)$-categories. The fibrant objects of $\nqcat$ are called \textit{$n$-quasi-categories}.
\end{parag}

We will state a recognition principle for left Quillen functors from $\nqcat$, generalizing the one for the case $n=2$ given by Campbell \cite[Proposition 4.13]{campbell2020homotopy}. To this end, we introduce three classes of morphisms of presheaves on $\Theta_n$. 

\begin{parag} \label{parag spine inclusions}
    The first class, denoted by $\i_n$, is the class of \textit{spine inclusions}. For $n = 1$, $[p] \in \Theta_1 = \Delta$, the spine $I[p]$ is the colimit of the image in $\sset$ of the following diagram $\d([p])$ in $\Delta$
\[\begin{tikzcd}
	{D_1} && \ldots && {D_1} \\
	& {D_0} && {D_0}
	\arrow["t", from=2-2, to=1-1]
	\arrow["s"', from=2-2, to=1-3]
	\arrow["t", from=2-4, to=1-3]
	\arrow["s"', from=2-4, to=1-5]
\end{tikzcd}\]
where there are $p$ copies of $D_1$. There is an obvious inclusions $i_p: I[p] \to \Delta[p]$ induced by the universal property of the colimit.

For $n > 1$, define the spines inductively. Consider an object $\theta = [p](\theta_1, \ldots, \theta_p)$ of $\Theta_n$. The spine $I[\theta_i]$ associated to $\theta_i$ is defined as the colimit of (the image by the Yoneda embedding of) a diagram $\d(\theta_i)$ in $\Theta_{n-1}$. The spine $I[\theta]$ is then the colimit in $\tset$ of the diagram
\[\begin{tikzcd}
	{\sigma(\mathcal{D}(\theta_1))} && \ldots && {\sigma(\mathcal{D}(\theta_p))} \\
	& {D_0} && {D_0}
	\arrow["t", from=2-2, to=1-1]
	\arrow["s"', from=2-2, to=1-3]
	\arrow["t", from=2-4, to=1-3]
	\arrow["s"', from=2-4, to=1-5]
\end{tikzcd}\]
where the target of the $t$'s (resp. $s$'s) are the rightmost (resp. leftmost) globe appearing in $\d(\theta_i)$. Once again, there are inclusions
$$i_{\theta}: I[\theta] \to \Theta_n[\theta]$$

The class $\i_n$ is formed by all the spine inclusions, i.e., 
$$\i_n := \{i_\theta: I[\theta] \to \Theta_n[\theta], \theta \in \Theta_n\}$$
\end{parag}

\begin{parag} \label{parag equivalences}
The second class is the one of \textit{generating equivalences}. Let $J = J_1$ be the free groupoid generated by one arrow $0 \to 1$. We define the $k$-category $J_k$ as the suspension $\sigma(J_{k-1})$ for $k > 1$. The functor $j_1: J \to D_0$ induces functors
$$j_k : J_k \to D_{k-1}$$ for every $k \geq 1$. For $n \geq 1$, the class $\j_n$ is given by
$$\j_n := \{N_n(j_k): N_n(J_k) \to N_n(D_{k-1}), 1 \leq k \leq n \}$$
\end{parag}

\begin{parag} \label{parag projections}
    The third class is given by the following projections:
    $$\p_n := \{p_2: N_n(J) \times \Theta_n[\theta] \to \Theta_n[\theta], \theta \in \Theta_n\}$$
\end{parag}

\begin{proposition} \label{recognition principle for left quillen functors}
Let $\m$ be a model category. Let $F: \tset \to \m$ be a cocontinuous functors sending monomorphisms to cofibrations. Then $F$ sends weak equivalences of $\nqcat$ to weak equivalences of $\m$ if and only if $F$ sends the morphisms in $\i_n \cup \j_n \cup \p_n$ to weak equivalences.
\end{proposition}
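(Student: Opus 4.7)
The plan is to interpret the statement via Bousfield localisation and apply the machinery from Section 1. Since $\widehat{\Theta_n}$ is locally presentable and $F$ is cocontinuous, $F$ admits a right adjoint $G$ by the adjoint functor theorem. All objects of $\nqcat$ are cofibrant (cofibrations being monomorphisms), so by Brown's lemma the claim that $F$ preserves weak equivalences of $\nqcat$ is equivalent to $(F,G)$ being a Quillen adjunction when the source is endowed with the $\nqcat$ structure.

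The \emph{only if} direction reduces to verifying that every morphism in $\i_n \cup \j_n \cup \p_n$ is a weak equivalence of $\nqcat$. This is part of Ara's construction of $\nqcat$ in \cite{ara2014higher}: the spine inclusions and the maps $N_n(j_k)$ appear among the generating trivial cofibrations, and $N_n(J)$ is shown to be contractible in $\nqcat$ (it plays the role of an interval object), from which it follows that the projections in $\p_n$ are weak equivalences as well.

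For the \emph{if} direction, the idea is to present $\nqcat$ as the Bousfield localisation $L_S \m_0$ of a simpler model structure $\m_0$ on $\widehat{\Theta_n}$ at the set $S := \i_n \cup \j_n \cup \p_n$, where $\m_0$ has cofibrations equal to monomorphisms and for which $F$ is already left Quillen. Applying Theorem \ref{criterion for quillen adjunction after localisation}, the adjunction $(F, G)$ is Quillen for $\nqcat$ if and only if $G$ sends every fibrant object $Y$ of $\m$ to a fibrant object of $\nqcat$; by Proposition \ref{local fibrant objects and localisation}, the latter is equivalent to $G(Y)$ being $S$-local in $\m_0$ for every such $Y$, which by adjunction is equivalent to $F$ sending every morphism of $S$ to a weak equivalence of $\m$, precisely our hypothesis.

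The main technical obstacle is to pin down the intermediate model structure $\m_0$ and verify that $\nqcat$ really is its Bousfield localisation at exactly $S$. In Ara's Cisinski-type construction this is close to tautological, but some care is required to control any generating trivial cofibrations beyond $S$ (those obtained from $S$ by Cisinski-style saturation via pushout-products with boundary inclusions, pushouts along monomorphisms, and transfinite compositions), showing that they are still sent to weak equivalences by any cocontinuous functor $F$ that preserves monomorphisms and sends $S$ to weak equivalences.
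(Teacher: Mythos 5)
Your ``only if'' half is essentially the paper's, but two details are off. The morphisms $N_n(j_k)$ in $\j_n$ are not monomorphisms (they collapse $J_k$ onto a globe), so they are not cofibrations, let alone generating trivial cofibrations; what you actually need, and what suffices, is simply that $\i_n \cup \j_n$ generate the localizer defining $\nqcat$, so they are weak equivalences by definition. More importantly, contractibility of $N_n(J)$ alone does not give that the projections $N_n(J) \times \Theta_n[\theta] \to \Theta_n[\theta]$ are weak equivalences: taking the product with a fixed object need not preserve weak equivalences in $\nqcat$. The paper's argument is that $N_n(J) \to N_n(D_0)$ is a \emph{trivial fibration} (Ara, Corollary 6.7), and the projection is its pullback along $\Theta_n[\theta] \to N_n(D_0)$, hence again a trivial fibration.

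The real gap is in the ``if'' direction, precisely at the point you flag as the ``main technical obstacle'': there is no intermediate model structure $\m_0$ with cofibrations the monomorphisms for which ``$F$ is already left Quillen'' follows from your hypotheses. If $\m_0$ is the minimal Cisinski structure on $\tset$ (the only natural candidate with $\nqcat = L_{\i_n\cup\j_n}\m_0$), then proving $F$ left Quillen on $\m_0$ amounts to showing that $F^{-1}(\text{weak equivalences of }\m)$ contains the minimal localizer, i.e.\ that it \emph{is} a localizer --- which is exactly the statement being proved, so the reduction is circular. Your fallback, controlling the trivial cofibrations via Cisinski-style saturation of $S$ under pushout-products, also does not go through as stated: the anodyne maps are built from products with the cylinder $N_n(J)\times(-)$, and a merely cocontinuous $F$ need not preserve these products; moreover the trivial cofibrations of a Cisinski structure are in general strictly larger than the saturation of the explicit anodyne set. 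The paper fills precisely this hole with Cisinski's localizer machinery: setting $\W = F^{-1}(\text{weak equivalences of }\m)$, the hypotheses on $F$ (cocontinuity, monomorphisms to cofibrations) make $\W$ a \textit{pre-localizer} containing $\i_n\cup\j_n$, and Cisinski's Proposition 8.2.15, applied to the skeletal regular category $\Theta_n$ with the elementary homotopical datum $N_n(J)\times(-)$, upgrades $\W$ to a localizer exactly because $\p_n \subset \W$; hence $\W$ contains the localizer generated by $\i_n\cup\j_n$, i.e.\ all weak equivalences of $\nqcat$. Some such input (or an equivalent substitute) is indispensable; your proposal leaves it unproven.
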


\begin{proof}
Suppose that $F$ sends weak equivalences to weak equivalences. By definition, the morphisms of $\i_n$ and $\j_n$ are weak equivalences of $\nqcat$, and thus they are sent to weak equivalences. Since $N_n(J) \to N_n(D_0)$ is a trivial fibration \cite[Corollary 6.7]{ara2014higher}, then for every $\theta \in \Theta_n$, the projection $N_n(J) \times \Theta_n[\theta] \to \Theta_n[\theta]$ is also a trivial fibration (as a pullback of a trivial fibration), and hence a weak equivalence. Therefore, the morphisms of $\p_n$ are also sent to weak equivalences.

To show the converse implication, we use the theory of localizers developed by Cisinski in \cite{cisinski2006prefaisceaux}. Let $\W$ be the class of morphisms of $\widehat{\Theta_n}$ which are sent to weak equivalences by $F$. We want to show that $\W(\i_n \cup \j_n) \subset \W$, where $\W(\i_n \cup \j_n)$ is the localizer generated by $\i_n \cup \j_n$, which is by definition the class of weak equivalences of $\nqcat$. For that, it is sufficient to show that $\W$ is a localizer, since it contains $\i_n \cup \j_n$ by hypothesis. This is done by applying \cite[Proposition 8.2.15]{cisinski2006prefaisceaux} to the \textit{pre-localizer} \cite[Def. 8.2.10]{cisinski2006prefaisceaux} $\W$ and the \textit{catégorie squelettique régulière} \cite[Definition 8.2.3]{cisinski2006prefaisceaux} $\Theta_n$. The hypothesis of the cited proposition is precisely the fact that $\p_n \subset \W$, since $N_n(J) \times \theta$ is a \textit{donnée homotopique élémentaire} (see Section \ref{section cylinder} for more details on the definition).
\end{proof}


\begin{parag} \label{parag simplicial presheaves}
    A \textit{simplicial presheaf} on a small category $\a$ is a functor $\a^{\op} \to \sset$. A simplicial presheaf $X: \a^{\op} \to \sset$ corresponds to a presheaf $\hat{X}$ on $\a \times \Delta$ by putting $\hat{X}_{(a, [n])} = X(a)_n$, for every $a \in \a$ and $n \geq 0$. From now on, we identify the categories of simplicial presheaves on $\a$ and of presheaves on $\a \times \Delta$, which we denote by $\spsh(\a)$.

    A simplicial presheaf $X$ on $\a$ is said to be \textit{discrete} if for every $a \in \a$, the simplicial set $X(a)$ is discrete. If $a$ is an object of $\a$, we denote by $F(a)$ the discrete simplicial presheaf whose value in $b \in \a$ is the constant simplicial set at $\Hom_{\a}(b, a)$.
\end{parag}

\begin{parag} \label{thetan spaces}
    In \cite{rezk2010cartesian}, Rezk defines a model structure $\thetansp$ on the category of simplicial presheaves on $\Theta_n$ (or presheaves on $\Theta_n \times \Delta$), which provides another model for $(\infty, n)$-categories. The fibrant objects of $\thetansp$ are called \textit{$(\infty,n)$-$\Theta$-spaces}.
\end{parag}





Let us recall the two equivalences proven in \cite{ara2014higher} between $n$-quasi-categories and $(\infty,n)$-$\Theta$-spaces. We also state some lemmas relating the introduced functors, which will be useful in the following sections.

\begin{parag} \label{parag p and i_0}
Let $p: \Theta_n \times \Delta \to \Theta_n$ be the projection functor and $i_0: \Theta_n \to \Theta_n \times \Delta$ be the functor that sends $\theta \in \Theta_n$ to $(\theta, [0]) \in \Theta_n \times \Delta$. Since $[0]$ is the terminal object of $\Delta$, there is an adjunction:
$$p: \Theta_n \times \Delta \rightleftarrows \Theta_n: i_0$$
which induces an adjunction between that presheaf categories 
\begin{equation} \label{adj ara rezk 1}
    p^*: \tset \rightleftarrows \stset: i_0^*
\end{equation}

\end{parag}

\begin{theorem} \label{quillen eq 1 ara rezk}
The adjunction (\ref{adj ara rezk 1}) is a Quillen equivalence 
\begin{equation*}
    p^*: \nqcat \rightleftarrows \thetansp: i_0^*
\end{equation*}
\end{theorem}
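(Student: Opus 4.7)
I would verify the two conditions for Quillen equivalence separately: first that $(p^*, i_0^*)$ is a Quillen adjunction, and then the derived-unit condition.

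For the Quillen adjunction, since $p^*$ is cocontinuous (being a restriction functor, it has both left and right adjoints) and preserves monomorphisms, Proposition~\ref{recognition principle for left quillen functors} reduces the verification to checking that $p^*$ sends the three classes of generators $\i_n$, $\j_n$ and $\p_n$ into weak equivalences of $\thetansp$. The spine inclusions in $\i_n$ map to discrete versions of Segal maps inverted in Rezk's localization. The generating equivalences $N_n(J_k) \to N_n(D_{k-1})$ in $\j_n$ map to the completeness maps also inverted in $\thetansp$. For the projections in $\p_n$, I would first note that $p^*(N_n(J) \to N_n(D_0))$ is a weak equivalence (since it is the image of a map in $\j_n$, and $N_n(D_0) = *$), then use that $\thetansp$ is a Cartesian model structure and that $p^*$ preserves products to conclude that $p^*(N_n(J) \times \Theta_n[\theta]) \to p^*(\Theta_n[\theta])$ is a weak equivalence.

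For the Quillen equivalence, the crucial structural observation is that $p \circ i_0 = \mathrm{id}_{\Theta_n}$, which gives $i_0^* \circ p^* = \mathrm{id}_{\tset}$ and makes the unit of the adjunction literally the identity natural transformation. Accordingly, the Quillen equivalence reduces to the derived-unit condition: for every $X \in \tset$, the natural map $X \to i_0^* R_{\thetansp}(p^* X)$ is a weak equivalence in $\nqcat$, where $R_{\thetansp}$ denotes a fibrant replacement functor in $\thetansp$. Intuitively, this asserts that the ``underlying $n$-quasi-category'' obtained by taking the $0$-simplex level of the fibrant replacement of $p^* X$ in $\thetansp$ recovers $X$ up to weak equivalence.

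The main obstacle is precisely this derived-unit property. Its verification requires a detailed understanding of how Rezk's fibrant replacement adjoins simplicial-direction data to a discrete simplicial presheaf while faithfully encoding the underlying $n$-quasi-categorical content. My plan would be to reduce to the case of representables $X = \Theta_n[\theta]$, and then proceed by induction on the wreath-product structure of $\theta \in \Theta_n$, using Rezk's Segal and completeness conditions at each level to identify the fibrant replacement of $p^*\Theta_n[\theta]$ with an $(\infty, n)$-$\Theta$-space freely generated by $\theta$, whose $0$-simplex data recovers $\Theta_n[\theta]$ up to weak equivalence in $\nqcat$.
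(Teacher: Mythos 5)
The paper does not prove this theorem at all: it simply cites Ara \cite[Theorem 8.4.(1)]{ara2014higher}, so you are in effect attempting to reprove Ara's comparison from scratch. Your attempt has genuine gaps. First, a structural one: from $p\circ i_0=\id$ you correctly get that the unit $\id \to i_0^*p^*$ is the identity, but it does not follow that the Quillen equivalence ``reduces to the derived-unit condition.'' Since $p^*$ is the \emph{left} adjoint, invertibility of the derived unit $X \to i_0^* R_{\thetansp}(p^*X)$ only yields that $\L p^*$ is fully faithful; to conclude an equivalence you must also show that the derived counit $p^*(i_0^* Y) \to Y$ is a weak equivalence for fibrant $Y$, or equivalently that $i_0^*$ reflects weak equivalences between fibrant objects. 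This half of the criterion is never addressed in your proposal.

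Second, and more seriously, the step you yourself identify as the main obstacle --- the derived-unit property --- is only announced as a plan, not proved, and the proposed reduction is shaky. Reducing to representables requires knowing that the class of $X$ for which $X \to i_0^* R_{\thetansp}(p^*X)$ is a weak equivalence is closed under the relevant (homotopy) colimits; $\L p^*$ preserves homotopy colimits, but $\R i_0^*$ (a right derived functor involving fibrant replacement in $\thetansp$) does not obviously do so, so the induction over the cell structure of $\Theta_n[\theta]$ does not get off the ground without a substantial additional argument. This is exactly the hard content of Ara's theorem, which he handles by entirely different means (Cisinski's theory of skeletal categories and the comparison of the localizer on $\tset$ with its simplicial completion), not by a cell-by-cell computation of fibrant replacements. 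Finally, even in the Quillen-adjunction half, the assertion that $p^*$ of the maps $N_n(J_k)\to N_n(D_{k-1})$ are weak equivalences in $\thetansp$ because they ``map to the completeness maps'' is not a proof: Rezk's completeness localisation is formulated with the simplicial-space interval $E$, and identifying the \emph{discrete} presheaves $p^*N_n(J_k)$ with these up to weak equivalence in $\thetansp$ is itself a nontrivial comparison that would need to be supplied.
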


\begin{proof}
See \cite[Theorem 8.4.(1)]{ara2014higher}. 
\end{proof}

\begin{lemma} \label{lemma computation f e p^*}
    The functor $F: \Theta_n \to \stset$ defined in §\ref{parag simplicial presheaves} can be factored in two ways displayed in the following commuting (up to isomorphism) diagram
\[\begin{tikzcd}
	{\Theta_n} & {\Theta_n \times \Delta} \\
	\tset & \stset
	\arrow[from=1-1, to=2-1]
	\arrow["F", from=1-1, to=2-2]
	\arrow["{p^*}"', from=2-1, to=2-2]
	\arrow["{i_0}", from=1-1, to=1-2]
	\arrow[from=1-2, to=2-2]
\end{tikzcd}\]
where the vertical arrows are Yoneda embeddings.
\end{lemma}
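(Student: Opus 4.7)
The plan is to directly compute all three functors $\Theta_n \to \stset$ on an arbitrary object $\theta \in \Theta_n$, evaluated at an arbitrary $(\theta', [k]) \in \Theta_n \times \Delta$, and observe that we obtain the same set $\Hom_{\Theta_n}(\theta', \theta)$ in each case, naturally. Since the identifications in the three cases are either definitional or given by the Yoneda lemma, the naturality in $\theta$ (and in $(\theta', [k])$) is automatic.

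First, unpacking the definition of §\ref{parag simplicial presheaves} in the guise of a presheaf on $\Theta_n \times \Delta$, one has
$$F(\theta)_{(\theta', [k])} = F(\theta)(\theta')_k = \Hom_{\Theta_n}(\theta', \theta),$$
since $F(\theta)(\theta')$ is by definition the constant simplicial set on $\Hom_{\Theta_n}(\theta', \theta)$. Next, by the very definition of the restriction functor $p^*$ (as precomposition with $p^{\op}$) followed by the Yoneda formula for representables,
$$p^*(\Theta_n[\theta])_{(\theta', [k])} = \Theta_n[\theta]_{p(\theta', [k])} = \Theta_n[\theta]_{\theta'} = \Hom_{\Theta_n}(\theta', \theta).$$
Finally, the Yoneda embedding of $i_0(\theta) = (\theta, [0])$ in $\stset$ evaluates to
$$(\Theta_n \times \Delta)[(\theta,[0])]_{(\theta', [k])} = \Hom_{\Theta_n}(\theta', \theta) \times \Hom_{\Delta}([k], [0]) \cong \Hom_{\Theta_n}(\theta', \theta),$$
because $[0]$ is terminal in $\Delta$.

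There is no real obstacle here; the only mild point is to check that the bijections above are natural in both variables, which is immediate from the fact that each one is induced by identity-on-hom-sets maps or by projection onto the first factor of a cartesian product with a singleton. Stringing together the isomorphisms, the two composites agree (up to canonical natural isomorphism) with $F$, which is precisely the content of the lemma.
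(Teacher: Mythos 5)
Your proposal is correct: it is exactly the "straightforward computation from the definitions" that the paper's proof invokes, carried out explicitly by evaluating all three composites at an arbitrary $(\theta',[k])$ and identifying each with $\Hom_{\Theta_n}(\theta',\theta)$, with naturality being immediate. This matches the paper's approach, just written out in full.
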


\begin{proof}
    Straightforward computation from the definitions.
\end{proof}

\begin{parag}
    Let $G: \Delta \to \tset$ be the composite of the functors 
    $$\Delta \xrightarrow[]{} \Cat \xrightarrow[]{\Pi} \Gpd \to \ncat \xrightarrow[]{N_n} \tset $$
    where the unlabeled arrows are inclusions, $\Pi$ is the free-groupoid functor and $N_n$ is the strict $n$-nerve of §\ref{parag nerve}. We define $t: \Theta_n \times \Delta \to \tset$ on objects by $t(\theta, [n]) = \Theta_n[\theta] \times G([n])$. The functor $t$ induces an adjunction
    \begin{equation} \label{adj ara rezk 2}
        t_!: \stset \rightleftarrows \tset: t^*
    \end{equation}
\end{parag}

\begin{theorem} \label{quillen eq 2 ara rezk}
The adjunction (\ref{adj ara rezk 2}) is a Quillen equivalence 
\begin{equation*}
    t_!: \thetansp \rightleftarrows \nqcat : t^*
\end{equation*}
\end{theorem}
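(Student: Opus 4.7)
The plan is to leverage the already established Quillen equivalence of Theorem \ref{quillen eq 1 ara rezk} together with an underlying compatibility between $t_!$ and $p^*$, and to conclude by a 2-out-of-3 argument for Quillen equivalences.

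First, I would record the following compatibility of underlying functors. Since $[0]$ is terminal in $\Delta$, the groupoid $\Pi[0]$ is trivial and $G([0])$ is the terminal object of $\tset$; hence $t(\theta,[0]) = \Theta_n[\theta] \times G([0]) \cong \Theta_n[\theta]$, so $t \circ i_0$ is canonically isomorphic to the Yoneda embedding $\Theta_n \hookrightarrow \tset$. Moreover, the category-level adjunction $p \dashv i_0$ forces $p^* \cong i_{0!}$: both functors are cocontinuous (precomposition commutes with colimits, and $i_{0!}$ is a left adjoint) and both send the representable $\Theta_n[\theta]$ to the representable at $(\theta,[0])$. Consequently,
\[
t_! \circ p^* \;\cong\; t_! \circ i_{0!} \;\cong\; (t \circ i_0)_! \;\cong\; \id_{\tset},
\]
using that the left Kan extension of a Yoneda embedding along itself is the identity.

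The main technical step --- and the principal obstacle --- is to verify that $(t_!,t^*)$ is a Quillen adjunction from $\thetansp$ to $\nqcat$. Both model structures are cofibrantly generated, so it suffices to show that $t_!$ sends the generating (trivial) cofibrations of $\thetansp$ to (trivial) cofibrations of $\nqcat$. For ordinary cofibrations, cocontinuity of $t_!$ together with the explicit formula $t_!(\Theta_n \times \Delta[(\theta,[k])]) = \Theta_n[\theta] \times G([k])$ reduces preservation to checking that the images of the (pushout-product) generating monomorphisms of $\thetansp$ remain monomorphisms in $\tset$, which is a direct inspection. For the generating trivial cofibrations --- those encoding Rezk's Segal condition, completeness, and their higher-dimensional analogues --- the essential homotopical input is that each groupoid nerve $G([k])$ is contractible in $\nqcat$, so the projection $\Theta_n[\theta] \times G([k]) \to \Theta_n[\theta]$ is a trivial fibration (extending the class $\p_n$ of §\ref{parag projections}); combined with bookkeeping expressing $t_!$ of Rezk's Segal-type maps in terms of spine inclusions from $\i_n$ and such projections, this yields the needed preservation of weak equivalences.

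With the Quillen adjunction at hand, 2-out-of-3 for Quillen equivalences concludes the proof. Composition of left Quillen functors gives $\L(t_! \circ p^*) \cong \L t_! \circ \L p^*$, and combined with $t_! \circ p^* \cong \id_{\tset}$ from the first step this yields $\L t_! \circ \L p^* \cong \id$ on $\Ho(\nqcat)$. Since $\L p^*$ is an equivalence of homotopy categories by Theorem \ref{quillen eq 1 ara rezk}, it admits a two-sided quasi-inverse; a left inverse to an equivalence is necessarily such a quasi-inverse, so $\L t_!$ is also an equivalence, whence $(t_!,t^*)$ is a Quillen equivalence.
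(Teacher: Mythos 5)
The outer frame of your argument is sound and, in fact, close to how this statement is actually established: the isomorphism $t_!\circ p^*\cong\id_{\tset}$ (your step 1 is correct, and is essentially Lemma \ref{lemma computation t_!} combined with Lemma \ref{lemma computation f e p^*} and cocontinuity), together with Theorem \ref{quillen eq 1 ara rezk} and 2-out-of-3 for derived equivalences, does reduce everything to knowing that $(t_!,t^*)$ is a Quillen adjunction. Note, however, that the paper does not prove the theorem at all: it cites \cite[Corollary 8.8]{ara2014higher}, and the Quillen-adjunction step you flag as ``the principal obstacle'' is precisely the mathematical content of that citation (a substantial part of Ara's Sections 7--8), so it cannot be dispatched in a sentence of ``bookkeeping''.

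More importantly, the route you propose for that step would not go through as stated. You say it suffices to check that $t_!$ sends the generating (trivial) cofibrations of $\thetansp$ to (trivial) cofibrations and then inspect them directly. But $\thetansp$ is a left Bousfield localisation of the injective model structure on simplicial presheaves on $\Theta_n$: neither the localised structure nor the injective structure admits an explicit set of generating trivial cofibrations (they exist by combinatoriality, but the localising maps are \emph{not} generating trivial cofibrations), so there is nothing concrete to inspect. The workable strategies are: (a) Ara's, which presents $\thetansp$ via Cisinski's localizer machinery on $\widehat{\Theta_n\times\Delta}$ and then applies a recognition principle analogous to Proposition \ref{recognition principle for left quillen functors}, which requires proving that $t_!$ preserves monomorphisms (not automatic for a left Kan extension; it needs an analysis of $G$ and of products/box-products) and that $t_!$ sends the explicit localising maps --- Segal maps, the completeness map involving the walking isomorphism in the simplicial direction, and the simplicial-direction homotopy data --- to weak equivalences of $\nqcat$; or (b) the right-adjoint criterion of Theorem \ref{criterion for quillen adjunction after localisation}, verifying that $t^!$ carries $n$-quasi-categories to fibrant objects of $\thetansp$. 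Even the ingredient you do name, that $\Theta_n[\theta]\times G([k])\to\Theta_n[\theta]$ is a trivial fibration, requires proof (the paper's cited \cite[Corollary 6.7]{ara2014higher} covers $N_n(J)$, not $N_n(\Pi[k])$ for general $k$). So as written there is a genuine gap: the heart of the theorem is asserted rather than proved.
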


\begin{proof}
See \cite[Corollary 8.8]{ara2014higher}. 
\end{proof}

\begin{lemma} \label{lemma computation t_!}
    The following triangle commutes up to isomorphism of functors
\[\begin{tikzcd}
	& {\Theta_n} \\
	\stset && \tset
	\arrow["F"', from=1-2, to=2-1]
	\arrow[from=1-2, to=2-3]
	\arrow["{t_!}", from=2-1, to=2-3]
\end{tikzcd}\]
where the right diagonal functor is the Yoneda embedding.
\end{lemma}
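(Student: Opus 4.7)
The plan is to reduce the verification to a direct computation using the universal property of the left Kan extension $t_!$. By Lemma \ref{lemma computation f e p^*}, the functor $F: \Theta_n \to \stset$ factors as the functor $i_0: \Theta_n \to \Theta_n \times \Delta$, $\theta \mapsto (\theta, [0])$, followed by the Yoneda embedding of $\Theta_n \times \Delta$. Consequently $F(\theta)$ is, up to canonical isomorphism, the representable presheaf $(\Theta_n \times \Delta)[(\theta, [0])]$, naturally in $\theta$.

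Since $t_!: \stset \to \tset$ is the left Kan extension of $t$ along the Yoneda embedding, its value on representables is computed by $t$ itself. Applying this at $(\theta, [0])$ yields a natural isomorphism
\[
t_!(F(\theta)) \;\cong\; t(\theta, [0]) \;=\; \Theta_n[\theta] \times G([0]).
\]
It then remains to identify $G([0])$ with the terminal presheaf in $\tset$. This is a straightforward unfolding of the definition of $G$ given in the excerpt: the ordinal $[0]$ is sent successively to the terminal category, to the terminal groupoid, and then to the terminal $n$-category $D_0 = [0]$, whose strict $n$-nerve is the representable $\Theta_n[D_0]$; this is the terminal presheaf since $D_0$ is the terminal object of $\Theta_n$ and $N_n$, being fully faithful and a right adjoint, preserves it. The canonical projection $\Theta_n[\theta] \times G([0]) \to \Theta_n[\theta]$ is therefore an isomorphism, and composing the natural isomorphisms obtained at each step produces the desired natural isomorphism between $t_! \circ F$ and the Yoneda embedding $\Theta_n \to \tset$.

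I do not anticipate any real obstacle in this lemma: every step is the invocation of a universal property. The only point requiring mild care is checking that the composite isomorphism is natural in $\theta$, but this is automatic from the naturality of the three identifications used (the Yoneda factorisation of $F$ via $i_0$, the defining isomorphism of a left Kan extension on representables, and the unit projection out of a product with a terminal object).
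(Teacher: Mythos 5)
Your proof is correct and follows essentially the same route as the paper's: factor $F$ through $i_0$ and the Yoneda embedding of $\Theta_n \times \Delta$ via Lemma \ref{lemma computation f e p^*}, use that the left Kan extension $t_!$ restricts to $t$ on representables, and conclude from $t(\theta,[0]) = \Theta_n[\theta] \times G([0]) \cong \Theta_n[\theta]$ since $G([0])$ is the terminal presheaf. The extra detail you give on why $G([0])$ is terminal is a harmless elaboration of the same observation made in the paper.
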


\begin{proof}
    We want to show that the composite $t_! F$ is isomorphic to the Yoneda embedding of $\Theta_n$. Using the upper commutative triangle of Lemma \ref{lemma computation f e p^*}, we replace $F$ by $ \y i_0$, where $\y$ is the Yoneda embedding of $\Theta_n \times \Delta$. So $t_! F  \cong t_! \y i_0 = t i_0$, the last equality coming from the definition of $t_!$ as the left Kan extension of $t$ along $\y$. For every $\theta \in \Theta_n$, we have
    $$t i_0(\theta) = t(\theta, [0]) = \Theta_n[\theta] \times G([0]) \cong \Theta_n[\theta]$$
    since $G(0) = \Theta_n[0]$ is the terminal object of $\tset$.
\end{proof}



\section{A cylinder for $n$-quasi-categories} \label{section cylinder}

In this section, we recall the definition of a functorial cylinder in a presheaf category and of an elementary homotopical datum (\textit{donnée homotopique élémentaire}), following \cite{cisinski2006prefaisceaux}. We then specialize to the case of presheaves on a wreath product $\cc = \Delta \wr \a$, where $\a$ has a terminal object (denoted by $*$). We give a description, for every object $c$ of $\cc$, of the product $c \times [1](*)$ in $\psh(\cc)$ as a colimit of representables, which generalizes the description of products $[p] \times [1]$ of simplicial sets. Finally, we look at the case $\cc = \Theta_n (= \Delta \wr \Theta_{n-1})$, and modify this cylinder to obtain another one, better suited to the model structure for $n$-quasi-categories, while still having a simple combinatorial description.

Let $\cc$ be a small category.

\begin{definition} \label{definition cylinder}
A \textit{cylinder} for a presheaf $X$ on $\cc$ is the data of a presheaf $\i X$ and of morphisms $\partial_X^\varepsilon: X \to \i X$, $\varepsilon = 0,1$ and $\sigma_X: \i X \to X$ such that the following diagram commutes
\[\begin{tikzcd}
	X \\
	& \i X & X \\
	X
	\arrow["{\sigma_X}", from=2-2, to=2-3]
	\arrow["{\partial_X^0}", from=1-1, to=2-2]
	\arrow["{\partial_X^1}"', from=3-1, to=2-2]
	\arrow["{\id_X}"', curve={height=12pt}, from=3-1, to=2-3]
	\arrow["{\id_X}", curve={height=-12pt}, from=1-1, to=2-3]
\end{tikzcd}\]
and that $\langle \partial_X^0, \partial_X^1 \rangle : X \sqcup X \to \i X$ is a monomorphism.
\end{definition}

\begin{definition} \label{functorial cylinder}
    A \textit{functorial cylinder} on $\cc$ is the data of a functor $\i : \psh(\cc) \to \psh(\cc)$ and of natural transformations $\partial^\varepsilon: \id \to \i $, $\varepsilon = 0,1$, and $\sigma: \i  \to \id$, such that for every presheaf $X$ on $\cc$, $(\i (X), \partial^0_X, \partial^1_X, \sigma_X)$ is a cylinder for $X$.
\end{definition}

\begin{definition} \label{def DHE}
    An \textit{elementary homotopical datum} (EHD) on $\cc$ is a functorial cylinder $(\i , \partial^0, \partial^1, \sigma)$ verifying the two following axioms:

    (HD1) The functor $\i $ commutes with small colimits and preserves monomorphisms.
    
    (HD2) For every monomorphism $j: K \to L$ in $\psh(\cc)$ and $\varepsilon = 0,1$, the following square is a pullback
\[\begin{tikzcd}
	K & L \\
	{\i (K)} & {\i (L)}
	\arrow["j", from=1-1, to=1-2]
	\arrow["{\partial^\varepsilon_K}"', from=1-1, to=2-1]
	\arrow["{\partial^\varepsilon_L}", from=1-2, to=2-2]
	\arrow["{\i (j)}"', from=2-1, to=2-2]
\end{tikzcd}\]
\end{definition}

\begin{definition} \label{def interval}
    An \textit{interval} of $\psh(\cc)$ is a presheaf $I$ on $\cc$ equipped with two morphisms $\{\varepsilon\}: \{*\} \to I$, $\varepsilon = 0,1$. An interval is \textit{separating} if $\{0\} \cap \{1\} = \emptyset$.
\end{definition}

\begin{parag} \label{cylinder with separating interval}
If $I$ is a separating interval of $\psh(\cc)$, the functor $I \times (-): \psh(\cc) \to \psh(\cc)$ defines a functorial cylinder on $\cc$. The components at a presheaf $X$ of the natural transformations $\partial^\varepsilon$ and $\sigma$ are given by $\partial^\varepsilon_X =  X \times \{\varepsilon\} : X \cong X \times \{*\} \to X \times I$ and $\sigma_X = p_1: X \times I \to X$. This functorial cylinder is an EHD \cite[Example 1.3.8]{cisinski2006prefaisceaux}.
\end{parag}

\begin{parag}
    If $(I, \{\varepsilon_I\})$ is a separating interval and $j: I \to J$ is a monomorphism, then $J$ can be endowed with the structure of a separating interval by taking $\{ \varepsilon_J\} = j \{\varepsilon_I\}$.
\end{parag}

\begin{example} \label{example interval}
The nerves $N_n(D_1)$ and $N_n(J)$ are separating intervals of $\tset$.
\end{example}

Suppose that the category $\cc$ has a terminal object $[0]$. If $X$ a presheaf on $\cc$, there is a monomorphism $i_X: X_0 \to X$, where $X_0$ is seen as the constant presheaf at the set $X_0$ (simpler notation for $X_{[0]}$).

Our first goal is to show that, given a separating interval $I$ and a monomorphism $j: I \to J$, the presheaf
$$\j(X) = X \times I \bigsqcup_{X_0 \times I} X_0 \times J$$
has the structure of an EHD. 

\begin{parag} \label{j(X) as subpresheaf of X x J}
    Since both morphisms $i_X: X_0 \to X$ and $j: I \to J$ are monomorphisms, all the morphisms of the following pushout square are monomorphisms.

\[\begin{tikzcd}
	{X_0 \times I} & {X \times I} \\
	{X_0 \times J} & {\j(X)}
	\arrow[from=1-1, to=2-1]
	\arrow[from=1-1, to=1-2]
	\arrow[from=2-1, to=2-2]
	\arrow[from=1-2, to=2-2]
\end{tikzcd}\]

Since the square is also a pullback, the induced arrow $\j(X) \to X \times J$ is also a monomorphism, and we write $\j(X) = X \times I \cup X_0 \times J \subset X \times J$.
\end{parag}

\begin{proposition} \label{j(X) is an EHD}
The construction $X \mapsto \j(X)$ above defines a functorial cylinder on $\cc$, which is moreover an EHD.
\end{proposition}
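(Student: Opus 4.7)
The plan is to leverage the description of $\j(X)$ as a subpresheaf of $X \times J$ given in §\ref{j(X) as subpresheaf of X x J}, which lets us transport most verifications from the familiar product cylinder $(-) \times J$ onto $\j$.

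I would first extend $\j$ to a functor on $\psh(\cc)$ by functoriality of products, of the evaluation $X \mapsto X_0$, and of pushouts, then construct the three natural transformations. The face maps $\partial^\varepsilon_X \colon X \to \j(X)$ I define as the composite $X \cong X \times \{*\} \xrightarrow{\id_X \times \{\varepsilon\}} X \times I \to \j(X)$, using the separating interval points of $I$. The collapse $\sigma_X \colon \j(X) \to X$ arises from the universal property of the defining pushout: the first projection $X \times I \to X$ and the composite $X_0 \times J \to X_0 \xrightarrow{i_X} X$ agree after restriction to $X_0 \times I$ (both become $X_0 \times I \to X_0 \xrightarrow{i_X} X$). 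The identities $\sigma_X \partial^\varepsilon_X = \id_X$ are then immediate from the definitions; for the monomorphism $\langle \partial^0_X, \partial^1_X \rangle \colon X \sqcup X \to \j(X)$, I would factor it as $X \sqcup X \hookrightarrow X \times I \hookrightarrow \j(X)$, the first map being a monomorphism because $I$ is separating and the second by §\ref{j(X) as subpresheaf of X x J}.

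To verify (HD1), I would argue that $\j$ is built from cocontinuous functors via a pushout, hence is itself cocontinuous: $(-) \times I$ and $(-) \times J$ are left adjoints in the cartesian closed presheaf category $\psh(\cc)$, while $X \mapsto X_0$ decomposes as evaluation at $[0]$ (cocontinuous since colimits in presheaves are pointwise) followed by the constant-presheaf functor $\Set \to \psh(\cc)$ (cocontinuous as a left adjoint to global sections). For preservation of monomorphisms, a mono $f \colon X \to Y$ yields a composite $\j(X) \hookrightarrow X \times J \xrightarrow{f \times J} Y \times J$ which is a monomorphism, and since $\j(Y) \hookrightarrow Y \times J$ is a monomorphism, this forces $\j(f) \colon \j(X) \to \j(Y)$ to be a monomorphism.

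The heart of the proof will be axiom (HD2). For a monomorphism $j \colon K \to L$, I would compute the pullback $L \times_{\j(L)} \j(K)$ pointwise while viewing $\j$ inside $(-) \times J$: an element at level $c$ is a pair $(\ell, (k, x))$ with $(k, x) \in \j(K)(c) \subseteq K(c) \times J(c)$ and $(j(k), x) = (\ell, \{\varepsilon\})$ in $L(c) \times J(c)$. The second condition forces $x = \{\varepsilon\} \in I$, and then $(k, \{\varepsilon\})$ automatically belongs to $K \times I \subseteq \j(K)$ for every $k$, so injectivity of $j$ identifies the pullback with $K$ as required. I expect the main obstacle to be the bookkeeping across the pushout and product descriptions of $\j$, but once the embedding $\j(X) \subseteq X \times J$ is exploited, the verification is essentially formal.
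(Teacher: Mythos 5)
Your proposal is correct and follows essentially the same route as the paper: the structure maps $\partial^\varepsilon_X$ and $\sigma_X$ are constructed in the same way (the paper defines $\sigma_X$ via the embedding $\j(X) \subset X \times J$ followed by the projection, which agrees with your pushout construction), the monomorphism $X \sqcup X \to \j(X)$ is factored through $X \times I$ identically, and (HD1) and (HD2) are checked by the same arguments, with your pointwise verification of (HD2) simply spelling out what the paper leaves as "clear". The only cosmetic quibble is that your appeal to injectivity of $j$ in the (HD2) computation is not actually needed, since the second coordinate already determines $k$.
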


\begin{proof}
The functoriality of $\j$ comes from the functoriality of $X \mapsto X_0$, the functoriality of the product and the universal property of the pushout.

The morphisms $\partial^\varepsilon_X: X \to \j(X)$ are the composites 
$$X \xrightarrow[]{X \times \{\varepsilon\}} X \times I \to \j(X)$$
Since $I$ is a separating interval, $X \times I$ is a cylinder and $X \sqcup X \to X \times I$ is a monomorphism. Therefore, $\langle \partial^0_X, \partial^1_X \rangle: X \sqcup X \to \j(X)$ is a monomorphism, as the composite of the monomorphisms $X \sqcup X \to X \times I$ and $X \times I \to \j(X)$.

The morphism $\sigma_X: X \to \j(X)$ is the composite of the inclusion $\j(X) \subset X \times J$ with the projection $X \times J \to X$. For $\varepsilon = 0,1$, we have $\sigma_X \partial^\varepsilon_X = \id_X$, as can be seen in the commuting diagram
\[\begin{tikzcd}
	X && {X \times I} & {\j(X)} & {X \times J} \\
	&& X
	\arrow["{X \times \{\varepsilon\}}", from=1-1, to=1-3]
	\arrow[from=1-3, to=1-4]
	\arrow[from=1-4, to=1-5]
	\arrow["{p_1}", from=1-3, to=2-3]
	\arrow["{\id_X}"', from=1-1, to=2-3]
	\arrow["{p_1}", curve={height=-12pt}, from=1-5, to=2-3]
	\arrow["{\partial^\varepsilon_X}", curve={height=-24pt}, from=1-1, to=1-4]
	\arrow["{\sigma_X}", from=1-4, to=2-3]
\end{tikzcd}\]
where all the horizontal arrows are inclusions. Thus $\j$ is a functorial cylinder. 

Let us show that $(\j, \partial^0, \partial^1, \sigma)$ is an EHD. The functor $\j$ commutes with small colimits, since products and pushouts of presheaves commute with small colimits. Now let $X \to Y$ be a monomorphism between presheaves on $\cc$. In particular, the map $X_0 \to Y_0$ is a monomorphism. The induced arrow $\j(X) \to \j(Y)$ is simply the inclusion $X \times I \cup X_0 \times J \subset Y \times I \cup Y_0 \times J$, so $\j$ preserves monomorphisms. It remains to show (HD2), that is, that given a monomorphism $X \to Y$ and $\varepsilon = 0,1$, the following commuting square is a pullback
\[\begin{tikzcd}
	X & Y \\
	{\j(X)} & {\j(Y)}
	\arrow[from=2-1, to=2-2]
	\arrow["{\partial^\varepsilon_X}"', from=1-1, to=2-1]
	\arrow["{\partial^\varepsilon_Y}", from=1-2, to=2-2]
	\arrow[from=1-1, to=1-2]
\end{tikzcd}\]
This is clear since all arrows are monomorphisms of presheaves of sets. \end{proof}


Now we work in the case $\cc = \Delta \wr \a$, where $\a$ is a small category with a terminal object $*$. Let $a \in \a$ and $I = [1](*)$. We will give a presentation of $a \times I$ as a colimit of representables (see Proposition \ref{prop characterisation of cylinder}).

\begin{parag} \label{alternative construction of theta x I}
    Let $c \in \cc$. By definition, $c$ is of the form $[p](a_1, \ldots, a_p)$, where $p \geq 0$ and $a_1, \ldots, a_p \in \a$. Let $0 \leq i \leq p$. We define $c^i \in \cc$ as 
    $$c^i = [p+1](a_1, \ldots, a_{i}, *, a_{i+1}, \ldots, a_p)$$
    for $1 \leq i < p$ and 
    $$c^0 = [p+1](*, a_1, \ldots, a_p)$$
    $$c^p = [p+1](a_1, \ldots, a_p, *)$$

We introduce morphisms $\alpha^i: c \to c^i$, for $i = 0, \ldots, p-1$, induced by $(0, \ldots, i, i+2, \ldots, p+1): [p] \to [p+1]$,  and morphisms $\beta^i: c \to c^i$, for $i = 1, \ldots, p$, induced by $(0, \ldots, i-1, i+1, \ldots, p+1): [p] \to [p+1]$. We can picture the morphisms $\alpha^i$ and $\beta^i$ by the following plain and dotted arrows, respectively.

\[\begin{tikzcd}
	c && {{}} & {i-1} & i & {i+1} & {{}} \\
	\\
	{c^i} & {{}} & {i-1} & i && {i+1} & {i+2} & {{}}
	\arrow["{a_i}", no head, from=1-4, to=1-5]
	\arrow["{a_{i+1}}", no head, from=1-5, to=1-6]
	\arrow[dashed, no head, from=1-3, to=1-4]
	\arrow[dashed, no head, from=1-6, to=1-7]
	\arrow[dashed, no head, from=3-2, to=3-3]
	\arrow[dashed, no head, from=3-7, to=3-8]
	\arrow["{a_i}", no head, from=3-3, to=3-4]
	\arrow["{a_{i+1}}", no head, from=3-6, to=3-7]
	\arrow["{*}", no head, from=3-4, to=3-6]
	\arrow[shift left=2, maps to, from=1-4, to=3-3]
	\arrow[maps to, from=1-5, to=3-4]
	\arrow[shift right=2, maps to, from=1-6, to=3-7]
	\arrow[shift right=2, dotted, maps to, from=1-4, to=3-3]
	\arrow[shift left=2, dotted, maps to, from=1-6, to=3-7]
	\arrow[dotted, maps to, from=1-5, to=3-6]
	\arrow["{\alpha^i}"', shift right=2, from=1-1, to=3-1]
	\arrow["{\beta^i}", shift left=2, dotted, from=1-1, to=3-1]
\end{tikzcd}\]
\end{parag}

Let $\i(c)$ be the colimit of the following diagram in $\tset$
\[\begin{tikzcd}
	& c && c & \ldots & c \\
	{c^0} && {c^1} &&&& {c^p}
	\arrow["{\alpha^0}", from=1-2, to=2-1]
	\arrow["{\beta^1}"', from=1-2, to=2-3]
	\arrow["{\alpha^1}", from=1-4, to=2-3]
	\arrow["{\beta^p}"', from=1-6, to=2-7]
\end{tikzcd}\]

\begin{proposition} \label{prop characterisation of cylinder}
    Let $\a$ be a small category with a terminal object $*$. Let $\cc$ be the wreath product $\Delta \wr \a$ let $I$ be the object $[1](*)$ of $\cc$. For every $c \in \cc$, there is an isomorphism
    $$\i(c) \xrightarrow[]{\cong} c \times I$$
    in $\psh(\cc)$.
\end{proposition}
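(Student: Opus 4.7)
The plan is to construct $\varphi : \i(c) \to c \times I$ directly from the universal property of the colimit, and then to show it is an isomorphism by checking bijectivity of $\varphi_d$ for every object $d$ of $\cc$ via the Yoneda lemma.

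First, for each $i = 0, 1, \ldots, p$, I would exhibit a morphism $\gamma^i : c^i \to c \times I$ in $\psh(\cc)$. Since $c^i$ is representable, by Yoneda this amounts to a pair of morphisms $\pi^i : c^i \to c$ and $\chi^i : c^i \to I$ in $\cc$. The morphism $\pi^i$ is induced at the $\Delta$-level by the degeneracy $s_i : [p+1] \to [p]$ together with identity entry maps on the non-starred entries of $c^i$; the entry $*$ of $c^i$ contributes no data, since it sits in the fibre of $s_i$ collapsed onto the vertex $i$. The morphism $\chi^i$ is induced by the $\Delta$-level map $[p+1] \to [1]$ sending $\{0, \ldots, i\}$ to $0$ and $\{i+1, \ldots, p+1\}$ to $1$, with the unique (terminal) entry map $* \to *$ at the single required position.

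Next, I would verify the compatibility $\gamma^i \circ \alpha^i = \gamma^{i+1} \circ \beta^{i+1}$ for $i = 0, \ldots, p-1$. Both $\alpha^i$ and $\beta^{i+1}$ have $\Delta$-part $d_{i+1}$, and $s_i \circ d_{i+1} = \id_{[p]} = s_{i+1} \circ d_{i+1}$, so both composites have first component $\id_c$. A direct computation shows their second components coincide and equal the map $c \to I$ sending $\{0, \ldots, i\}$ to $0$ and $\{i+1, \ldots, p\}$ to $1$. The universal property of the colimit then produces the desired morphism $\varphi : \i(c) \to c \times I$.

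To show $\varphi$ is an isomorphism, I would verify that for every $d = [q](e_1, \ldots, e_q)$ in $\cc$, the induced map $\varphi_d : \i(c)_d \to \Hom_\cc(d, c) \times \Hom_\cc(d, I)$ is bijective. Since $*$ is terminal in $\a$, a morphism $v : d \to I$ is determined by a ``cut position'' $r \in \{-1, 0, \ldots, q\}$: the $\Delta$-part of $v$ sends $\{0, \ldots, r\}$ to $0$ and $\{r+1, \ldots, q\}$ to $1$, with $r = -1$ and $r = q$ corresponding to the two constant maps. Given a pair $(u, v)$ with $u$ having $\Delta$-part $h : [q] \to [p]$ and $v$ corresponding to cut $r$, I would set $i = h(r)$ (with the convention $h(-1) = 0$) and construct a lift $\tilde u : d \to c^i$ whose $\Delta$-part is $\tilde h(k) = h(k)$ for $k \le r$ and $\tilde h(k) = h(k)+1$ for $k > r$, the entry maps being inherited from those of $u$ after inserting the unique map $e_{r+1} \to *$ at position $i+1$ and reindexing.

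The main obstacle will be to show that this assignment descends to a well-defined bijection $\Hom_\cc(d, c) \times \Hom_\cc(d, I) \to \i(c)_d$. A pair $(u, v)$ admits lifts to $c^i$ for every $i$ in the range $[h(r), h(r+1)]$; whenever this range contains more than one integer, consecutive lifts to $c^i$ and $c^{i+1}$ both factor through a common morphism $d \to c$ via $\alpha^i$ and $\beta^{i+1}$ respectively, and so represent the same element of $\i(c)$. This is a combinatorial verification generalising the classical decomposition of $\Delta[p] \times \Delta[1]$ in $\psh(\Delta)$; the wreath-product entry data add bookkeeping but no genuine choice, since entry maps into $*$ are forced by terminality.
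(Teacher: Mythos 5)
Your proposal is correct and follows essentially the same route as the paper's proof: you assemble the comparison map from the degeneracy maps $c^i \to c$ and the cut maps $c^i \to I$, and you verify bijectivity objectwise by producing the lift of a pair $(u,v)$ determined by the cut position of $v$ and by noting that the admissible lifts to consecutive $c^i$ and $c^{i+1}$ are identified in the colimit because both factor through $u: d \to c$ via $\alpha^i$ and $\beta^{i+1}$. This is exactly the paper's argument, including its key well-definedness step, differing only in bookkeeping conventions for the two constant cuts.
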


\begin{proof}
    A morphism $\i(c) \to c \times I$ corresponds to a morphism $\i(c) \to c$ and a morphism $\i(c) \to I$. For $0 \leq i \leq p$, let $\sigma^i: c^{i} \to c$ be the morphism induced by $\sigma^{i}: [p+1] \to [p]$ sending $i$ and $i+1$ to $i$. Since for every $i = 1, \ldots p$ the diagrams

\[\begin{tikzcd}
	& {c^{i-1}} \\
	c && c \\
	& {c^i}
	\arrow["{\alpha^{i-1}}", from=2-1, to=1-2]
	\arrow["{\beta^i}"', from=2-1, to=3-2]
	\arrow["{\sigma^{i-1}}", from=1-2, to=2-3]
	\arrow["{\sigma^i}"', from=3-2, to=2-3]
\end{tikzcd}\]
commute (both composites equal $\id_c$), the morphisms $\sigma^i$ induce a morphism $\sigma: \i(c) \to c$.

For $0 \leq i \leq p$, let $\gamma^{i}: c^i \to [1](*) = I$ be the morphism sending $i$ to $0$ and $i+1$ to $1$. Since for every $i = 1, \ldots p$ the diagrams

\[\begin{tikzcd}
	& {c^{i-1}} \\
	c && I \\
	& {c^i}
	\arrow["{\alpha^{i-1}}", from=2-1, to=1-2]
	\arrow["{\beta^i}"', from=2-1, to=3-2]
	\arrow["{\gamma^{i-1}}", from=1-2, to=2-3]
	\arrow["{\gamma^i}"', from=3-2, to=2-3]
\end{tikzcd}\]
commute, the morphisms $\gamma^i$ induce a morphism $\gamma: \i(c) \to I$.

It remains to show that the morphism $\langle \sigma, \gamma \rangle: \i(c) \to c \times I$ is an isomorphism. We will show that, for every $d = [q](b_1, \ldots, b_q) \in \cc$, the induced morphism
$$\Hom_{\psh(\cc)}(d, \i(c)) \to \Hom_{\psh(\cc)}(d, c \times I)$$
is a bijection.

Let $f: d \to c \times I$, $g = p_1 f: d \to c$ and $h = p_2 f: d \to I$. Since $I = [1](*)$, the map $h$ corresponds to a morphism $[q] \to [1]$ in $\Delta$, which we still denote by $h$. If $h$ sends all objects of $[q]$ to $0$ (resp. 1), consider $f': d \to \i(c)$ defined as the composite $d \xrightarrow[]{g} c \xrightarrow[]{[\delta^{p+1}]} c^p \to \i(c)$ (resp. $d \xrightarrow[]{g} c \xrightarrow[]{[\delta^{0}]} c^0 \to \i(c)$). It is clear that $\langle \sigma, \gamma \rangle f' = f$. Note that the construction of $f'$ is the unique possible one.

If not all objects of $[q]$ are sent to the same object of $[1]$, then there exists $1 \leq j_h \leq q$ such that $h(i) = 0$ for $i < j_h$ and $h(i) = 1$ for $i \geq j_h$. Let $k = g(j_h - 1)$. Consider $f': d \to \i(c)$ defined as the composite $d \xrightarrow{f''} c^{k} \to \i(c)$ where $f''$ is induced by $g$ in the following way: $f''(i) = g(i)$ for $i < j_h$ and $f''(i) = g(i) + 1$ for $i \geq j_h$ (the maps between $b_j$'s and $a_i$'s are those of $g$). We have $\sigma^{k} f'' = g$ and $\gamma^{k} f'' = h$, so $\langle \sigma, \gamma \rangle f' = f$, as desired. This time, the only choice we made was that of $k$. Indeed, we could have chosen any $k$ from $g(j_h -1)$ to $g(j_h)-1$, and produced $f'_k: d \to \i(c)$ factoring through $f''_k: d \to c^k$. Suppose that there are two possible consecutive choices $k$ and $k+1$. Following the constructions, we see that $f''_k = \alpha^k g$ and $f''_{k+1} = \beta^{k+1} g$, and therefore $f''_k$ and $f''_{k+1}$ define the same morphism $f': d \to \i(c)$.

\end{proof}

In the last part of this section, we restrict our attention to the case $\cc = \Theta_n$, for $n >0$ (or $\a = \Theta_{n-1}$, since $\Theta_n = \Delta \wr \Theta_{n-1}$). As before, let $I = [1]([0])$ and $J = N_n(J)$. Recall that $\j(\theta) = \theta \times I \cup \theta_0 \times J$. Let us provide an alternative construction $\j'(\theta)$ for $\j(\theta)$.

\begin{parag} \label{definition theta^{i}_j}
    Let $\theta = [p](\tau_1, \ldots, \tau_p) \in \Theta_n$ and $0 \leq i \leq p$. Let $\theta^{i}_J$ be the presheaf on $\Theta_n$ defined as the following pushout

\[\begin{tikzcd}
	I & {\theta^{i}} \\
	J & {\theta^i_J}
	\arrow["{\{i, i+1\}}", from=1-1, to=1-2]
	\arrow[from=1-1, to=2-1]
	\arrow["{\varphi^{i}}", from=1-2, to=2-2]
	\arrow["{\psi^{i}}"', from=2-1, to=2-2]
	\arrow["\lrcorner"{anchor=center, pos=0.125, rotate=180}, draw=none, from=2-2, to=1-1]
\end{tikzcd}\]
where the vertical arrow is the inclusion $I \to J$ and the top horizontal arrow sends the object $0$ (resp. $1$) of $I$ to the object $i$ (resp. $i+1$) of $\theta^i$. Since the arrows $I \to \theta^{i}$ and $I \to J$ are monomorphisms, the arrows $\varphi^{i}$ and $\psi^{i}$ are also monomorphisms, as pushouts of monomorphisms of presheaves.

Let $\j'(\theta)$ be the colimit of the following diagram in $\tset$
\[\begin{tikzcd}
	& \theta && \theta & \ldots & \theta \\
	{\theta^0} && {\theta^1} &&&& {\theta^p} \\
	{\theta^0_J} && {\theta^1_J} &&&& {\theta^p_J}
	\arrow["{\alpha^0}", from=1-2, to=2-1]
	\arrow["{\beta^1}"', from=1-2, to=2-3]
	\arrow["{\alpha^1}", from=1-4, to=2-3]
	\arrow["{\beta^p}"', from=1-6, to=2-7]
	\arrow["{\varphi^{0}}", from=2-1, to=3-1]
	\arrow["{\varphi^1}", from=2-3, to=3-3]
	\arrow["{\varphi^p}", from=2-7, to=3-7]
\end{tikzcd}\]
\end{parag}

\begin{proposition} \label{prop arternative description of j(theta)}
    For every $\theta \in \Theta_n$, there is an isomorphism
    $$\j(\theta) \cong \j'(\theta)$$
    in $\tset$.
\end{proposition}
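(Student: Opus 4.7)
The strategy is to exhibit $\j'(\theta)$ as a pushout built out of $\i(\theta)$, and then use Proposition~\ref{prop characterisation of cylinder} to recognise the result as $\j(\theta)$.

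First, since each $\theta^{i}_{J}$ is by construction the pushout $\theta^{i} \cup_{I} J$, I would expand these pushouts inside the diagram defining $\j'(\theta)$ and then regroup the colimit: taking the colimit of the subdiagram on the $\theta$'s and the $\theta^{i}$'s alone yields exactly $\i(\theta)$, so that $\j'(\theta)$ is the pushout of the span
\[
\i(\theta) \longleftarrow \bigsqcup_{i=0}^{p} I \longrightarrow \bigsqcup_{i=0}^{p} J,
\]
where the right-hand map is the coproduct of the inclusions $I \hookrightarrow J$ and the left-hand map sends the $i$-th summand $I$ into $\i(\theta)$ through the composite $I \xrightarrow{\{i, i+1\}} \theta^{i} \to \i(\theta)$. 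This reorganisation is purely formal, relying only on the fact that colimits commute with colimits.

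The key geometric step is then to identify, under the isomorphism $\i(\theta) \cong \theta \times I$ of Proposition~\ref{prop characterisation of cylinder}, the composite $I \to \theta^{i} \to \i(\theta)$ above with the inclusion $\{i\} \times I \hookrightarrow \theta \times I$ of the ``vertical'' $I$ over the vertex $i$ of $\theta$. Using the explicit description of the isomorphism as $\langle \sigma, \gamma \rangle$ from the proof of Proposition~\ref{prop characterisation of cylinder}, this reduces to computing $\sigma^{i} \circ \{i, i+1\}$ and $\gamma^{i} \circ \{i, i+1\}$: the first is the constant map at the vertex $i$ of $\theta$ (since $\sigma^{i}$ collapses $i$ and $i+1$ to $i$), while the second is $\id_{I}$ by definition of $\gamma^{i}$.

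To conclude, I would use that $\theta_{0}$ is the constant presheaf at the set of vertices $\{0, 1, \ldots, p\}$, so that $\theta_{0} \times I \cong \bigsqcup_{i=0}^{p} I$ and $\theta_{0} \times J \cong \bigsqcup_{i=0}^{p} J$, and the inclusion $\theta_{0} \times I \hookrightarrow \theta \times I$ agrees summand-wise with the coproduct of vertical inclusions identified in the previous paragraph. The pushout description of $\j'(\theta)$ then matches the defining pushout $\j(\theta) = \theta \times I \cup_{\theta_{0} \times I} \theta_{0} \times J$, yielding the desired isomorphism. The main obstacle is the geometric identification in the third paragraph; everything else is formal manipulation of (iterated) colimits together with the elementary identification $\theta_{0} \times (-) \cong \bigsqcup_{i} (-)$.
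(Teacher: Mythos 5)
Your proof is correct and follows essentially the same route as the paper's: both use commutation of colimits to reduce $\j'(\theta)$ to the pushout of the span $\i(\theta) \leftarrow \coprod_{i} I \to \coprod_{i} J$, and then match this against the defining pushout of $\j(\theta)$ via Proposition \ref{prop characterisation of cylinder}. Your explicit verification that $\langle \sigma, \gamma \rangle$ carries the composite $I \to \theta^{i} \to \i(\theta)$ to the inclusion $\{i\} \times I \hookrightarrow \theta \times I$ (and that $\theta_0 \times I \cong \coprod_i I$ summand-wise) is a compatibility the paper leaves implicit, and you carry it out correctly.
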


\begin{proof}
It follows from the fact that colimits commute with colimits. Let us explicit all the diagrams and functors. If $\theta = [p](\tau_1, \ldots, \tau_p)$, let $W_p$ be the category
\[\begin{tikzcd}
	& {1'} && {2'} & \ldots & {p'} \\
	0 && 1 &&&& p
	\arrow[from=1-2, to=2-1]
	\arrow[from=1-2, to=2-3]
	\arrow[from=1-4, to=2-3]
	\arrow[from=1-6, to=2-7]
\end{tikzcd}\]
and $P$ be the category
\[\begin{tikzcd}
	a & b \\
	c
	\arrow[from=1-1, to=2-1]
	\arrow[from=1-1, to=1-2]
\end{tikzcd}\]

Recall that $\j(\theta)$ is defined as the pushout
\[\begin{tikzcd}
	{\coprod_{0 \leq i \leq p} I} & {\i (\theta)} \\
	{\coprod_{0 \leq i \leq p} J} & {\j(\theta)}
	\arrow[from=1-1, to=2-1]
	\arrow[from=1-1, to=1-2]
	\arrow[from=2-1, to=2-2]
	\arrow[from=1-2, to=2-2]
	\arrow["\lrcorner"{anchor=center, pos=0.125, rotate=180}, draw=none, from=2-2, to=1-1]
\end{tikzcd}\]

We observe that the 3 objects defining this pushout can be seen as colimits indexed by $W_p$. Indeed, the coproduct $\coprod_{0 \leq i \leq p} I$ (resp. $\coprod_{0 \leq i \leq p} J$) is the colimit of $F^{I}: W_p \to \tset$ (resp. $F^J$) sending $j'$ to $\emptyset$ for every $1 \leq j \leq p$ and $i$ to $I$ (resp. $J$), for every $0 \leq i \leq p$. Moreover, $\i(\theta)$ is defined as the colimit of a functor $F^{I}_\theta: W_p \to \tset$. Therefore, consider the functor
$$F: W_p \times P \to \tset$$
whose adjoint 
$$F': P \to [W_p, \tset]$$
sends $a$ to $F^{I}$, $b$ to $F^{I}_{\theta}$ and $c$ to $F^J$. We have that 
$$\j(\theta) = \colim_{W_p} (\colim_P F')$$

The adjoint $$F'': W_p \to [P, \tset]$$ is the functor that sends $j'$ to the diagram
\[\begin{tikzcd}
	\emptyset & \theta \\
	\emptyset
	\arrow[from=1-1, to=1-2]
	\arrow[from=1-1, to=2-1]
\end{tikzcd}\]
for every $1 \leq j \leq p$ and $i$ to the diagram
\[\begin{tikzcd}
	I & {\theta^{i}} \\
	J
	\arrow["{\{i, i+1\}}", from=1-1, to=1-2]
	\arrow[from=1-1, to=2-1]
\end{tikzcd}\]
for every $0 \leq i \leq p$. The images by $F''$ of the arrows of $W_p$ are given by the unique morphisms $\emptyset \to I$ and $\emptyset \to J$ and by the morphisms $\alpha^{i}, \beta^{i}: \theta \to \theta^{i}$. By definition, we have 
$$\j'(\theta) = \colim_{P} (\colim_{W_p} F'')$$
and hence
$$\j(\theta) \cong \j'(\theta)$$
\end{proof}

\begin{parag} \label{contruction of morphism j'(theta) to theta}
    Let us construct a morphism $\mu: \j'(\theta) \to \theta$. Let $0 \leq i \leq p$. Recall from the proof of Proposition \ref{prop characterisation of cylinder} the morphism $\sigma^{i}: \theta^{i} \to \theta$. It fits in the commutative diagram below, inducing an arrow $\mu^{i}: \theta^{i}_J \to \theta$.

\[\begin{tikzcd}
	I & {\theta^{i}} \\
	J & {\theta^{i}_J} \\
	{\{*\}} && \theta
	\arrow["{\sigma^{i}}", curve={height=-12pt}, from=1-2, to=3-3]
	\arrow[from=2-1, to=3-1]
	\arrow["{\{i\}}"', from=3-1, to=3-3]
	\arrow[from=1-1, to=2-1]
	\arrow["{\{i, i+1\}}", from=1-1, to=1-2]
	\arrow[from=1-2, to=2-2]
	\arrow[from=2-1, to=2-2]
	\arrow["\lrcorner"{anchor=center, pos=0.125, rotate=180}, draw=none, from=2-2, to=1-1]
	\arrow["{\mu^{i}}", dashed, from=2-2, to=3-3]
\end{tikzcd}\]

The morphisms $\mu^{i}$ assemble to form a morphism $\mu: \j'(\theta) \to \theta$. It follows from the constructions that $\mu$ fits in a commutative triangle 
\[\begin{tikzcd}
	{\j(\theta)} && {\j'(\theta)} \\
	& \theta
	\arrow["\varphi", from=1-1, to=1-3]
	\arrow["\sigma"', from=1-1, to=2-2]
	\arrow["\mu", from=1-3, to=2-2]
\end{tikzcd}\]
where $\varphi$ is the isomorphism of Proposition \ref{prop arternative description of j(theta)} and $\sigma$ is described in the proof of Proposition \ref{j(X) is an EHD}.
\end{parag}

We now study these cylinder constructions considering the model structure for $n$-quasi-categories on $\tset$.

\begin{proposition} \label{muˆi weak equivalence}
    For $0 \leq i \leq p$, the morphism $\mu^{i}: \theta^{i}_J \to \theta$ is a weak equivalence of $\nqcat$.
\end{proposition}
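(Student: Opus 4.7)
The plan is to compare $\mu^i$ with a map at the level of ``spines'' and to conclude by 2-of-3 after two applications of the gluing lemma. The key inputs are that $\nqcat$ is left proper (all objects are cofibrant, since cofibrations are monomorphisms, cf.\ \cite[Corollary 13.1.3]{hirschhorn2003model}), that $J \to \ast$ is a trivial fibration by \cite[Corollary 6.7]{ara2014higher} (as already invoked in the proof of Proposition \ref{recognition principle for left quillen functors}), and that the spine inclusions $I[\eta] \to \Theta_n[\eta]$ belonging to $\i_n$ are weak equivalences of $\nqcat$.

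Introduce the auxiliary presheaf
\[
I[\theta^i_J] := I[\theta^i] \sqcup_I J,
\]
where the map $I \to I[\theta^i]$ is the inclusion of the globe $\sigma(I[\ast]) = \sigma(D_0) = D_1 = I$ inserted at position $i$ in the spine decomposition of $\theta^i = [p+1](\tau_1, \ldots, \tau_i, \ast, \tau_{i+1}, \ldots, \tau_p)$. Unwinding the inductive definition of $I[-]$ from \S\ref{parag spine inclusions}, the parallel pushout $I[\theta^i] \sqcup_I \ast$ is canonically isomorphic to $I[\theta]$: collapsing the inserted $D_1$ in the defining diagram identifies the two $D_0$'s that glue it to its neighbouring components of the wedge, precisely undoing the insertion and recovering the diagram of $\theta$.

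Applying the gluing lemma to the natural transformation of cospans
\[
\bigl(I[\theta^i] \leftarrow I \to J\bigr) \longrightarrow \bigl(I[\theta^i] \leftarrow I \to \ast\bigr),
\]
whose only non-identity vertical component is the weak equivalence $J \to \ast$, produces a weak equivalence $I[\theta^i_J] \to I[\theta^i] \sqcup_I \ast \cong I[\theta]$. A second application, now to
\[
\bigl(I[\theta^i] \leftarrow I \to J\bigr) \longrightarrow \bigl(\theta^i \leftarrow I \to J\bigr),
\]
whose only non-identity vertical component is the spine inclusion $I[\theta^i] \to \theta^i$ in $\i_n$, produces a weak equivalence $I[\theta^i_J] \to \theta^i_J$.

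These assemble into a commutative square
\[
\begin{tikzcd}
I[\theta^i_J] \arrow[r, "\sim"] \arrow[d, "\sim"'] & \theta^i_J \arrow[d, "\mu^i"] \\
I[\theta] \arrow[r, "\sim"'] & \theta
\end{tikzcd}
\]
whose top, left and bottom arrows are, respectively, the two weak equivalences just constructed and the standard spine inclusion $I[\theta] \to \theta$ of $\i_n$. Commutativity reduces to verifying that the image of the inserted $I$-globe of $I[\theta^i]$ in $\theta$ agrees along both paths: through $\sigma^i$ it becomes the degenerate $D_1$-cell at vertex $i$ of $\theta$, and through the collapse $I[\theta^i] \to I[\theta]$ followed by the spine inclusion it factors through $I \to \ast$ to the same degenerate edge. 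By 2-of-3, $\mu^i$ is a weak equivalence. The main obstacle is the combinatorial verification that $I[\theta^i] \sqcup_I \ast \cong I[\theta]$ — though routine, it requires carefully tracking the wedge structure in the spine.
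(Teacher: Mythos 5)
Your proof is correct and is essentially the paper's own argument: the paper likewise factors $I \to \theta^i$ through the spine inclusion $I[\theta^i] \to \theta^i$, forms the same intermediate pushout $I[\theta^i] \sqcup_I J$ (called $A$ there), implicitly uses the identification $I[\theta^i] \sqcup_I \ast \cong I[\theta]$, and concludes by left properness (your gluing-lemma steps) together with 2-out-of-3 against the spine inclusion $I[\theta] \to \theta$. The only difference is packaging: the paper routes the final 2-out-of-3 through one further auxiliary pushout $B$ rather than checking commutativity of your square directly, which amounts to the same verification.
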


\begin{proof}
Consider the commutative diagram below

\[\begin{tikzcd}
	I & {I[\theta^{i}]} & {\theta^{i}} \\
	J & A & {\theta^{i}_J} \\
	{\{*\}} & {I[\theta]} & B \\
	&&& \theta
	\arrow[from=1-1, to=1-2]
	\arrow[from=1-1, to=2-1]
	\arrow["\sim"', from=2-1, to=3-1]
	\arrow[from=1-2, to=2-2]
	\arrow[from=2-1, to=2-2]
	\arrow[from=3-1, to=3-2]
	\arrow[from=2-2, to=3-2]
	\arrow["\lrcorner"{anchor=center, pos=0.125, rotate=180}, draw=none, from=2-2, to=1-1]
	\arrow["\lrcorner"{anchor=center, pos=0.125, rotate=180}, draw=none, from=3-2, to=2-1]
	\arrow["\sim", from=1-2, to=1-3]
	\arrow[from=1-3, to=2-3]
	\arrow[from=2-2, to=2-3]
	\arrow["\lrcorner"{anchor=center, pos=0.125, rotate=180}, draw=none, from=2-3, to=1-2]
	\arrow[from=3-2, to=3-3]
	\arrow[from=2-3, to=3-3]
	\arrow["\lrcorner"{anchor=center, pos=0.125, rotate=180}, draw=none, from=3-3, to=2-2]
	\arrow["{\sigma^i}", curve={height=-12pt}, from=1-3, to=4-4]
	\arrow["{\{i\}}"', curve={height=12pt}, from=3-1, to=4-4]
	\arrow[dashed, from=3-3, to=4-4]
	\arrow["{\mu^{i}}", from=2-3, to=4-4]
	\arrow["\sim"', from=3-2, to=4-4]
\end{tikzcd}\]
where the upper rectangle
\[\begin{tikzcd}
	\bullet & \bullet & \bullet \\
	\bullet & \bullet & \bullet
	\arrow[from=1-1, to=1-2]
	\arrow[from=1-2, to=1-3]
	\arrow[from=2-1, to=2-2]
	\arrow[from=2-2, to=2-3]
	\arrow[from=1-1, to=2-1]
	\arrow[from=1-2, to=2-2]
	\arrow[from=1-3, to=2-3]
\end{tikzcd}\]
is the pushout defining $\theta^{i}_J$, where we factorised the arrow $I \to \theta^{i}$ through the spine inclusion $I[\theta^{i}] \to \theta^{i}$, which is a weak equivalence of $\nqcat$ by definition. The morphisms $J \to \{*\}$ and $I[\theta] \to \theta$ are also weak equivalences by definition of $\nqcat$. The objects $A$ and $B$ are defined as the respective pushouts, and we will not need to compute them explicitly.

Since $I \to I[\theta^{i}]$ and $I \to J$ are monomorphisms (equivalently, cofibrations of $\nqcat$), the arrows $I[\theta^{i}] \to A$ and $J \to A$ are also cofibrations, since cofibrations are stable by pushout. Moreover, the arrow $A \to \theta^{i}_J$ is a trivial cofibration, as a pushout of the trivial cofibration $I[\theta^{i}] \to \theta^{i}$. 

Since $\nqcat$ is left proper, we have that that $A \to I[\theta]$ and $\theta^{i}_J \to B$ are weak equivalences. The 2-out-of-3 property of weak equivalences then implies that $I[\theta] \to B$ and $B \to \theta$ are weak equivalences, and thus that $\mu^{i}: \theta^{i}_J \to \theta$ is a weak equivalence, as desired.
\end{proof}

\begin{theorem} \label{mu is a weak equivalence}
    For every $\theta \in \Theta_n$, the morphism $\mu: \j'(\theta) \to \theta$ is a weak equivalence of $\nqcat$.
\end{theorem}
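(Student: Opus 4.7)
The plan is to realise $\j'(\theta)$ as a telescope of pushouts starting from $\theta^0_J$, at each step gluing on some $\theta^i_J$ along a trivial cofibration issued from $\theta$. Combined with Proposition \ref{muˆi weak equivalence} and 2-out-of-3, this will give that $\mu$ is a weak equivalence.

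First, I would verify that for $0 \leq i \leq p-1$ the morphism $\varphi^i \alpha^i : \theta \to \theta^i_J$ is a trivial cofibration, and analogously that $\varphi^i \beta^i : \theta \to \theta^i_J$ is a trivial cofibration for $1 \leq i \leq p$. They are monomorphisms (hence cofibrations of $\nqcat$) because $\alpha^i, \beta^i$ are induced by injective maps in $\Delta$ and $\varphi^i$ is a monomorphism as a pushout of $I \hookrightarrow J$. For the weak equivalence property, the identities $\sigma^i \alpha^i = \id_\theta = \sigma^i \beta^i$ shown in the proof of Proposition \ref{prop characterisation of cylinder}, together with $\mu^i \varphi^i = \sigma^i$ from §\ref{contruction of morphism j'(theta) to theta}, exhibit $\varphi^i \alpha^i$ and $\varphi^i \beta^i$ as sections of $\mu^i$. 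Since $\mu^i$ is a weak equivalence by Proposition \ref{muˆi weak equivalence}, 2-out-of-3 forces these sections to be weak equivalences as well.

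Next, I would compute $\j'(\theta)$ as an iterated pushout. Set $Z_0 := \theta^0_J$ and, for $1 \leq i \leq p$, inductively
\[
Z_i := Z_{i-1} \sqcup_{\theta} \theta^i_J,
\]
where the attaching maps are the composite $\theta \xrightarrow{\varphi^{i-1} \alpha^{i-1}} \theta^{i-1}_J \hookrightarrow Z_{i-1}$ and $\varphi^i \beta^i : \theta \to \theta^i_J$. Reassembling the colimit defining $\j'(\theta)$ from left to right yields a canonical isomorphism $Z_p \cong \j'(\theta)$. Since trivial cofibrations are stable under pushout and under composition, the inclusion $\theta^0_J = Z_0 \to Z_p = \j'(\theta)$ is a trivial cofibration.

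To conclude, by construction the restriction of $\mu$ along $\theta^0_J \hookrightarrow \j'(\theta)$ coincides with $\mu^0$, which is a weak equivalence by Proposition \ref{muˆi weak equivalence}. Applying 2-out-of-3 to the triangle $\theta^0_J \to \j'(\theta) \xrightarrow{\mu} \theta$ gives that $\mu$ is a weak equivalence. I expect the main obstacle to be Step 1, namely identifying the correct sections $\varphi^i \alpha^i$ and $\varphi^i \beta^i$ of $\mu^i$ and checking that they are monomorphisms, since this is the bridge turning the pointwise statement of Proposition \ref{muˆi weak equivalence} into one about the colimit $\j'(\theta)$.
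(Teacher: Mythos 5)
Your proof is correct, but it takes a genuinely different route from the paper's. The paper treats $\mu$ as the map induced on colimits by a levelwise weak equivalence of $W_p$-shaped diagrams: the diagram computing $\j'(\theta)$ maps to the constant diagram at $\theta$ via identities and the maps $\mu^{i}$, and since in both diagrams all objects are cofibrant and all arrows are cofibrations, the colimits are homotopy colimits (this is the one external input, borrowed from a rigidification argument), so the induced map is a weak equivalence. You instead turn the zigzag colimit into a finite telescope of pushouts: the composites $\varphi^{i}\alpha^{i}$ and $\varphi^{i}\beta^{i}$ are sections of $\mu^{i}$, hence weak equivalences by 2-out-of-3 once one knows each $\mu^{i}$ is a weak equivalence, and they are monomorphisms, hence trivial cofibrations; then $\theta^{0}_J \to \j'(\theta)$ is a composite of cobase changes of the trivial cofibrations $\varphi^{i}\beta^{i}$, and 2-out-of-3 applied to $\mu$ restricted along this coprojection, which is $\mu^{0}$, concludes. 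Both arguments rest on the same key input (the weak equivalences $\mu^{i}$), but yours is more elementary and self-contained, using only stability of trivial cofibrations under pushout and composition instead of the homotopy-colimit machinery; it also transfers just as well to the proof of Theorem \ref{recognition principle for left quillen functors 2}, since a cocontinuous functor sending monomorphisms to cofibrations preserves the telescope and the section relations. Two minor touch-ups: the cleanest reason why $\alpha^{i}$ and $\beta^{i}$ are monomorphisms of presheaves is that they are split monomorphisms, retracted by $\sigma^{i}$ as in the proof of Proposition \ref{prop characterisation of cylinder}, rather than an appeal to injectivity in $\Delta$; and in fact only the $\beta$-legs need to be trivial cofibrations, the $\alpha$-legs merely serving as attaching maps.
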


\begin{proof}
   The morphism $\mu: \j'(\theta) \to \theta$ is induced by a morphism of diagrams $\W_p \to \tset$. Indeed, $\j'(\theta)$ is a colimit of a functor $F^{J}_\theta: W_p \to \tset$ by definition. We can see $\theta$ as such a colimit by considering $F_\theta: W_p \to \tset$ sending all objects of $W_p$ to $\theta$ and all arrows to the identity of $\theta$. The morphism of diagrams $F^{J}_\theta \to F_\theta$ is induced by the identities $\theta \to \theta$ (in particular, weak equivalences) and by the arrows $\mu^{i}: \theta^{i}_J \to \theta$, which are weak equivalences by Proposition \ref{muˆi weak equivalence}. 

   In the diagrams given by $F^J_\theta$ and by $F_\theta$, all objects are cofibrant and all arrows are cofibrations, so their colimits are in fact homotopy colimits (a proof of this fact for diagrams of shape $W_p$ can be found in the proof of \cite[Proposition B.2.1]{curien2022rigidification}, under the first diagram appearing in the proof). Since homotopy colimits preserve weak equivalences of diagrams, the induced arrow $\mu: \j'(\theta) \to \theta$ is a weak equivalence.
\end{proof}

\begin{corollary} \label{sigma theta is a weak equivalence}
For every $\theta \in \Theta_n$, the morphism $\sigma: \j(\theta) \to \theta$ is a weak equivalence in $\nqcat$.
\end{corollary}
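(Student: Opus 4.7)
The plan is to deduce this immediately from Theorem \ref{mu is a weak equivalence} together with the commutative triangle exhibited at the end of §\ref{contruction of morphism j'(theta) to theta}. That triangle reads
\[
\begin{tikzcd}
	{\j(\theta)} && {\j'(\theta)} \\
	& \theta
	\arrow["\varphi", from=1-1, to=1-3]
	\arrow["\sigma"', from=1-1, to=2-2]
	\arrow["\mu", from=1-3, to=2-2]
\end{tikzcd}
\]
and factors $\sigma$ as $\mu \circ \varphi$. By Proposition \ref{prop arternative description of j(theta)}, the morphism $\varphi$ is an isomorphism in $\tset$, hence in particular a weak equivalence of $\nqcat$. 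By Theorem \ref{mu is a weak equivalence}, the morphism $\mu$ is a weak equivalence of $\nqcat$. The class of weak equivalences is closed under composition (equivalently, satisfies the 2-out-of-3 property), so $\sigma = \mu \circ \varphi$ is a weak equivalence of $\nqcat$, as claimed.

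There is no real obstacle here: all the substantive work has already been done in establishing that $\j(\theta)$ and $\j'(\theta)$ are canonically isomorphic (via the colimit-of-colimits computation of Proposition \ref{prop arternative description of j(theta)}) and in proving that the combinatorial cylinder $\j'(\theta)$ retracts onto $\theta$ up to weak equivalence (Proposition \ref{muˆi weak equivalence} combined with the homotopy colimit argument of Theorem \ref{mu is a weak equivalence}). The corollary is merely the transport of the conclusion along the identification $\varphi$.
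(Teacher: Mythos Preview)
Your proof is correct and matches the paper's own argument: apply the 2-out-of-3 property (or simply closure under composition) to the commutative triangle of §\ref{contruction of morphism j'(theta) to theta}, using that $\varphi$ is an isomorphism and $\mu$ is a weak equivalence by Theorem \ref{mu is a weak equivalence}.
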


\begin{proof}
    Apply the 2-out-of-3 property of weak equivalences to the commutative triangle of §\ref{contruction of morphism j'(theta) to theta}.
\end{proof}

\begin{corollary} \label{sigma X is a weak equivalence}
    For every $X \in \tset$, the morphism $\sigma_X: \j(X) \to X$ is a weak equivalence in $\nqcat$.
\end{corollary}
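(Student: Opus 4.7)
The plan is to reduce to the representable case handled by Corollary \ref{sigma theta is a weak equivalence} via a cellular induction. This will exploit two key facts: that $\j$ preserves small colimits and monomorphisms (Proposition \ref{j(X) is an EHD}), and that $\nqcat$ is left proper (since every presheaf is cofibrant, cofibrations being the monomorphisms).

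The first step is to present $X$ via its canonical skeletal filtration, using that $\Theta_n$ is a \textit{catégorie squelettique régulière} in the sense of \cite[Definition 8.2.3]{cisinski2006prefaisceaux}: $X$ is the transfinite composition of a sequence $\sk_0 X \to \sk_1 X \to \cdots$, where each $\sk_k X$ is obtained from $\sk_{k-1} X$ by a pushout of a coproduct of boundary inclusions $\delta_\theta : \partial \Theta_n[\theta] \to \Theta_n[\theta]$ indexed by the nondegenerate $k$-cells of $X$. Since $\j$ commutes with colimits and preserves monomorphisms, applying $\j$ yields an analogous skeletal filtration of $\j(X)$ in which each attaching map $\j(\delta_\theta): \j(\partial \Theta_n[\theta]) \to \j(\Theta_n[\theta])$ is again a cofibration in $\nqcat$. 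The natural transformation $\sigma$ then compares these two filtered diagrams.

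The second step is a transfinite induction on dimension showing that $\sigma_X$ is a weak equivalence. The base case of a representable $\Theta_n[\theta]$ is Corollary \ref{sigma theta is a weak equivalence}. For a boundary $\partial \Theta_n[\theta]$, which is itself a cell complex built from representables of strictly lower dimension, $\sigma_{\partial \Theta_n[\theta]}$ is a weak equivalence by the inductive hypothesis. At each inductive step, the comparison of the pushouts defining $\sk_k X$ and $\sk_k \j(X)$ is handled by the cube lemma for left proper model categories: $\sigma$ is a weak equivalence on the three non-target corners (by induction together with the representable case), and one of the legs of each pushout is a cofibration (since $\j$ preserves monomorphisms), so the induced map on pushouts is a weak equivalence. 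Transfinite composition of such weak equivalences between cofibrant sequences with cofibrations as structure maps again yields a weak equivalence, completing the induction.

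The one subtle point is ensuring that the induction hypothesis cleanly applies to $\partial \Theta_n[\theta]$, which requires precisely that the skeletal decomposition of a boundary involve only cells of strictly smaller dimension than $\theta$; this is the content of regularity of the skeletal category $\Theta_n$. Once this is granted, the rest is formal from Proposition \ref{j(X) is an EHD} and left properness.
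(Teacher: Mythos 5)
Your argument is correct, but it takes a different route from the paper: the paper disposes of this corollary in two lines by invoking Cisinski's machinery, namely \cite[Proposition 8.2.15]{cisinski2006prefaisceaux} applied to the pre-localizer $\W$ of weak equivalences of $\nqcat$, using that $\Theta_n$ is a \emph{catégorie squelettique régulière}, that $\j$ is an EHD (Proposition \ref{j(X) is an EHD}), and that Corollary \ref{sigma theta is a weak equivalence} supplies exactly the hypothesis on representables. What you propose is essentially a self-contained unwinding of the skeletal induction hidden inside that citation: present $X$ by its skeletal filtration, use that $\j$ preserves colimits and monomorphisms to transport the filtration, and propagate the representable case through coproducts, pushouts along cofibrations (gluing lemma, available since $\nqcat$ is left proper and every object is cofibrant) and sequential colimits along cofibrations. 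This buys transparency and independence from Cisinski's text, at the cost of length and of a few standard facts you should make explicit: that a coproduct of weak equivalences between cofibrant objects is a weak equivalence (needed for the corner $\coprod_\theta \Theta_n[\theta]$ and for the $0$-skeleton), and that a levelwise weak equivalence between sequences of cofibrations of cofibrant objects induces a weak equivalence on colimits. One point to state more carefully: the induction cannot run only along the skeleta of the fixed $X$; it must be the statement \enquote{$\sigma_Y$ is a weak equivalence for \emph{every} presheaf $Y$ of dimension $\leq k$}, since the corner $\coprod_\theta \partial\Theta_n[\theta]$ is an auxiliary presheaf of dimension $\leq k-1$ rather than a skeleton of $X$. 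You flag this subtlety yourself and it is indeed where regularity of the skeletal structure of $\Theta_n$ enters (proper subobjects of a $k$-dimensional representable have strictly smaller dimension, and nondegenerate cells are attached along their boundaries); with the hypothesis quantified this way the argument goes through. Both proofs ultimately rest on the same two inputs, Proposition \ref{j(X) is an EHD} and Corollary \ref{sigma theta is a weak equivalence}.
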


\begin{proof}
    We apply \cite[Proposition 8.2.15]{cisinski2006prefaisceaux}, using the fact that $\Theta_n$ is a \textit{catégorie squelettique régulière} and that $\j(X)$ is an EHD (cf. Proposition \ref{j(X) is an EHD}). We take $\W$ as the class of weak equivalences of $\nqcat$, which is a (pre-)localizer, and Corollary \ref{sigma theta is a weak equivalence} is exactly the hypothesis that $\sigma: \j(\theta) \to \theta$ is in $\W$ for every $\theta \in \Theta_n$.
\end{proof}

Recall from Paragraphs \ref{parag spine inclusions} and \ref{parag equivalences} the definitions of the classes $\i_n$ and $\j_n$. The following theorem is a refinement of the recognition principle given in Proposition \ref{recognition principle for left quillen functors}.
 
\begin{theorem} \label{recognition principle for left quillen functors 2}
    Let $\m$ be a model category. Let $F: \tset \to \m$ be a cocontinuous functor sending monomorphisms to cofibrations. Then $F$ sends weak equivalences of $\nqcat$ to weak equivalences of $\m$ if and only if $F$ sends the morphisms in $\i_n \cup \j_n$ to weak equivalences.
\end{theorem}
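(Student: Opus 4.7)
The plan is to follow the strategy of the proof of Proposition \ref{recognition principle for left quillen functors}, but to replace the elementary homotopical datum given by the separating interval cylinder $N_n(J) \times (-)$ with the cylinder $\j$ constructed in this section; this is what will allow us to drop the hypothesis on $\p_n$. The easy direction is unchanged: morphisms in $\i_n$ are weak equivalences of $\nqcat$ by definition, and those in $\j_n$ are trivial fibrations by \cite[Corollary 6.7]{ara2014higher}, so they are sent to weak equivalences of $\m$ whenever $F$ preserves weak equivalences.

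For the nontrivial direction, let $\W$ be the class of morphisms $f$ of $\widehat{\Theta_n}$ such that $F(f)$ is a weak equivalence of $\m$. As in the original proof, I would apply \cite[Proposition 8.2.15]{cisinski2006prefaisceaux} to the pre-localizer $\W$ and the \emph{catégorie squelettique régulière} $\Theta_n$, to conclude that $\W$ is a localizer containing the weak equivalences of $\nqcat$. This time the EHD is $\j$ (verified in Proposition \ref{j(X) is an EHD}), so the hypothesis to check is that $F(\sigma_\theta): F(\j(\theta)) \to F(\theta)$ is a weak equivalence in $\m$ for every representable $\theta \in \Theta_n$. Through the isomorphism $\j(\theta) \cong \j'(\theta)$ of Proposition \ref{prop arternative description of j(theta)} and the commutative triangle of §\ref{contruction of morphism j'(theta) to theta}, this reduces to showing that $F(\mu): F(\j'(\theta)) \to F(\theta)$ is a weak equivalence.

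Next I would transport the argument of Theorem \ref{mu is a weak equivalence} to $\m$. Since $F$ is cocontinuous and sends monomorphisms to cofibrations, every object of the form $F(X)$ is cofibrant, and every map in the $W_p$-indexed diagrams defining $F(\j'(\theta))$ and $F(\theta)$ (the latter viewed as the constant diagram) is a cofibration; hence these colimits compute homotopy colimits, and it suffices to show that the induced map of diagrams is a levelwise weak equivalence, i.e., that $F(\mu^i): F(\theta^i_J) \to F(\theta)$ is a weak equivalence in $\m$ for every $0 \leq i \leq p$.

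The hard part is establishing this last point, which amounts to adapting the proof of Proposition \ref{muˆi weak equivalence} to $\m$. Applying $F$ to the pushout diagram there, the spine inclusions become trivial cofibrations in $\m$ (cofibrations by hypothesis, weak equivalences by the $\i_n$ assumption), so pushouts along them remain weak equivalences without further assumption on $\m$. The delicate step is the pushout of $F(J) \to F(\{*\})$---a weak equivalence by the $\j_n$ assumption, but not a cofibration---along the cofibration $F(J) \to F(A)$, and the subsequent pushout forming $F(B)$; here the proof of Proposition \ref{muˆi weak equivalence} uses left properness of $\nqcat$, and we analogously need $\m$ to be left proper to conclude. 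In every application of this theorem in the paper $\m$ is a Bousfield localisation of $\nqcat$ or of $\thetansp$ and hence left proper, so this poses no problem: under that assumption the proof transports verbatim, and combined with 2-out-of-3 yields $F(\mu^i)$ a weak equivalence, completing the argument.
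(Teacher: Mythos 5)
Your proposal follows the paper's proof almost step for step: the same easy direction, the same appeal to \cite[Proposition 8.2.15]{cisinski2006prefaisceaux} with the EHD $\j$ of Proposition \ref{j(X) is an EHD} replacing $N_n(J) \times (-)$, and the same reduction through $\j'(\theta) \cong \j(\theta)$ (Proposition \ref{prop arternative description of j(theta)}) and the homotopy colimit over $W_p$ (as in Theorem \ref{mu is a weak equivalence}) to the claim that each $F(\mu^{i}): F(\theta^{i}_J) \to F(\theta)$ is a weak equivalence. The one real divergence is your handling of the two pushouts where the paper's argument for $\mu^{i}$ invokes left properness of $\nqcat$: you conclude that one must additionally assume $\m$ left proper, which is not a hypothesis of the theorem, so as written you only prove a weaker statement than the one asked for.

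That extra hypothesis is not needed, and the repair uses an observation you already made for the homotopy colimit step: since $F$ is cocontinuous it preserves the initial object, and $\emptyset \to X$ is a monomorphism for every $X$, so every object in the image of $F$ is cofibrant. The two delicate steps are then pushouts of weak equivalences between cofibrant objects (first $F(J) \to F(\{*\})$, then $F(A) \to F(I[\theta])$) along cofibrations (the images of the monomorphisms $J \to A$ and $A \to \theta^{i}_J$), and in \emph{any} model category such pushouts are weak equivalences: this is the cofibrant-object form of left properness, \cite[Proposition 13.1.2]{hirschhorn2003model}, the very result behind \cite[Corollary 13.1.3]{hirschhorn2003model} which makes $\nqcat$ itself left proper. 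Substituting this lemma for left properness of $\m$ turns your argument into a complete proof of the stated theorem for arbitrary $\m$. To your credit, the point you flagged is genuine: the paper's instruction to ``apply the same reasoning'' as for $\mu^{i}$ silently requires exactly this replacement, since the literal transport of that proof would ask for left properness of $\m$. (Incidentally, one of the two applications in the paper is the coslice $* \sqcup * / \npqcat$ rather than a Bousfield localisation, though it too is left proper, so your fallback would have sufficed for the paper's purposes --- but the statement as given is stronger.)
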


\begin{proof}
    The direct implication is clear, since the morphisms in $\i_n$ and $\j_n$ are weak equivalences by definition.

    For the converse implication, we use once again \cite[Proposition 8.2.15]{cisinski2006prefaisceaux}. The proof follows exactly as the second paragraph of the proof of Proposition \ref{recognition principle for left quillen functors}, the only difference is that we exchange the EHD $J \times \theta$ by the EHD $\j(\theta)$. 
    
    Therefore, it suffices to show that $F$ sends the morphims $\sigma: \j(\theta) \to \theta$ to weak equivalences of $\m$. Since, by hypothesis, the functor $F$ preserves pushouts, and sends spine inclusions and the projection $J \to \{*\}$ to weak equivalences of $\m$, we can apply the same reasoning of the proof of Proposition \ref{muˆi weak equivalence} to show that $F(\mu^{i}): F(\theta^{i}_J) \to F(\theta)$ is a weak equivalence of $\m$. Thus, the equivalence of diagrams $F^{J}_\theta \to F_\theta$ which induces the weak equivalence $\mu: \j'(\theta) \to \theta$ of Theorem \ref{mu is a weak equivalence} provides an equivalence of diagrams $F(F^{J}_\theta) \to F(F_\theta)$. Since $F$ preserves homotopy colimits, the same argument of the proof Theorem \ref{mu is a weak equivalence} applies, showing that $F(\mu): F(\j'(\theta)) \to F(\theta)$ is a weak equivalence, and hence that $F(\sigma)$ is a weak equivalence.
\end{proof}

\section{Truncated objects}

In this section, we recall Rezk's definition of $(n+k, n)$-$\Theta$-spaces and we define $(n+k)$-truncated $n$-quasi-categories, generalizing the definitions of \cite{joyal2008notes} for $n = 1$ and \cite{campbell2020homotopy} for $(n = 2, k =0)$. It is shown in \cite{campbell2020truncated} and \cite{campbell2020homotopy} that the adjunctions (\ref{adj ara rezk 1}) and (\ref{adj ara rezk 2}) induce Quillen equivalences between $(n+k, n)$-$\Theta$-spaces and $(n+k)$-truncated $n$-quasi-categories, for $(n = 1, k \geq 0)$ and $(n = 2, k = 0)$, respectively. Using the same techniques, we extend this results to all $n \geq 1$, $k \geq 0$, constructing a model structure for  $(n+k)$-truncated $n$-quasi-categories and showing that it is Quillen equivalent to the one for $(n+k, n)$-$\Theta$-spaces via the same two adjunctions.

\begin{parag} \label{homotopy k-types}
    Let $m \geq 0$. A Kan complex $X$ is a \textit{homotopy $k$-type} if it has the right lifting property with respect to the boundary inclusions $\partial \Delta[m] \to \Delta[m]$, for all $m \geq k+2$. Equivalently, a homotopy $k$-type is a Kan complex such that for all $x \in X_0$, the homotopy groups $\pi_m(X, x)$ are trivial for every $m > k$.
\end{parag}

\begin{parag} \label{hom-kan complex of a quasi-category}
    If $X$ is a quasi-category, and $x, y \in X_0$, the Kan complex $\Hom_X(x, y)$ is defined as the pullback
\[\begin{tikzcd}
	{\Hom_X(x, y)} & {X^{\Delta[1]}} \\
	{\{(x,y)\}} & {X^{\partial \Delta[1]}}
	\arrow[from=2-1, to=2-2]
	\arrow[from=1-2, to=2-2]
	\arrow[from=1-1, to=2-1]
	\arrow[from=1-1, to=1-2]
	\arrow["\lrcorner"{anchor=center, pos=0.125}, draw=none, from=1-1, to=2-2]
\end{tikzcd}\]
where the right vertical arrow is induced by the boundary inclusion $\partial \Delta[1] \to \Delta[1]$.
\end{parag}

\begin{parag} \label{def 1-truncated}
Let $k \geq 0$. Recall from \cite[§26]{joyal2008notes} that a (1-)quasi-category $X$ is \textit{$(1+k)$-truncated} if for every $x, y \in X_0$, the Kan complex $\Hom_X(x,y)$ is a homotopy $k$-type.
\end{parag}

\begin{parag} \label{adj suspension}
    For every $n \geq 1$, the suspension functor $\sigma: \Theta_n \to \Theta_{n+1}$ induces an adjunction
    \begin{equation} \label{adj suspension eq}
        \Sigma: \psh(\Theta_n) \rightleftarrows  * \sqcup * / \psh(\Theta_{n+1}) : \Hom
    \end{equation}
    between presheaves on $\Theta_n$ and bipointed presheaves on $\Theta_{n+1}$. Indeed, composing $\sigma$ with the Yoneda embedding $\Theta_{n+1} \to \psh(\Theta_{n+1})$, we get a functor $\Theta_{n} \to \psh(\Theta_{n+1})$ which factorises via the forgetful functor $* \sqcup * / \psh(\Theta_{n+1}) \to \psh(\Theta_{n+1})$. The functor $\hat{\sigma}: \Theta_{n} \to * \sqcup * / \psh(\Theta_{n+1})$ sends $\theta \in \Theta_{n}$ to $(\Theta_{n+1}[\sigma(\theta)], 0, 1)$. The adjunction (\ref{adj suspension eq}) is the Kan extension-nerve adjunction induced by this functor, i.e., $\Sigma := \hat{\sigma}_{!}$ and $\Hom := \hat{\sigma}^!$. 
    
    Explicitly, for $X \in \tset$ and $\theta \in \Theta_{n-1}$, the value of $\Hom_X(x,y) := \Hom(X, x, y)$ at $\theta$ is given by the pullback
\[\begin{tikzcd}
	{\Hom_X(x, y)_\theta} & {X_{\sigma(\theta)}} \\
	{\{(x,y)\}} & {X_0 \times X_0}
	\arrow[from=2-1, to=2-2]
	\arrow[from=1-2, to=2-2]
	\arrow[from=1-1, to=2-1]
	\arrow[from=1-1, to=1-2]
	\arrow["\lrcorner"{anchor=center, pos=0.125}, draw=none, from=1-1, to=2-2]
\end{tikzcd}\]
where the right vertical arrow is induced by the two morphisms $[0] \to \sigma(\theta)$ in $\Theta_n$.
\end{parag}

\begin{proposition} \label{quillen adj suspension}
The adjunction (\ref{adj suspension eq}) is a Quillen adjunction
$$\Sigma: \nqcat \rightleftarrows * \sqcup * / \npqcat: \Hom$$
\end{proposition}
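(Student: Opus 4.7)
The plan is to apply the refined recognition principle of Theorem \ref{recognition principle for left quillen functors 2} with target model category $\m = * \sqcup * / \npqcat$. Since cofibrations in $\npqcat$ are the monomorphisms, the same holds in the slice category (where cofibrations are detected by the forgetful functor to $\npqcat$). To conclude that $\Sigma$ is left Quillen, it will thus suffice to check three conditions: (i) $\Sigma$ is cocontinuous, (ii) $\Sigma$ preserves monomorphisms, and (iii) $\Sigma$ sends every morphism of $\i_n \cup \j_n$ to a weak equivalence in $*\sqcup*/\npqcat$. Given (ii), the conclusion of Theorem \ref{recognition principle for left quillen functors 2} upgrades (iii) to preservation of all weak equivalences of $\nqcat$, which together with (ii) yields preservation of trivial cofibrations.

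Point (i) is immediate as $\Sigma$ is a left adjoint. For (ii), cocontinuity reduces the problem to the boundary inclusions $\delta_\theta: \partial \Theta_n[\theta] \to \Theta_n[\theta]$, since monomorphisms in $\tset$ are generated by these under transfinite composition and pushout. That $\Sigma(\delta_\theta)$ is a monomorphism then follows from the explicit description $\Sigma(\Theta_n[\theta]) \cong (\Theta_{n+1}[\sigma(\theta)], 0, 1)$ together with the observation that $\Sigma(\partial \Theta_n[\theta])$ embeds as the subobject of $\Theta_{n+1}[\sigma(\theta)]$ generated by the proper subrepresentables.

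For (iii), weak equivalences in the slice category are detected by the forgetful functor to $\npqcat$, so I would work there. The generating equivalences $N_n(j_k) \in \j_n$ are the easier case: the recursive definitions $J_k = \sigma(J_{k-1})$ and $D_k = \sigma(D_{k-1})$, combined with the expected compatibility between the strict nerve and the categorical suspension, identify $\Sigma(N_n(j_k))$ with $N_{n+1}(j_{k+1}) \in \j_{n+1}$, which is a weak equivalence of $\npqcat$ by definition. For spine inclusions $i_\theta \in \i_n$, cocontinuity of $\Sigma$ together with the inductive construction of $I[\theta]$ should identify $\Sigma(i_\theta)$ with the spine inclusion $I[[1](\theta)] \to \Theta_{n+1}[[1](\theta)]$ in $\Theta_{n+1}$, which is again a weak equivalence of $\npqcat$ by definition.

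The most delicate step is this compatibility of $\Sigma$ with the spine construction. One has to trace through the recursive definition of $I[\theta]$ and verify that forming the colimit in the slice category $*\sqcup*/\psh(\Theta_{n+1})$ after applying $\Sigma$ yields the spine $I[[1](\theta)]$ of the suspended object, with the two basepoints identified by the slice structure matching precisely the source and target of $[1](\theta)$. This should rest on the facts that the suspension functor $\sigma: \Theta_n \to \Theta_{n+1}$ sends the source and target morphisms $s, t: D_{k-1} \to D_k$ to the analogous morphisms in $\Theta_{n+1}$, and that it sends $D_0$ to $D_1$ bipointed by its two endpoints, so that the gluing data of $I[\theta]$ transform correctly into those of $I[[1](\theta)]$.
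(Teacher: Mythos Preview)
Your approach is essentially identical to the paper's: apply Theorem~\ref{recognition principle for left quillen functors 2} to $\Sigma$, using that it is cocontinuous (left adjoint), preserves monomorphisms, and sends $\i_n$ and $\j_n$ into $\i_{n+1}$ and $\j_{n+1}$ respectively (via $\Sigma(i_\theta)=i_{\sigma(\theta)}$ and $\Sigma(N_n(j_k))=N_{n+1}(j_{k+1})$). The paper's proof is a terse three lines asserting exactly these points; your proposal expands on the monomorphism-preservation step and on why $\Sigma(I[\theta])\cong I[\sigma(\theta)]$ in the slice category, which the paper leaves implicit. Both arguments are correct, and there is no substantive difference in strategy.
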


\begin{proof}
The functor $\Sigma$ is cocontinuous (as a left adjoint) and preserves monomorphisms. Therefore, it suffices by Theorem \ref{recognition principle for left quillen functors 2} to show that it sends the morphisms of the classes $\i_n$ and $\j_n$ to weak equivalences. The spine inclusions $i_\theta$ are sent to the spine inclusions $i_{\sigma(\theta)}$, and the morphisms $j_k$ of $\j_n$ are sent to the morphisms $j_{k+1}$ of $\j_{n+1}$, so we are done. 
\end{proof}

\begin{definition}  \label{def truncated n-qcat}
Let $n \geq 2$, $k \geq 0$. An $n$-quasi-category $X$ is \textit{$(n+k)$-truncated} if for every $x,y \in X_0$ the $(n-1)$-quasi-category $\Hom_X(x,y)$ is $(n-1+k)$-truncated.
\end{definition}

\begin{parag} \label{parag mapping spaces for rezk model}
    Let $Z \in \stset$, and $x, y$ be two points of the space $Z(0)$. Rezk defines the mapping object $M_Z(x,y)$ to be the simplicial presheaf on $\Theta_{n-1}$ whose space at $\theta \in \Theta_{n-1}$ is given by the pullback
\[\begin{tikzcd}
	{M_Z(x,y)(\theta)} & {Z(\sigma(\theta))} \\
	{\{(x,y)\}} & {Z(0) \times Z(0)}
	\arrow[from=1-2, to=2-2]
	\arrow[from=2-1, to=2-2]
	\arrow[from=1-1, to=2-1]
	\arrow[from=1-1, to=1-2]
	\arrow["\lrcorner"{anchor=center, pos=0.125}, draw=none, from=1-1, to=2-2]
\end{tikzcd}\]
If $Z$ is an $(\infty, n)$-$\Theta$-space, then $M_Z(x,y)$ is an $(\infty, n-1)$-$\Theta$-space \cite[Proposition 8.3]{rezk2010cartesian}.
\end{parag}

Mapping objects for set-valued and space-valued presheaves on $\Theta_n$ can be related via the functor $i_0^*: \stset \to \tset$.

\begin{lemma} \label{lemma comparing mapping objects}
    Let $Z \in \stset$, and $x, y$ be two points of the space $Z(0)$. There is a natural isomorphism
    $$i_0^*(M_Z(x,y)) \cong \Hom_{i_0^*(Z)}(x,y)$$
\end{lemma}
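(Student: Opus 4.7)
The plan is to verify the isomorphism by a direct computation, using the fact that restriction functors preserve all limits. Strictly speaking, there are two restriction functors named $i_0^*$ at play: one associated with $i_0 : \Theta_n \to \Theta_n \times \Delta$, applied to $Z$, and one associated with $i_0 : \Theta_{n-1} \to \Theta_{n-1} \times \Delta$, applied to the simplicial presheaf $M_Z(x,y)$ on $\Theta_{n-1}$. I would first make this distinction explicit, and then note that both preserve all small limits, since each is a right adjoint (the left adjoint being $p^*$ of §\ref{parag p and i_0} in the appropriate dimension), or more directly since precomposition with a functor always preserves limits of presheaves.

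Next I would apply the second $i_0^*$ to the pullback square in $\mathbf{PSh}_{\Delta}(\Theta_{n-1})$ defining $M_Z(x,y)$. Because $i_0^*$ preserves pullbacks, and because it trivially preserves constant presheaves like $\{(x,y)\}$, I obtain a pullback square in $\widehat{\Theta_{n-1}}$ of the form
\[
\begin{tikzcd}
i_0^*(M_Z(x,y))_\tau & Z(\sigma(\tau))_0 \\
\{(x,y)\} & Z(0)_0 \times Z(0)_0
\arrow[from=1-1, to=1-2]
\arrow[from=1-1, to=2-1]
\arrow[from=1-2, to=2-2]
\arrow[from=2-1, to=2-2]
\end{tikzcd}
\]
for each $\tau \in \Theta_{n-1}$, where I have used that evaluation $(-)_\tau$ of a presheaf on $\Theta_{n-1}$ at $\tau$ commutes with pullbacks, and that $i_0^*$ of a simplicial presheaf evaluated at $\tau$ is the set of $0$-simplices of its value at $\tau$.

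Then I would identify this square with the one defining $\Hom_{i_0^*(Z)}(x,y)$ at $\tau$ from §\ref{adj suspension}: by definition, $i_0^*(Z)_\theta = Z(\theta)_0$ for every $\theta \in \Theta_n$, so in particular $i_0^*(Z)_{\sigma(\tau)} = Z(\sigma(\tau))_0$ and $i_0^*(Z)_0 = Z(0)_0$. Hence the square above is exactly the pullback defining $\Hom_{i_0^*(Z)}(x,y)_\tau$, and the universal property of pullbacks furnishes a canonical isomorphism $i_0^*(M_Z(x,y))_\tau \cong \Hom_{i_0^*(Z)}(x,y)_\tau$. Naturality in $\tau$ (so that these pointwise bijections assemble into an isomorphism of presheaves) and naturality in the data $(Z, x, y)$ are both automatic from the functoriality of each construction.

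There is no real obstacle: the entire argument is bookkeeping. The only subtle point to get right is the dimensional matching of the two $i_0^*$ functors and the comparison of evaluation at $\tau \in \Theta_{n-1}$ on the $\psh_\Delta$-side with evaluation at $\sigma(\tau) \in \Theta_n$ on the set-valued side, which is precisely how the suspension $\sigma$ enters both definitions in the first place.
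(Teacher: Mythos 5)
Your proof is correct and follows essentially the same route as the paper's: the paper also argues that $i_0^*$ preserves pullbacks (being a right adjoint, or a restriction functor) and then identifies the two defining pullback squares. Your write-up merely makes explicit the bookkeeping the paper leaves implicit, in particular the distinction between the two restriction functors $i_0^*$ in dimensions $n$ and $n-1$, which is a worthwhile clarification but not a different argument.
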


\begin{proof}
    This follows directly from the definitions of the mapping objects and from the fact that $i_0^*$ preserves pullbacks, as a right adjoint (or as a restriction functor between presheaf categories).
\end{proof}

\begin{parag} \label{parag truncated rezk objects}
    Let $k \geq 0$. A \textit{$(k,0)$-$\Theta$-space} is the same as a homotopy $k$-type. An \textit{$(n+k, n)$-$\Theta$-space} $Z$ is an $(\infty, n)$-$\Theta$-space such that for every pair $(x,y)$ of points of $Z(0)$ the mapping object $M_Z(x,y)$ is an $(n-1+k, n-1)$-$\Theta$-space.
\end{parag}

\begin{proposition} [Rezk] \label{proposition rezk model st truncated}
    Let $k \geq 0$. There is a localisation $\thetanspkt$ of the model structure $\thetansp$ of $\stset$ such that the fibrant objects are exactly $(n+k,n)$-$\Theta$-spaces.
\end{proposition}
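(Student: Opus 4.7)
The plan is to apply Smith's existence theorem (Theorem \ref{smith existence theorem}). The model category $\thetansp$ is combinatorial and left proper (the latter because cofibrations are monomorphisms, hence all objects are cofibrant), so it suffices to produce a set $S$ of morphisms of $\stset$ such that the $S$-local fibrant objects of $\thetansp$ are exactly the $(n+k,n)$-$\Theta$-spaces, and then to set $\thetanspkt := L_S \thetansp$.

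Such a set $S$ can be built by induction on $n$. For $n=0$ (where $\Theta_0$ is terminal and $\stset$ reduces to simplicial sets, with $\thetansp$ the Kan--Quillen model structure), the $(k,0)$-$\Theta$-spaces are precisely the homotopy $k$-types, and the classical set $\{\partial \Delta[m] \to \Delta[m] : m \geq k+2\}$ does the job. For the inductive step, assume a set $T$ of morphisms of $\spsh(\Theta_{n-1})$ has been produced whose local fibrant objects of $\Theta_{n-1}\mathrm{Sp}$ are exactly the $(n-1+k,n-1)$-$\Theta$-spaces. There is a suspension--mapping-object adjunction analogous to \eqref{adj suspension eq}, namely $\Sigma \colon \spsh(\Theta_{n-1}) \rightleftarrows *\sqcup*/\stset : \Hom$, whose right adjoint sends $(Z,x,y)$ to $M_Z(x,y)$, and which one checks is Quillen from $\Theta_{n-1}\mathrm{Sp}$ to $*\sqcup*/\thetansp$ by a recognition principle for $\thetansp$ analogous to Proposition \ref{quillen adj suspension}. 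Take $S$ to be the image under $\Sigma$ of $T$, viewed as a set of morphisms of $\stset$ via the forgetful functor $*\sqcup*/\stset \to \stset$.

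The claim is then that a fibrant object $Z$ of $\thetansp$ is $S$-local if and only if for every pair of points $x, y \in Z(0)$ the mapping object $M_Z(x,y)$ is $T$-local, which by the inductive hypothesis is precisely the condition of being an $(n-1+k,n-1)$-$\Theta$-space, as required. This equivalence is established by combining Lemma \ref{lemma local objects slice category} (reducing $S$-locality of $Z$ to $\Sigma(T)$-locality of each bipointing $(Z,x,y)$ in the slice category) with Proposition \ref{local fibrant objects and localisation} applied to the Quillen adjunction $\Sigma \dashv \Hom$, which translates $\Sigma(T)$-locality of $(Z,x,y)$ into $T$-locality of $\Hom(Z,x,y) = M_Z(x,y)$.

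The main obstacle is the careful handling of the two basepoints. Lemma \ref{lemma local objects slice category} is phrased for a single cofibrant object $C$, so one must either adapt it to the case $C = *\sqcup*$ or iterate the argument in two stages (first slicing over one copy of $*$, then over the second). A secondary technical point is verifying the Quillen property of the suspension--hom adjunction at the level of simplicial presheaves on $\Theta_n$; this should go through by an analogue of Proposition \ref{quillen adj suspension} once one has an appropriate recognition principle for left Quillen functors out of $\thetansp$, which Rezk provides in his characterisation of $(\infty,n)$-$\Theta$-spaces via spine inclusions and groupoidification maps.
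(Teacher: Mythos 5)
The paper does not actually prove this proposition: it is attributed to Rezk, the model structure being taken from \cite[\S 11.4]{rezk2010cartesian} and the identification of the fibrant objects from \cite[Proposition 11.20]{rezk2010cartesian}. Your argument is therefore a genuinely different route: you reconstruct the localisation by induction on $n$, combining Smith's theorem with the suspension/mapping-object adjunction for simplicial presheaves, Lemma \ref{lemma local objects slice category} and Proposition \ref{local fibrant objects and localisation}. The skeleton is sound, and it is essentially the same inductive scheme the paper itself deploys later for $\Theta_n$-sets in the proof of Theorem \ref{theorem truncated}; moreover, since a Bousfield localisation is determined by its fibrant objects (\S\ref{bousfield localisation}), producing any localisation with the stated fibrant objects does prove the statement and recovers Rezk's $\thetanspkt$. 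Also, your worry about the two basepoints is unfounded: Lemma \ref{lemma local objects slice category} is stated for an arbitrary cofibrant $C$, and $C = * \sqcup *$ is cofibrant, so it applies in one step, with maps $C \to Z$ corresponding exactly to pairs of points of $Z(0)$ because $([0],[0])$ is terminal in $\Theta_n \times \Delta$.

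The step you assert rather than prove is, however, the pivotal one: that $\Sigma \colon \spsh(\Theta_{n-1} \times \Delta) \rightleftarrows * \sqcup * / \stset \colon (Z,x,y) \mapsto M_Z(x,y)$ is a Quillen adjunction between the model structure for $(\infty,n-1)$-$\Theta$-spaces and the slice of $\thetansp$. Appealing to a recognition principle analogous to Proposition \ref{quillen adj suspension} underestimates the work involved: in the set-valued case the naive recognition principle (Proposition \ref{recognition principle for left quillen functors}) carries the extra class $\p_n$, and eliminating it is exactly what the cylinder construction of Section \ref{section cylinder} and Theorem \ref{recognition principle for left quillen functors 2} were built for; no analogous statement for $\thetansp$ is established in this paper, and you do not supply one. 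The cleanest repair avoids a recognition principle altogether: check first that $\Sigma$ (Rezk's intertwining functor $V[1]$) is left Quillen for the underlying injective model structures (it preserves monomorphisms and levelwise weak equivalences, as products and coproducts of spaces preserve both), and then descend along the localisation defining the source model structure using Theorem \ref{criterion for quillen adjunction after localisation}; the condition to verify there is precisely that $M_Z(x,y)$ is an $(\infty,n-1)$-$\Theta$-space whenever $Z$ is an $(\infty,n)$-$\Theta$-space, which is \cite[Proposition 8.3]{rezk2010cartesian}, already quoted in \S\ref{parag mapping spaces for rezk model}. With that supplied, your chain of equivalences (Lemma \ref{lemma local objects slice category}, then Proposition \ref{local fibrant objects and localisation}, plus Proposition 8.3 once more to guarantee that $M_Z(x,y)$ is fibrant so the inductive hypothesis applies) goes through, and the proposal becomes a complete proof.
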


\begin{proof}
    The model structure is defined in \cite[§11.4]{rezk2010cartesian} and the characterisation of the fibrant objects is given in \cite[Proposition 11.20]{rezk2010cartesian}.
\end{proof}

\begin{theorem} \label{theorem truncated}
    Let $k \geq 0$. There is a model structure $\nqcatkt$ on $\tset$ whose fibrant objects are $(n+k)$-truncated $n$-quasi-categories. This model structure is the localization of $\nqcat$ with respect to the boundary inclusion $\partial \Theta_n[\sigma^{n-1} [k+3]] \to \Theta_n[\sigma^{n-1} [k+3]]$. Moreover, the adjunctions (\ref{adj ara rezk 1}) and (\ref{adj ara rezk 2}) are Quillen equivalences
    \begin{equation*}
    p^*: \nqcatkt \rightleftarrows \thetanspkt: i_0^*
\end{equation*}
and
    \begin{equation*}
    t_!: \thetanspkt \rightleftarrows \nqcatkt : t^*
\end{equation*}
\end{theorem}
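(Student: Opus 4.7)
The plan is to prove the three claims in order: existence of the model structure, identification of its fibrant objects, and descent of Ara's two Quillen equivalences to the truncated level. For existence, note that $\nqcat$ is combinatorial and left proper (every presheaf is cofibrant); Theorem \ref{smith existence theorem} then produces the Bousfield localisation $\nqcatkt := L_S \nqcat$ at the singleton $S = \{\partial \Theta_n[\sigma^{n-1}[k+3]] \to \Theta_n[\sigma^{n-1}[k+3]]\}$.

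To identify the fibrant objects, I would proceed by induction on $n \geq 1$. The base case is the characterisation of $(1+k)$-truncated quasi-categories as the objects local with respect to $\partial \Delta[k+3] \to \Delta[k+3]$, due to Joyal \cite{joyal2008notes} and Campbell \cite{campbell2020truncated}. For the inductive step, the essential combinatorial input is the isomorphism
$$(\partial \Theta_n[\sigma^{n-1}[k+3]] \to \Theta_n[\sigma^{n-1}[k+3]], 0, 1) \cong \Sigma(\partial \Theta_{n-1}[\sigma^{n-2}[k+3]] \to \Theta_{n-1}[\sigma^{n-2}[k+3]])$$
in $* \sqcup * / \psh(\Theta_n)$, which follows from cocontinuity of $\Sigma$ together with the direct description of proper monomorphisms $\theta' \to \sigma(\tau)$ in $\Theta_n = \Delta \wr \Theta_{n-1}$: such a mono either factors through one of the endpoint inclusions $\{0\}, \{1\} \to \sigma(\tau)$, or has the form $\sigma(\tau') \to \sigma(\tau)$ induced by a proper mono $\tau' \to \tau$ in $\Theta_{n-1}$. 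Granting this, the inductive step reads: an $n$-quasi-category $X$ is $S$-local iff (Lemma \ref{lemma local objects slice category}, since $* \sqcup *$ is cofibrant) for every pointing $(x,y)$ of $X$, the object $(X, x, y)$ is local in the bipointed slice with respect to the above morphism; iff (Proposition \ref{local fibrant objects and localisation} applied to the Quillen adjunction $\Sigma \dashv \Hom$ of Proposition \ref{quillen adj suspension}) $\Hom_X(x,y)$ is local with respect to $\partial \Theta_{n-1}[\sigma^{n-2}[k+3]] \to \Theta_{n-1}[\sigma^{n-2}[k+3]]$ in $\nmqcat$; iff (inductive hypothesis) $\Hom_X(x,y)$ is $(n-1+k)$-truncated; iff (Definition \ref{def truncated n-qcat}) $X$ is $(n+k)$-truncated.

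For the Quillen equivalences I invoke Theorem \ref{recognition of equivalence after localisastion}. For $p^* \dashv i_0^*$, one must show that a fibrant $Z \in \thetansp$ belongs to $\thetanspkt$ iff $i_0^*(Z)$ belongs to $\nqcatkt$; this I would prove by a parallel induction on $n$ using Lemma \ref{lemma comparing mapping objects}, which supplies a natural isomorphism $i_0^*(M_Z(x,y)) \cong \Hom_{i_0^*(Z)}(x,y)$ that converts Rezk's mapping-object criterion for $(n+k,n)$-$\Theta$-spaces into our $\Hom$-based definition, the base case again coming from Campbell. For $t_! \dashv t^*$, Lemmas \ref{lemma computation f e p^*} and \ref{lemma computation t_!} together show that $t_! \circ p^*$ agrees with the Yoneda embedding on representables, hence equals $\id_{\tset}$ by cocontinuity; passing to right adjoints yields $i_0^* \circ t^* \cong \id_{\tset}$. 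Therefore, for any fibrant $Y \in \nqcat$ we have $i_0^*(t^*(Y)) \cong Y$, and applying the already-established descent of the first equivalence to $Z := t^*(Y)$ shows $t^*(Y) \in \thetanspkt$ iff $Y \in \nqcatkt$. The main obstacle I expect is the combinatorial identification in the second step: while plausible, it demands careful bookkeeping of proper monomorphisms in the wreath product and of how colimits in $* \sqcup * / \psh(\Theta_n)$ differ from ordinary colimits in $\psh(\Theta_n)$ by the identification of the two endpoints.
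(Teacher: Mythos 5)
Your proposal is correct and follows essentially the same route as the paper: existence via Smith's theorem, identification of the fibrant objects by induction on $n$ through the suspension adjunction $\Sigma \dashv \Hom$ (using Proposition \ref{local fibrant objects and localisation} and Lemma \ref{lemma local objects slice category}, with Campbell's results as base case), and descent of both equivalences via Theorem \ref{recognition of equivalence after localisastion}, using Lemma \ref{lemma comparing mapping objects} for $p^* \dashv i_0^*$ and the isomorphism $i_0^* t^* \cong \id$ for $t_! \dashv t^*$. The only (harmless) variations are that you derive $i_0^* t^* \cong \id$ from Lemmas \ref{lemma computation f e p^*} and \ref{lemma computation t_!} plus cocontinuity instead of the paper's direct pointwise computation, and that you spell out the combinatorial identification $\Sigma(\partial \Theta_{n-1}[\sigma^{n-2}[k+3]] \to \Theta_{n-1}[\sigma^{n-2}[k+3]]) \cong (\partial \Theta_{n}[\sigma^{n-1}[k+3]] \to \Theta_{n}[\sigma^{n-1}[k+3]])$, which the paper asserts without proof.
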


\begin{proof}
    First, let us show that $(n+k)$-truncated $n$-quasi-categories are exactly the objects of $\tset$ which are local with respect to $\partial \Theta_n[\sigma^{n-1} [k+3]] \to \Theta_n[\sigma^{n-1} [k+3]]$ in $\nqcat$. We reason by induction on $n$. For $n=1$, this is \cite[Proposition 3.23]{campbell2020truncated}. Now let $X$ be an $(n+1)$-quasi-category. The following assertions are equivalent:
    \begin{itemize}
        \item $X$ is $(n+1+k)$-truncated
        \item For every $x,y \in X_0$, the $n$-quasi-category $\Hom_X(x,y)$ is $(n+k)$-truncated (by Definition \ref{def truncated n-qcat})
        \item For every $x,y \in X_0$, the $n$-quasi-category $\Hom_X(x,y)$ is local with respect $\partial \Theta_n[\sigma^{n-1} [k+3]] \to \Theta_n[\sigma^{n-1} [k+3]]$ in $\nqcat$ (by induction on $n$)
        \item For every $x,y \in X_0$ the bipointed $(n+1)$-quasi-category $(X, x, y)$ is local with respect to $\Sigma (\partial \Theta_n[\sigma^{n-1} [k+3]] \to \Theta_n[\sigma^{n-1} [k+3]]) = (\partial \Theta_{n+1}[\sigma^{n} [k+3]] \to \Theta_{n+1}[\sigma^{n} [k+3]])$ in $* \sqcup * / \npqcat$ (by Proposition \ref{local fibrant objects and localisation})
        \item $X$ is local with respect to $\partial \Theta_{n+1}[\sigma^{n} [k+3]] \to \Theta_{n+1}[\sigma^{n} [k+3]]$ in $\npqcat$ (by Lemma \ref{lemma local objects slice category})
    \end{itemize}
    
    Given the characterisation of truncated objects as local objects with respect to a single morphism, the existence of the model structure comes from Theorem \ref{smith existence theorem}.

    We know that the adjunctions in question are Quillen equivalences between $\nqcat$ and $\thetansp$ (by Theorems \ref{quillen eq 1 ara rezk} and \ref{quillen eq 2 ara rezk}). To show that they remain Quillen equivalences after localisation to their truncated versions, we use Theorem \ref{recognition of equivalence after localisastion}. 
    
    For the first adjunction, we have to show that an $(\infty, n)$-$\Theta$-space $Z$ is an $(n+k, n)$-$\Theta$-space if and only if $i_0^*(Z)$ is an $(n+k)$-truncated $n$-quasi-category. Once again, we proceed by induction on $n$. For $n=1$, this is \cite[Proposition 5.8.(2)]{campbell2020truncated}. The induction step follows from applying the definitions of truncated objects and using the isomorphism of Lemma \ref{lemma comparing mapping objects}.

    For the second one, we have to show that an $n$-quasi-category $X$ is $(n+k)$-truncated if and only if $t_!(X)$ is an $(n+k, n)$-$\Theta$-space. Note that, for $X \in \tset$ and $\theta \in \Theta_n$, we have isomorphisms
    $$i_0^*(t^!(X))_{\theta} = t^! (X)_{(\theta, 0)} = \Hom_{\tset}(\Theta_n[\theta] \times G(0), X) \cong X_\theta$$
    natural in $\theta$, and hence an isomorphism $X \cong i_0^*(t^!(X))$. So $X$ is $(n+k)$-truncated if and only if $i_0^*(t^!(X))$ is $(n+k)$-truncated, which by the last paragraph is the same as saying that $t^!(X)$ is an $(n+k, n)$-$\Theta$-space.
\end{proof}

\section{Groupoidal objects}

In this section, we define groupoidal $n$-quasi-categories, generalizing the definitions of Kan complexes and of groupoidal 2-quasi-categories \cite{brittes2022groupoidal}. We show that these are the fibrant objects of a localisation of $\nqcat$, and that both Quillen equivalences between $\nqcat$ and $\thetansp$ still work in the level of groupoidal objects.

\begin{parag} \label{inclusion delta theta_n}
The inclusion $i: \Delta \to \Theta_n$ induces an adjunction between the presheaf categories:
\begin{equation} \label{adj delta theta_n}
    i_!: \sset \rightleftarrows \tset: i^*
\end{equation}
\end{parag}

\begin{proposition}  \label{quillen adj inclusion delta}
The adjunction (\ref{adj delta theta_n}) is a Quillen adjunction
$$i_!: \qcat  \rightleftarrows \nqcat: i^*$$
\end{proposition}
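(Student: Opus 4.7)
The strategy is to apply Theorem \ref{recognition principle for left quillen functors 2} with $n = 1$, $\m = \tset$ endowed with the $\nqcat$ model structure, and $F = i_!$. This requires checking that $i_!$ is cocontinuous, that it preserves monomorphisms, and that it sends the classes $\i_1$ and $\j_1$ to weak equivalences of $\nqcat$. Since cofibrations are precisely monomorphisms in both $\qcat$ and $\nqcat$, these three conditions together imply that $i_!$ is left Quillen; the first holds because $i_!$ is a left adjoint.

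For the second, the key observation is that the truncation functor $\pi: \Theta_n \to \Delta$ of §\ref{parag special functors in the theta construction} is left adjoint to $i$, and this adjunction induces $\pi^* \dashv i^*$ on presheaves, so $i_! \cong \pi^*$. Being a restriction functor, $\pi^*$ preserves all small limits and colimits, in particular monomorphisms.

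For the third condition, each spine inclusion $i_{[p]}: I[p] \to \Delta[p]$ in $\i_1$ is sent by $i_!$ (which preserves colimits and sends $\Delta[k]$ to $\Theta_n[i([k])]$) to the spine inclusion $I[i([p])] \to \Theta_n[i([p])]$ in $\tset$, since $\sigma(D_0) = D_1$; this lies in $\i_n$ and is therefore a weak equivalence. For the unique morphism $N_1(j_1): N_1(J) \to \Delta[0]$ of $\j_1$, the identification $i_! \cong \pi^*$ gives, for $\theta = [p](\theta_1, \ldots, \theta_p)$, $(i_! N_1(J))(\theta) = N_1(J)(\pi\theta) = \Hom_\Cat([p], J)$. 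This is naturally isomorphic to $N_n(J)(\theta) = \Hom_\ncat(\theta, J)$, since any $n$-functor from $\theta$ to the groupoid $J$ must send each connected hom-$(n-1)$-category $\theta_i$ to a single $1$-cell of $J$, and is therefore determined by the resulting functor $[p] \to J$. Under this isomorphism $i_!(N_1(j_1))$ becomes $N_n(j_1) \in \j_n$, a weak equivalence of $\nqcat$.

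I expect the main technical point to be the identification $i_!(N_1(J)) \cong N_n(J)$: since $N_1(J)$ is not a finite colimit of representables, the description $i_! \cong \pi^*$ is what makes the computation tractable, and one must verify that $n$-functors from an object of $\Theta_n$ to the $1$-category $J$ indeed factor through the truncation $\pi$.
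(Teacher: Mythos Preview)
Your proof is correct and follows essentially the same route as the paper's: both invoke Theorem~\ref{recognition principle for left quillen functors 2}, use the identification $i_! \cong \pi^*$ to see that $i_!$ preserves monomorphisms, check that spine inclusions go to spine inclusions, and compute that $i_! N_1(J) \cong N_n(J)$. The only cosmetic difference is that the paper packages the last computation as a natural isomorphism of functors $N_n \circ i \cong i_! \circ N : \Cat \to \tset$ (where $i: \Cat \hookrightarrow \ncat$ on the left), whereas you verify it directly at the object $J$ by arguing that $n$-functors $\theta \to J$ factor through the truncation $\pi(\theta)$.
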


\begin{proof}
    We use the recognition criterion for left Quillen functors (Theorem \ref{recognition principle for left quillen functors 2}). The functor $i_!$ preserves colimits and monomorphisms. By definition, it sends $\Delta[p]$ to $\Theta_n[i[p]]$. Since it preserves colimits, it preserves spine inclusions.

    Recall that the functor $i$ admits a left adjoint $\pi: \Theta_n \to \Delta$, sending $\theta = [p](\theta_1, \ldots, \theta_p)$ to $[p]$. Thus, the left adjoint $i_!$ is isomorphic to the restriction functor $\pi^*$. A direct computation shows that we have an isomorphism of functors $N_n i \cong i_! N: \Cat \to \tset$, and so $i_! (N(J) \to D_0) = (N_n(J) \to D_0)$.

    We have shown that $i_!$ sends the classes $\i_1$ and $\j_1$ to the classes $\i_n$ and $\j_n$, which are in particular weak equivalences.
\end{proof}

\begin{definition} \label{def groupoidal n-q-cat}
    A quasi-category is \textit{groupoidal} if it is a Kan complex. Let $n \geq 2$. A $n$-quasi-category $X$ is \textit{groupoidal} if
    \begin{enumerate}
        \item $i^*(X)$ is a Kan complex
        \item For every $x, y \in X_0$, the $(n-1)$-quasi-category $\Hom_X(x,y)$ is groupoidal
    \end{enumerate}
    For short, we will call a groupoidal $n$-quasi-category an \textit{$n$-quasi-groupoid}.
\end{definition}

Recall from §\ref{parag simplicial presheaves} the functor $F: \Theta_n \to \spsh(\Theta_n)$.

\begin{proposition} [Rezk]
    There exists a model structure $\thetanspgp$ on $\stset$ given by localizing the model structure $\thetansp$ with respect to the set $$\{\{F(\tau^n_k): F(D_k) \to F(D_{k-1}), 1 \leq k \leq n\}\}$$
\end{proposition}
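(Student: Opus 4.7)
The plan is to apply Smith's existence theorem for Bousfield localisations (Theorem \ref{smith existence theorem}) to the model category $\thetansp$ with the specified set of morphisms. For this, three conditions need to be checked: that we are localising with respect to an actual set (rather than a proper class), that $\thetansp$ is combinatorial, and that $\thetansp$ is left proper.

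The first condition is immediate, since $\{F(\tau^n_k) : 1 \leq k \leq n\}$ is a finite set of cardinality $n$. For the second, the underlying category $\stset = \psh(\Theta_n \times \Delta)$ is a category of presheaves on a small category and is therefore locally presentable, while the fact that Rezk's model structure $\thetansp$ is cofibrantly generated is established in Rezk's original construction \cite{rezk2010cartesian}. For the third, the cofibrations of $\thetansp$ are the monomorphisms, so every object is cofibrant; by the standard criterion recalled just before Theorem \ref{smith existence theorem}, this already implies left properness.

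With these three inputs in place, Theorem \ref{smith existence theorem} directly produces the desired localised model structure $\thetanspgp := L_S \thetansp$ where $S = \{F(\tau^n_k) : 1 \leq k \leq n\}$. Since the proposition is explicitly attributed to Rezk, the most efficient presentation is simply a citation to the relevant construction in \cite{rezk2010cartesian}, where this localisation is introduced and its basic properties developed. There is no substantial obstacle here: the only non-trivial content is the (already established) combinatorial and left proper nature of the ambient model structure $\thetansp$, after which the localisation is a formal consequence of Smith's theorem.
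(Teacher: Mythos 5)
There is a genuine gap, and it is the main content of the proposition. The existence of \emph{some} Bousfield localisation of $\thetansp$ at the finite set $\{F(\tau^n_k)\}$ is indeed immediate from Theorem \ref{smith existence theorem}, exactly as you say; but that is not what the proposition is really asserting. The symbol $\thetanspgp$ denotes Rezk's groupoidal model structure, which Rezk defines in \cite[\S 11.25]{rezk2010cartesian} by localizing $\thetansp$ at a \emph{different} set of maps, namely $\{V[1]^k(T_\# q),\ 0 \leq k < n\}$ with $T_\# q: T_\# F[1] \to T_\# E$ (the suspended "walking arrow $\to$ walking isomorphism" maps). Your closing suggestion to "simply cite the relevant construction in \cite{rezk2010cartesian}, where this localisation is introduced" is therefore inaccurate: Rezk never introduces the localisation at $\{F(\tau^n_k): F(D_k) \to F(D_{k-1})\}$, and the whole point of the proposition is the identification of the two localisations. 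This identification is not decorative: it is what later allows Proposition \ref{quillen eq rezk spaces} (which quotes Rezk's equivalence $\ct: \kancx \rightleftarrows \thetanspgp : \ev_0$ for \emph{his} model structure) to be combined with Theorem \ref{model st groupoidal nqcat}, where the transfer of localisations requires the generating set to be exactly the image under $F$ of the maps $\tau^n_k$ used to define $\nqcatgp$.

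The missing argument is the comparison of the two localizing sets. Since the maps $V[1]^k(T_\# p)$, with $T_\# p: T_\# E \to T_\# F[0]$, are already weak equivalences of $\thetansp$ (Rezk, \S 11.6: $E$ is contractible), the 2-out-of-3 property shows that localizing at $\{V[1]^k(T_\# q)\}$ is the same as localizing at the composites $\{V[1]^k(T_\# F(\tau^1_1))\}$, where $\tau^1_1: [1] \to [0]$; a computation with \cite[Proposition 4.2, Lemma 11.10]{rezk2010cartesian} then identifies this set with $\{F(\tau^n_k): F(D_k) \to F(D_{k-1}),\ 1 \leq k \leq n\}$. Without this step (or some substitute for it), your proof only produces an unnamed localised model structure and gives no licence to call it $\thetanspgp$ or to invoke Rezk's results about it.
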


\begin{proof}
    The argument amounts to the 2-out-of-3 property of weak equivalences, used to relate the morphisms presented here with the ones used in Rezk's original definition.

    The set of morphisms used by Rezk in his definition \cite[§11.25]{rezk2010cartesian} of the model structure $\thetanspgp$ is another one, namely $\{V[1]^k(T_\# q), 0 \leq k < n\}$, where $T_\# q: T_\# F[1] \to T_\# E$ in the notation of \cite{rezk2010cartesian}. Since the morphisms $V[1]^k(T_\# p)$ for $ 0 \leq k < n$ (where $T_\# p: T_\# E \to T_\# F[0]$) are already weak equivalences in $\thetansp$ \cite[§11.6]{rezk2010cartesian}, we can replace (using 2-out-of-3) the first set by $\{V[1]^k(T_\# F(\tau^1_1)), 0 \leq k < n\}$, where $\tau^1_1: [1] \to [0]$ is the unique morphism $[1] \to [0]$ in $\Delta$. This is precisely the set $\{F(\tau^n_k): F(D_k) \to F(D_{k-1}), 1 \leq k \leq n\}\}$, which follows from a straightforward calculation using \cite[Proposition 4.2, Lemma 11.10]{rezk2010cartesian}.
\end{proof}

\begin{theorem} \label{model st groupoidal nqcat}
    There is a model structure $\nqcatgp$ on $\tset$ whose fibrant objects are $n$-quasi-groupoids. This model structure is the localization of $\nqcat$ with respect to the morphisms $\tau^n_k: \Theta_n[D_k] \to \Theta_n[D_{k-1}]$, for $1 \leq k \leq n$. Moreover, the adjunctions (\ref{adj ara rezk 1}) and (\ref{adj ara rezk 2}) are Quillen equivalences
    \begin{equation*}
    p^*: \nqcatgp \rightleftarrows \thetanspgp: i_0^*
\end{equation*}
and 
\begin{equation*}
    t_!: \thetanspgp \rightleftarrows \nqcatgp : t^*
\end{equation*}
\end{theorem}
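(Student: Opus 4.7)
The plan is to mirror the proof strategy of Theorem \ref{theorem truncated}. The first step is to establish by induction on $n$ that an $n$-quasi-category $X$ is an $n$-quasi-groupoid if and only if it is local in $\nqcat$ with respect to the set $S_n := \{\tau^n_k : \Theta_n[D_k] \to \Theta_n[D_{k-1}], 1 \leq k \leq n\}$. The base case $n = 1$ is exactly Example \ref{Kan-Quillen vs Joyal}. For the inductive step, I would split $S_n = \{\tau^n_1\} \sqcup \{\tau^n_k : 2 \leq k \leq n\}$. For $k \geq 2$, the definitions of §\ref{parag globes} give $\tau^n_k = \Sigma(\tau^{n-1}_{k-1})$, so combining Propositions \ref{quillen adj suspension} and \ref{local fibrant objects and localisation} with Lemma \ref{lemma local objects slice category} (applied to $C = * \sqcup *$) shows that $X$ is local with respect to $\{\tau^n_k : 2 \leq k \leq n\}$ exactly when $\Hom_X(x, y)$ is $S_{n-1}$-local for every pair $(x, y)$ of points of $X_0$, which by the inductive hypothesis amounts to every $\Hom_X(x, y)$ being an $(n-1)$-quasi-groupoid. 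For $k = 1$, the identity $\tau^n_1 = i_!(\tau^1_1)$ (which follows from $i[0] = D_0$ and $i[1] = D_1$) combined with Propositions \ref{quillen adj inclusion delta} and \ref{local fibrant objects and localisation} reduces $\tau^n_1$-locality of $X$ to $i^*(X)$ being $\tau^1_1$-local in the Joyal model structure, i.e., a Kan complex. This matches Definition \ref{def groupoidal n-q-cat} precisely. Once this characterization is in hand, the existence of $\nqcatgp$ as the localization $L_{S_n} \nqcat$ is immediate from Smith's Theorem \ref{smith existence theorem}, since $\nqcat$ is combinatorial and left proper and $S_n$ is finite.

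For the two Quillen equivalences, the plan is to apply Theorem \ref{recognition of equivalence after localisastion} to each of Ara's Quillen equivalences. For $p^* \dashv i_0^*$, it suffices to check that a fibrant $Z \in \thetansp$ is an $(\infty, n)$-$\Theta$-groupoid if and only if $i_0^*(Z)$ is an $n$-quasi-groupoid: the Rezk-style characterization just above the theorem expresses the former as locality with respect to $\{F(\tau^n_k) : 1 \leq k \leq n\}$, and since all objects of $\tset$ are cofibrant and $p^*(\Theta_n[\theta]) \cong F(\theta)$ by Lemma \ref{lemma computation f e p^*}, this is exactly $\L p^*(S_n)$-locality, so Proposition \ref{local fibrant objects and localisation} identifies it with $S_n$-locality of $i_0^*(Z)$. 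For $t_! \dashv t^*$, I would use the same final argument as in the proof of Theorem \ref{theorem truncated}: the natural isomorphism $i_0^*(t^*(X)) \cong X$ reduces the recognition condition for $t^*$ to the one already proved for $i_0^*$.

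The main bookkeeping obstacle is verifying the two compatibilities $\tau^n_k = \Sigma(\tau^{n-1}_{k-1})$ for $k \geq 2$ and $\tau^n_1 = i_!(\tau^1_1)$, together with keeping careful track of the three separate ``base'' adjunctions ($\Sigma \dashv \Hom$, $i_! \dashv i^*$, and Ara's equivalences) when invoking Proposition \ref{local fibrant objects and localisation}. Both identities are direct unpackings of the definitions in §\ref{parag def thetan} and §\ref{parag globes}, so no serious difficulty is expected beyond careful indexing; the overall argument runs entirely parallel to Theorem \ref{theorem truncated}.
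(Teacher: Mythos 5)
Your proposal is correct. The first half (the inductive characterisation of $n$-quasi-groupoids as the $\{\tau^n_k\}$-local $n$-quasi-categories, using $\tau^n_1 = i_!(\tau^1_1)$, $\tau^n_k = \Sigma(\tau^{n-1}_{k-1})$ for $k \geq 2$, Propositions \ref{quillen adj inclusion delta}, \ref{quillen adj suspension}, \ref{local fibrant objects and localisation} and Lemma \ref{lemma local objects slice category}, then Smith's theorem for existence) is the same argument as the paper's. Where you diverge is in descending the two Quillen equivalences: you run the scheme of Theorem \ref{theorem truncated}, i.e.\ Theorem \ref{recognition of equivalence after localisastion} applied to $p^* \dashv i_0^*$ via Proposition \ref{local fibrant objects and localisation} and Lemma \ref{lemma computation f e p^*}, and then reduce the $t_! \dashv t^*$ case to the first one through the natural isomorphism $i_0^*(t^!(X)) \cong X$. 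The paper instead invokes Theorem \ref{transfer of localisation} directly, observing that the localising sets match under the left Quillen functors: $p^*$ sends the Yoneda images of the $\tau^n_k$ to the $F(\tau^n_k)$ (Lemma \ref{lemma computation f e p^*}), and $t_!$ sends the $F(\tau^n_k)$ back to the Yoneda images (Lemma \ref{lemma computation t_!}). Both routes are valid and rest on the same computation for $p^*$; the paper's is slightly more direct and treats the two adjunctions symmetrically without needing the recognition criterion or the isomorphism $i_0^* t^! \cong \id$, while yours avoids Lemma \ref{lemma computation t_!} by recycling the $i_0^*$ statement, at the (small) cost of checking that $i_0^*(Z)$, resp.\ $t^!(X)$, is fibrant so that locality indeed amounts to fibrancy in the localised structures — which is automatic since $i_0^*$ and $t^!$ are right Quillen.
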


\begin{proof}
First, let us show that an $n$-quasi-category $X$ is local with respect to $\{\tau^n_k, 1 \leq k \leq n\}$ if and only if $X$ is groupoidal. For $n = 1$ the result holds, since Kan complexes are exactly quasi-categories which are local with respcet to $\Delta[1] \to \Delta[0]$ in $\qcat$ (cf. \cite[Proposition 3.30]{campbell2020truncated}). 

Suppose it holds for $n = m$, and let us show it is true for $n = m+1$. Let $X$ be an $(m+1)$-quasi-category. The underlying quasi-category $i^*(X)$ is a Kan complex if and only if it is local with respect to $\tau^1_1$. By Propositions \ref{inclusion delta theta_n} and \ref{local fibrant objects and localisation}, it amounts to say that $X$ is local with respect to $i_!(\tau^1_1) = \tau^{m+1}_1$. 

Now let $x, y \in X_0$. By the induction hypothesis, $\Hom_X(x, y)$ is groupoidal if and only if it is local with respect to $\{\tau^m_k, 1 \leq k \leq m\}$. By Propositions \ref{quillen adj suspension} and \ref{local fibrant objects and localisation}, it is the case exactly when $(X, x, y)$ is local with respect to the set of bipointed morphisms 
$$\Sigma (\{ \tau^m_k, 1 \leq k \leq m\ \}) = \{ \tau^{m+1}_k, 2 \leq k \leq m+1\ \}$$
We conclude using Lemma \ref{lemma local objects slice category} that $\Hom_X(x, y)$ is groupoidal for all $x, y \in X_0$ if and only if $X$ is local with respect to $\{ \tau^{m+1}_k, 2 \leq k \leq m+1\ \}$ in $(m+1)$-$\qcat$. 

Combining conditions (1) and (2) of Definition \ref{def groupoidal n-q-cat}, we have the desired result.

The existence of the model structure follows directly from Theorem \ref{smith existence theorem}.

Finally, we show that the adjunctions are indeed Quillen equivalences. Since the two adjunctions are already Quillen equivalences between $\nqcat$ and $\thetansp$ by Theorems \ref{quillen eq 1 ara rezk} and \ref{quillen eq 2 ara rezk}, it is sufficient  by Theorem \ref{transfer of localisation} to show that the class of morphisms by which we localise the right-hand side model structures is the image by the left Quillen functors of the class by which we localise the left-hand side model structures. This is the case thanks to Lemmas \ref{lemma computation f e p^*} and \ref{lemma computation t_!}, as the morphisms used in the localisation $\nqcatgp$ (resp. $\thetanspgp$) are the image by the Yoneda embedding (resp. by the functor $F: \Theta_n \to \stset$) of the morphisms $\tau^n_k$ of $\Theta_n$.
\end{proof}

In the Kan-Quillen model structure on simplicial sets, every representable is contractible. The same is true in the model structure $\nqcatgp$ on $\tset$.

\begin{proposition} \label{reprsentable is contractible in nqcatgp}
Let $\theta \in \Theta_n$. The morphism $\Theta_n[\theta] \to \Theta_n[0]$ is a weak equivalence in $\nqcatgp$.
\end{proposition}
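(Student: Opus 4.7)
The plan is to proceed by induction on $n$. For the base case $n = 1$, the model structure $\nqcatgp$ is the Kan--Quillen model structure on simplicial sets (it is obtained from Joyal's by localising at $\tau^1_1 : \Delta[1] \to \Delta[0]$, cf.\ Example \ref{Kan-Quillen vs Joyal}), and $\Delta[p]$ is contractible for every $p$, so $\Theta_1[p] \to \Theta_1[0]$ is a weak equivalence.

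For $n \geq 2$, the key preliminary observation is that the Quillen adjunction $\Sigma \dashv \Hom$ of Proposition \ref{quillen adj suspension} descends to a Quillen adjunction $\Sigma: \nmqcatgp \rightleftarrows * \sqcup * / \nqcatgp : \Hom$. This I would verify via Theorem \ref{transfer of localisation}: a direct calculation shows that $\Sigma$ sends each localising map $\tau^{n-1}_k$, for $1 \leq k \leq n-1$, to the bipointed morphism $\tau^n_{k+1}$, which is a weak equivalence in $\nqcatgp$ by construction, hence in $* \sqcup * / \nqcatgp$. The case $\theta = \sigma(\theta')$ with $\theta' \in \Theta_{n-1}$ then follows quickly: by the inductive hypothesis $\Theta_{n-1}[\theta'] \to \Theta_{n-1}[0]$ is a weak equivalence in $\nmqcatgp$, and applying $\Sigma$ yields a weak equivalence $\Theta_n[\sigma(\theta')] \to \Theta_n[1]$ in $\nqcatgp$; composing with $\tau^n_1 : \Theta_n[1] \to \Theta_n[0]$ (a weak equivalence by definition) gives the conclusion via 2-out-of-3.

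For a general $\theta = [p](\theta_1, \ldots, \theta_p) \in \Theta_n$, I would compare $\Theta_n[\theta]$ to $\Theta_n[0]$ through the spine $I[\theta]$. The spine inclusion $I[\theta] \to \Theta_n[\theta]$ is a weak equivalence in $\nqcat$ and hence in $\nqcatgp$, so by 2-out-of-3 it suffices to show $I[\theta] \to \Theta_n[0]$ is a weak equivalence. Using cocontinuity of $\Sigma$, the inductive definition of spines exhibits $I[\theta]$ as a colimit over a zigzag category of the same type as the $W_p$ of Section \ref{section cylinder}, whose primary objects are the presheaves $\Sigma I[\theta_i]$ and whose secondary objects are copies of $D_0$. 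Applying $\Sigma$ to each spine inclusion $I[\theta_i] \to \Theta_{n-1}[\theta_i]$ (a weak equivalence in $\nmqcatgp$) and composing with the suspension case treated above gives that $\Sigma I[\theta_i] \to \Theta_n[0]$ is a weak equivalence in $\nqcatgp$, so together with the identities $D_0 \to D_0$ on the secondary objects we obtain a componentwise weak equivalence from the source zigzag diagram to the constant diagram at $D_0$, whose colimit is $\Theta_n[0]$. Because all arrows are monomorphisms (hence cofibrations in $\nqcatgp$) and all objects are cofibrant, both colimits are in fact homotopy colimits by the same reasoning used in the proof of Theorem \ref{mu is a weak equivalence}, appealing to \cite[Proposition B.2.1]{curien2022rigidification}; thus $I[\theta] \to \Theta_n[0]$ is a weak equivalence and the induction closes. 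The main technical hurdle I foresee lies precisely in this last step: one has to unravel the nested inductive definition of the spine so that it presents $I[\theta]$ in a form compatible with the map of $W_p$-shaped diagrams described above, and then check that the cofibrancy conditions under which such colimits are homotopy colimits are indeed met in $\nqcatgp$.
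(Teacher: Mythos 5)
Your argument is correct in outline and shares the paper's core mechanism: pass to the spine via the spine inclusion, observe that the defining zigzag colimit is a homotopy colimit (same citation to \cite[Proposition B.2.1]{curien2022rigidification} as in Theorem \ref{mu is a weak equivalence}), contract the diagram componentwise to the constant diagram at $\Theta_n[0]$, and finish by 2-out-of-3. Where you diverge is in how the pieces of the zigzag are contracted: you keep the spine's nested structure, induct on $n$, and use the suspension adjunction descended to $\Sigma: \nmqcatgp \rightleftarrows * \sqcup * / \nqcatgp : \Hom$ to handle the blocks $\Sigma I[\theta_i]$. The paper instead flattens the spine completely: $I[\theta]$ is a colimit of a single zigzag of globes $D_{i_1}, \dots, D_{i_m}$ glued along lower globes, and each $D_i \to D_0$ is a composite of the localising maps $\tau^n_k$, hence a weak equivalence in $\nqcatgp$ by the very definition of the model structure (Theorem \ref{model st groupoidal nqcat}); this makes the induction on $n$ and the suspension machinery unnecessary, at the cost of the (routine) commutation-of-colimits step that unravels the nested definition of the spine, which you also acknowledge needing. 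One point in your version deserves a patch: Theorem \ref{transfer of localisation} only produces a Quillen adjunction into $L_{\Sigma(\{\tau^{n-1}_k\})}(* \sqcup * / \nqcat)$, which is not literally the slice model structure $* \sqcup * / \nqcatgp$ (the latter is a further localisation, e.g.\ it also inverts $\tau^n_1$), so you would either need to add the comparison between these two structures or, more simply, argue as the paper does in Lemma \ref{sphere lemma}: the descent follows from Theorem \ref{criterion for quillen adjunction after localisation}, since $\Hom_X(x,y)$ is an $(n-1)$-quasi-groupoid whenever $X$ is an $n$-quasi-groupoid. With that repair (and since all objects are cofibrant, so left Quillen functors preserve all weak equivalences), your proof goes through; it simply buys the same conclusion with heavier tools, whereas the paper's flattened-globe argument is self-contained at this point of the text.
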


\begin{proof}
    We omit the notation $\Theta_n[-]$ for representables in this proof. 

    Let $\theta \in \Theta_n$. The spine $I[\theta]$ is a colimit of the form
\[\begin{tikzcd}
	{D_{i_1}} && {D_{i_2}} && \ldots && {D_{i_m}} \\
	& {D_{j_1}} && {D_{j_2}} && {D_{j_{m-1}}}
	\arrow[from=2-2, to=1-1]
	\arrow[from=2-2, to=1-3]
	\arrow[from=2-6, to=1-7]
	\arrow[from=2-4, to=1-3]
	\arrow[from=2-4, to=1-5]
	\arrow[from=2-6, to=1-5]
\end{tikzcd}\]
for some $i_1, \ldots, i_m, j_1, \ldots, j_{m-1}$ with $i_k > j_k < i_{k+1}$, $1 \leq k < m$ (cf. §\ref{parag spine inclusions}).

By definition of the model structure $\nqcatgp$, the morphisms
    $$D_n \to D_{n-1} \to \ldots \to D_1 \to D_0$$
    are all weak equivalences, so the diagram above is equivalent in $\nqcatgp$ to the diagram of same shape and only $D_0$'s and identities. The colimit of both diagram is actually a homotopy colimit (see proof of Theorem \ref{mu is a weak equivalence}), so the equivalence of diagrams induces a weak equivalence $I[\theta] \to D_0$.

Since the spine inclusion $I[\theta] \to \theta$ is a weak equivalence of $\nqcat$ (and hence of $\nqcatgp$), the 2-out-of-3 property of weak equivalences implies that $\theta \to D_0$ is a weak equivalence of $\nqcatgp$.
\end{proof}

\section{$n$-quasi-groupoids vs spaces}

In \cite{rezk2010cartesian}, Rezk shows that there is a Quillen equivalence between the model structure for groupoidal objects in $\stset$ and the model structure for Kan complexes in $\sset$. Combining this equivalence with both equivalences of Theorem \ref{model st groupoidal nqcat}, we get respectively a zigzag of Quillen equivalences and a direct Quillen equivalence between the model structure for groupoidal $n$-quasi-categories and the one for Kan complexes. The aim of this section is to provide another direct Quillen equivalence between these model structures, induced by the inclusion $i: \Delta \to \Theta_n$, which does not factors through $\stset$.

\begin{parag} \label{functors of Rezk}
Let $\ev_0: \stset \to \sset$ be the functor defined on objects by $X \mapsto X([0], -)$. It is the right adjoint of an adjoint pair
\begin{equation} \label{adjunction rezk}
    \ct: \sset \rightleftarrows \stset: \ev_0
\end{equation}
where $\ct$ is the constant functor, i.e., for $K \in \sset$, $\ct(K)_\theta = K$.
\end{parag}

\begin{parag} \label{functor Joyal}
Let $k: \Delta \to \sset$ be the composite
$$\Delta \hookrightarrow \Cat \xrightarrow{\Pi} \Gpd \hookrightarrow \Cat \xrightarrow{N} \sset$$
This functor induces a Kan extension-nerve adjunction
\begin{equation} \label{adjunction joyal}
    k_!: \sset \rightleftarrows \sset: k^!
\end{equation}
\end{parag}

\begin{remark} \label{remark k^! anc core}
The functor $k^!$ is related to the maximal sub-Kan complex functor $J$, whose definition we recall. Let $X$ be a quasi-category, $\ho(X)$ be its homotopy category and $J: \Cat \to \Gpd$ be the maximal sub-groupoid functor, which is the right adjoint of the inclusion $\Gpd \to \Cat$. The simplicial set $J(X)$ is the pullback
\[\begin{tikzcd}
	{J(X)} & X \\
	{N(J\ho(X))} & {N(\ho(X))}
	\arrow[from=1-2, to=2-2]
	\arrow[from=2-1, to=2-2]
	\arrow[from=1-1, to=2-1]
	\arrow[from=1-1, to=1-2]
	\arrow["\lrcorner"{anchor=center, pos=0.125}, draw=none, from=1-1, to=2-2]
\end{tikzcd}\]

The simplicial set $J(X)$ is a Kan complex for every quasi-category $X$. Indeed, it is the maximal Kan complex included in $X$, since $J$ defines a functor $J: \mathbf{QCat} \to \mathbf{KanCx}$, right adjoint to the inclusion $\mathbf{KanCx} \to \mathbf{QCat}$, where $\mathbf{KanCx}$ (resp. $\mathbf{Qcat}$) is the full subcategory of $\sset$ formed by Kan complexes (resp. quasi-categories) \cite[Theorem 4.19]{joyal2008thetheory}.

If $X$ is a quasi-category, the Quillen adjunction $k_!: \kancx \rightleftarrows \qcat: k^!$ \cite[Theorem 6.22]{joyal2008thetheory} implies that $k^!(X)$ is a Kan complex. Moreover, there is a morphism $k^!(X) \to X$ induced by the inclusion $k([p]) \to \Delta[p]$ for every $[p] \in \Delta$. Therefore, the arrow $k^!(X) \to X$ factors through $J(X) \to X$. The morphism $k^!(X) \to J(X)$ is in fact a trivial fibration \cite[Proposition 6.26]{joyal2008thetheory}.

\end{remark}

\begin{lemma} \label{lemma commuting square 1}
The following square commutes up to isomorphism.
\[\begin{tikzcd}
	{\tset} & {\stset} \\
	{\sset} & {\sset}
	\arrow["{i^*}"', from=1-1, to=2-1]
	\arrow["{t^!}", from=1-1, to=1-2]
	\arrow["{\ev_0}", from=1-2, to=2-2]
	\arrow["{k^!}"', from=2-1, to=2-2]
\end{tikzcd}\]
\end{lemma}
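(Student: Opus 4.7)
The strategy is to evaluate both composites on an arbitrary $X \in \tset$ and $[p] \in \Delta$, reducing the commutativity of the square to a natural isomorphism $G([p]) \cong i_!(k([p]))$ in $\tset$.

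First I would compute both sides. For the upper path, since $t([0],[p]) = \Theta_n[0] \times G([p])$ and $\Theta_n[0]$ is terminal in $\tset$,
\[
\ev_0(t^!(X))_{[p]} = t^!(X)_{([0],[p])} = \Hom_{\tset}(\Theta_n[0] \times G([p]), X) \cong \Hom_{\tset}(G([p]), X).
\]
For the lower path, the $i_! \dashv i^*$ adjunction gives
\[
k^!(i^*(X))_{[p]} = \Hom_{\sset}(k([p]), i^*(X)) \cong \Hom_{\tset}(i_!(k([p])), X).
\]
A natural isomorphism $G([p]) \cong i_!(k([p]))$ in $\tset$ will then yield the desired natural isomorphism of composites, by applying $\Hom_{\tset}(-,X)$.

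For the key identification, I would use the adjunction $\pi \dashv i$ from §\ref{parag special functors in the theta construction}, which by the notation convention for adjunctions at the level of presheaves gives $i_! \cong \pi^*$ (as already exploited in the proof of Proposition \ref{quillen adj inclusion delta}). Hence $i_!(k([p]))(\theta) = k([p])(\pi(\theta)) = \Hom_{\Cat}(\pi(\theta), \Pi[p])$, whereas $G([p])(\theta) = N_n(\Pi[p])(\theta) = \Hom_{\ncat}(\theta, \Pi[p])$, viewing $\Pi[p]$ as an $n$-category whose $k$-cells are identities for $k \geq 2$. The problem thus reduces to the more general statement that, for any $\cc \in \Cat$ (included in $\ncat$ with trivial higher cells) and any $\theta \in \Theta_n$,
\[
\Hom_{\ncat}(\theta, \cc) \cong \Hom_{\Cat}(\pi(\theta), \cc),
\]
naturally in both variables; applied to $\cc = \Pi[p]$ this gives the needed isomorphism.

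This last identification is proved by induction on $n$. The base $n=1$ is trivial since $\pi = \id_{\Delta}$. For the inductive step, writing $\theta = [q](\theta_1, \ldots, \theta_q)$, an $n$-functor $\theta \to \cc$ amounts to a map on objects $f_0: \{0, \ldots, q\} \to \mathrm{obj}(\cc)$ together with $(n-1)$-functors $\theta_i \to \Hom_{\cc}(f_0(i-1), f_0(i))$ for each $i$, where the target is the discrete $(n-1)$-category on the set $\Hom_{\cc}(f_0(i-1), f_0(i))$. The induction hypothesis applied to $\theta_i \in \Theta_{n-1}$ reduces each such $(n-1)$-functor to a 1-functor $\pi(\theta_i) = [q_i] \to \Hom_{\cc}(f_0(i-1), f_0(i))$; since the target is discrete, this collapses to a single choice of morphism in $\Hom_{\cc}(f_0(i-1), f_0(i))$. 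Assembling this data gives exactly a 1-functor $\pi(\theta) = [q] \to \cc$, as required. The main obstacle is to keep track of this recursive unfolding precisely; conceptually, it amounts to the fact that the left adjoint to the inclusion $\Cat \hookrightarrow \ncat$ restricts on $\Theta_n$ to the truncation $\pi$.
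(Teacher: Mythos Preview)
Your proposal is correct and follows essentially the same route as the paper: both compute the two composites at an object $X$ and a simplex $[p]$, and reduce the question to the natural isomorphism $G([p]) \cong i_!(k([p]))$, i.e.\ $N_n(\Pi[p]) \cong i_!(N\Pi[p])$. The paper simply asserts this last isomorphism as a direct computation (it is the instance $N_n \circ (\Cat \hookrightarrow \ncat) \cong i_! \circ N$ already invoked in the proof of Proposition~\ref{quillen adj inclusion delta}), whereas you unpack it via $i_! \cong \pi^*$ and an inductive identification of $\pi$ with the restriction to $\Theta_n$ of the left adjoint to $\Cat \hookrightarrow \ncat$; this extra detail is fine and makes the argument self-contained.
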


\begin{proof}
    This is a direct computation using the definitions of the functors. Indeed, let $X \in \tset$ and $[m] \in \Delta$. We have
    $$\ev_0 t^!(X)_m = t^!(X)_{([0], [m])} = \Hom_{\tset}(\Theta_n[0] \times G([m]), X) \cong \Hom_{\tset}(N_n \Pi([m]), X)$$
    and 
    $$k^! i^*(X)_m = \Hom_{\sset}(k([m]), i^*(X)) \cong \Hom_{\tset}(i_! N \Pi([m]), X) \cong \Hom_{\tset} (N_n \Pi([m]), X)$$
\end{proof}

\begin{proposition}[Rezk] \label{quillen eq rezk spaces} The adjunction (\ref{adjunction rezk}) is a Quillen equivalence
$$\ct: \kancx \rightleftarrows \thetanspgp: \ev_0$$
\end{proposition}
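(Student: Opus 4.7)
The plan is to verify the Quillen adjunction and then the Quillen equivalence in two steps. First, I would establish that $\ct \dashv \ev_0$ is a Quillen adjunction from $\kancx$ to $\thetanspgp$. The functor $\ct$ coincides with $p^*$ for the projection $p: \Theta_n \times \Delta \to \Delta$, so it is cocontinuous and preserves monomorphisms, hence cofibrations of $\thetansp$. A constant simplicial presheaf on a Kan complex trivially satisfies Rezk's Segal, completeness and groupoidality conditions, so $\ct(K)$ is fibrant in $\thetanspgp$ for every Kan complex $K$; in particular $\ct$ sends Kan equivalences to weak equivalences of $\thetanspgp$. Alternatively, having established the Quillen adjunction at the level of the full structures, I can descend to $\thetanspgp$ via Theorem \ref{criterion for quillen adjunction after localisation}: one needs only that $\ev_0(Z) = Z([0], -)$ is a Kan complex for every fibrant $Z \in \thetanspgp$, which is immediate since such a $Z$ is in particular fibrant in $\thetansp$.

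For the Quillen equivalence, I would directly verify that the derived unit and counit are weak equivalences. Every object of $\sset$ is cofibrant and $\ct(K)$ is fibrant in $\thetanspgp$ by the argument above, so the derived unit at $K$ is simply $K \to \ev_0(\ct(K)) = K$, the identity. For the derived counit at a fibrant $Z \in \thetanspgp$, the natural map $\ct(\ev_0(Z)) \to Z$ at $\theta \in \Theta_n$ is the map $Z([0], -) \to Z(\theta, -)$ of simplicial sets induced by the unique morphism $\theta \to [0]$ in $\Theta_n$, and the task is to show this is a Kan equivalence for every $\theta$. The key input is the $\thetanspgp$-analogue of Proposition \ref{reprsentable is contractible in nqcatgp}: the morphism $F(\theta) \to F([0])$ is a weak equivalence in $\thetanspgp$. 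The same spine-decomposition argument applies—after localising at the maps $F(\tau^n_k)$, every globe $F(D_k)$ becomes equivalent to $F([0])$, so the spine of $\theta$, being a homotopy colimit of globes, collapses to $F([0])$, and the spine inclusion is already a weak equivalence in $\thetansp$. Combining this with the homotopy Yoneda lemma yields $Z(\theta) \simeq Z([0])$ naturally, so the counit is a pointwise Kan equivalence and hence a weak equivalence in $\thetanspgp$.

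I expect the main technical obstacle to be the careful verification of this contractibility statement in $\thetanspgp$ (controlling the interaction between the localising morphisms and the Segal-type conditions) together with the identification of weak equivalences between fibrant objects of $\thetanspgp$ with pointwise Kan equivalences, which is needed to conclude from the derived-mapping-space computation above. Both ingredients are essentially contained in \cite{rezk2010cartesian}, so the proof ultimately amounts to transporting Rezk's arguments into the present setup.
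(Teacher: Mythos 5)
The paper gives no argument here at all: its ``proof'' is the citation \cite[Proposition 11.27.(1)]{rezk2010cartesian}. Your proposal therefore necessarily takes a different route, and its overall shape is sound and close to Rezk's own reasoning. In particular the counit step is the right mechanism: the spine inclusions are weak equivalences in $\thetansp$, the globes $F(D_k)$ collapse onto $F([0])$ after localising at the maps $F(\tau^n_k)$, so $F(\theta) \to F([0])$ is a weak equivalence in $\thetanspgp$ (the exact analogue of Proposition \ref{reprsentable is contractible in nqcatgp}), and the enriched Yoneda lemma identifies the induced equivalence $\Map(F([0]),Z) \to \Map(F(\theta),Z)$, for $Z$ fibrant, with the counit $Z([0]) \to Z(\theta)$; a pointwise Kan equivalence is in particular a weak equivalence of $\thetanspgp$, so the derived counit is invertible.

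There are, however, two points to repair. The substantive one is your claim that $\ct(K)$ is ``trivially'' fibrant in $\thetanspgp$ for a Kan complex $K$. Rezk's model structure is a left Bousfield localisation of the injective model structure on $\stset$, so fibrancy requires injective fibrancy in addition to the Segal, completeness and groupoidal conditions, and a constant simplicial presheaf on a Kan complex is in general \emph{not} injectively fibrant: already over $\Delta \subset \Theta_n$ the relevant matching map at $[1]$ is the diagonal $K \to K \times K$, which fails to be a Kan fibration for, say, $K$ the nerve of the contractible groupoid on two objects. What is true, and what your unit computation actually needs, is only that $\ct(K)$ is \emph{local}: an injectively fibrant replacement of $\ct(K)$ is levelwise equivalent to the constant presheaf with all structure maps equivalences, hence satisfies the Segal, completeness and groupoidal locality conditions, so the fibrant replacement map $\ct(K) \to R\ct(K)$ in $\thetanspgp$ is a levelwise weak equivalence and the derived unit $K \to \ev_0(R\ct(K))$ is a Kan equivalence. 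The minor point is the appeal to Theorem \ref{criterion for quillen adjunction after localisation}: that criterion governs localising the \emph{source} of the left adjoint, whereas here the passage from $\thetansp$ to $\thetanspgp$ localises the target, where no criterion is needed — once $\ct: \kancx \rightleftarrows \thetansp : \ev_0$ is Quillen (which holds because $\ct$ preserves monomorphisms and sends Kan equivalences to levelwise, hence local, equivalences), composing with the localisation adjunction already gives the Quillen adjunction into $\thetanspgp$.
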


\begin{proof}
See \cite[Proposition 11.27.(1)]{rezk2010cartesian}.
\end{proof}

\begin{proposition}[Joyal] \label{quillen eq joyal}
    The adjunction (\ref{adjunction joyal}) is a Quillen equivalence
    $$k_!: \kancx \rightleftarrows \kancx: k^!$$
\end{proposition}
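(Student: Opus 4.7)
The plan is to first verify the Quillen adjunction and then exhibit the Quillen equivalence via a natural weak equivalence between $k_!$ and the identity. The Quillen adjunction is immediate: Joyal's theorem \cite[Theorem 6.22]{joyal2008thetheory} gives a Quillen adjunction $k_!: \kancx \rightleftarrows \qcat: k^!$, and composing with the Quillen adjunction $\id: \qcat \rightleftarrows \kancx: \id$ from Example \ref{Kan-Quillen vs Joyal} (where $\kancx$ appears as a Bousfield localisation of $\qcat$) yields the required $k_!: \kancx \rightleftarrows \kancx: k^!$.

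For the equivalence, I introduce the natural transformation $\alpha: \id_{\sset} \Rightarrow k_!$ whose component at $\Delta[p]$ is the map $\Delta[p] \to k([p]) = N(\Pi([p]))$ induced by the unit of $\Pi \dashv i$. The key lemma is that $\alpha_A$ is a Kan--Quillen weak equivalence for every simplicial set $A$. On a representable this is clear: $\Delta[p]$ is contractible, and $k([p])$ is the nerve of the chaotic groupoid on $p+1$ objects, hence also contractible. For the propagation, $k_!$ is cocontinuous and preserves monomorphisms (being left Quillen) and $\kancx$ is left proper; the gluing lemma together with the stability of Kan--Quillen weak equivalences under filtered colimits along cofibrations imply that the class of $A$ for which $\alpha_A$ is a weak equivalence is closed under coproducts, pushouts along cofibrations, and transfinite compositions of cofibrations. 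Since every simplicial set is a cell complex built from the representables via its skeletal filtration, the lemma follows.

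Once $\alpha$ is a pointwise weak equivalence, the derived transformation $\L \alpha : \id_{\Ho(\kancx)} \Rightarrow \L k_!$ is a natural isomorphism, so $\L k_! \simeq \id$. By uniqueness of right adjoints up to natural isomorphism, $\R k^!$ is isomorphic to the identity as well, and the derived adjunction $\L k_! \dashv \R k^!$ is therefore an equivalence of categories, which is precisely the Quillen equivalence condition. The main obstacle is the propagation step in the lemma---passing from representables to arbitrary simplicial sets---which is handled by the cell-complex induction made available by left properness and combinatoriality of $\kancx$.
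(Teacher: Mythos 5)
Your proposal is correct and follows the same skeleton as the paper's proof: the Quillen adjunction is obtained exactly as in the paper by composing $k_!: \kancx \rightleftarrows \qcat: k^!$ with the localisation adjunction $\id: \qcat \rightleftarrows \kancx: \id$, and the equivalence is deduced from a natural pointwise weak equivalence $\id \Rightarrow k_!$, giving $\L k_! \cong \id$ on homotopy categories. The only real difference is that the paper simply cites Joyal's Theorem 6.22 for the existence of the natural weak homotopy equivalence $X \to k_! X$, whereas you re-derive it: on representables both $\Delta[p]$ and $k([p]) = N(\Pi[p])$ are contractible, and you propagate to all simplicial sets by cellular induction (coproducts, pushouts along cofibrations via the gluing lemma, transfinite compositions, skeletal filtration), using that $k_!$ is cocontinuous and preserves monomorphisms. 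This cell-induction is the standard argument and is sound (the attaching objects $\partial\Delta[m]$ enter the class by the induction on dimension), so your proof is a self-contained version of the input the paper takes as a citation; the trade-off is length versus reliance on Joyal's result.
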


\begin{proof}
    The adjunction is Quillen as a composite of two Quillen adjunctions: $$k_!: \kancx \rightleftarrows \qcat: k^!$$ which is a Quillen adjunction by \cite[Theorem 6.22]{joyal2008thetheory}, and $$\id: \qcat \rightleftarrows \kancx: \id$$ given by the fact that $\kancx$ is a left Bousfield localisation of $\qcat$. 

    Therefore, we have to show that the derived adjunction
    $$\mathbb{L} k_!: \Ho(\kancx) \rightleftarrows \Ho(\kancx): \mathbb{R} k^!$$
    is an equivalence of categories. This is true since for every (cofibrant) object $X$ of $\sset$, there is a natural weak homotopy equivalence $X \to k_! X$ \cite[Theorem 6.22]{joyal2008thetheory}, and then a natural isomorphism of functors $\id \cong \mathbb{L} k_!: \Ho(\kancx) \to \Ho(\kancx)$.
\end{proof}

\begin{proposition} \label{quillen adjunction kan cx groupoidal nqcat}
The adjunction (\ref{adj delta theta_n}) is a Quillen adjunction
$$i_!: \kancx \rightleftarrows \nqcatgp: i^*$$
\end{proposition}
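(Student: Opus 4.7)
The plan is to reduce the statement to a direct application of Theorem \ref{criterion for quillen adjunction after localisation}, exploiting the fact that the Kan-complex condition is built into the definition of groupoidal $n$-quasi-categories. First, I would observe that the adjunction $i_!: \qcat \rightleftarrows \nqcatgp: i^*$ is already Quillen: it is obtained by composing the Quillen adjunction $i_!: \qcat \rightleftarrows \nqcat: i^*$ of Proposition \ref{quillen adj inclusion delta} with the canonical Quillen adjunction $\id: \nqcat \rightleftarrows \nqcatgp: \id$ coming from the fact that $\nqcatgp$ is a left Bousfield localization of $\nqcat$ (Theorem \ref{model st groupoidal nqcat}).

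Since $\kancx$ is itself a left Bousfield localization of $\qcat$ by Example \ref{Kan-Quillen vs Joyal}, Theorem \ref{criterion for quillen adjunction after localisation} reduces the problem to checking that, for every fibrant object $Y$ of $\nqcatgp$, the simplicial set $i^*(Y)$ is fibrant in $\kancx$, i.e., a Kan complex. By Theorem \ref{model st groupoidal nqcat}, such a $Y$ is precisely an $n$-quasi-groupoid, and condition (1) of Definition \ref{def groupoidal n-q-cat} says exactly that $i^*(Y)$ is a Kan complex whenever $Y$ is an $n$-quasi-groupoid. The criterion is therefore satisfied automatically.

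There is no genuine obstacle in this argument: the definition of groupoidal $n$-quasi-category was designed so that this verification is immediate. The only point that might look delicate is the direction in which Theorem \ref{criterion for quillen adjunction after localisation} is being applied, namely that one localizes the \emph{source} of the Quillen adjunction; this is why it is essential to first replace $\nqcat$ by $\nqcatgp$ on the right-hand side before invoking the criterion, as the analogous statement with $\nqcat$ on the right would fail (a generic $n$-quasi-category is not a Kan complex on its underlying simplicial set).
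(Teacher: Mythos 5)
Your proposal is correct and follows essentially the same route as the paper: first compose the Quillen adjunction of Proposition \ref{quillen adj inclusion delta} with the localisation adjunction $\id: \nqcat \rightleftarrows \nqcatgp: \id$, then apply Theorem \ref{criterion for quillen adjunction after localisation} to the localisation $\kancx$ of $\qcat$, using that condition (1) of Definition \ref{def groupoidal n-q-cat} makes the fibrancy check immediate. Your additional remark about the order of operations (localising the source only after passing to $\nqcatgp$ on the right) is accurate, though the paper leaves it implicit.
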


\begin{proof}
    The adjunction $i_!: \qcat \rightleftarrows \nqcatgp: i^*$ is Quillen as a composite of the Quillen adjunctions  
    $$i_!: \qcat \rightleftarrows \nqcat: i^*$$
    of Proposition \ref{quillen adj inclusion delta} and 
    $$\id: \nqcat \rightleftarrows \nqcatgp: \id$$
    given by left Bousfield localisation. 
    
    Since for every $n$-quasi-groupoid $X$, the simplicial set $i^*(X)$ is a Kan complex (i.e., a fibrant object in $\kancx$) by definition, and $\kancx$ is a localisation of $\qcat$, the adjunction
    $$i_!: \kancx \rightleftarrows \nqcatgp: i^*$$
    is Quillen by Theorem \ref{criterion for quillen adjunction after localisation}.
\end{proof}

\begin{theorem} \label{quillen eq kan cx groupoidal nqcat}
The adjunction (\ref{adj delta theta_n}) is a Quillen equivalence
$$i_!: \kancx \rightleftarrows \nqcatgp: i^*$$
\end{theorem}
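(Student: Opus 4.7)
The plan is to exhibit the adjunction as fitting into a commutative square together with three functors already known to be Quillen equivalences, and then conclude by a $2$-out-of-$3$ argument for equivalences of homotopy categories. By Proposition~\ref{quillen adjunction kan cx groupoidal nqcat}, the adjunction $i_! \dashv i^*$ is already a Quillen adjunction, so it suffices to show that the derived right adjoint $\mathbb{R} i^*$ is an equivalence of categories.

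The key input is Lemma~\ref{lemma commuting square 1}, which provides a natural isomorphism $\ev_0 \circ t^! \cong k^! \circ i^*$ of right Quillen functors $\tset \to \sset$. Since each of the four functors involved is right Quillen, in particular preserves fibrant objects, derivation commutes with composition up to canonical natural isomorphism (by Ken Brown's lemma), and the square descends to a commutative square of right derived functors
\[
\mathbb{R}\ev_0 \circ \mathbb{R} t^! \; \cong \; \mathbb{R} k^! \circ \mathbb{R} i^*
\]
as functors $\Ho(\nqcatgp) \to \Ho(\kancx)$.

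By Theorem~\ref{model st groupoidal nqcat}, the functor $\mathbb{R} t^!$ is an equivalence of homotopy categories; by Propositions~\ref{quillen eq rezk spaces} and~\ref{quillen eq joyal}, so are $\mathbb{R}\ev_0$ and $\mathbb{R} k^!$. Hence the left-hand side of the displayed isomorphism is an equivalence of categories, so the right-hand side is too. Since $\mathbb{R} k^!$ is an equivalence, $2$-out-of-$3$ forces $\mathbb{R} i^*$ to be an equivalence, proving that $i_! \dashv i^*$ is a Quillen equivalence.

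The only point requiring mild care is the passage from the commutative square of right Quillen functors to the commutative square of derived functors; this is formal once one invokes that right Quillen functors preserve fibrant objects, so that one may replace compositions of derived functors by the derived functor of the composition. Everything else is a straightforward combination of the Quillen equivalences already established in the excerpt, which is precisely the strategy advertised in the introduction of building a triangle of Quillen adjunctions passing through Rezk's model $\thetanspgp$.
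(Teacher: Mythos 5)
Your proposal is correct and follows essentially the same route as the paper: the paper's proof applies the 2-out-of-3 property of Quillen equivalences to the very square of Lemma \ref{lemma commuting square 1}, citing Theorem \ref{model st groupoidal nqcat}, Propositions \ref{quillen eq rezk spaces}, \ref{quillen eq joyal} and \ref{quillen adjunction kan cx groupoidal nqcat}, exactly as you do. Your explicit passage to derived functors via preservation of fibrant objects is just the standard justification of that 2-out-of-3 step, which the paper leaves implicit.
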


\begin{proof}
    We use the 2-out-of-3 property of Quillen equivalences, applied to the diagram of Lemma \ref{lemma commuting square 1} with the following model structures
\[\begin{tikzcd}
	\nqcatgp & \thetanspgp \\
	\kancx & \kancx
	\arrow["{i^*}"', from=1-1, to=2-1]
	\arrow["{t^!}", from=1-1, to=1-2]
	\arrow["{\ev_0}", from=1-2, to=2-2]
	\arrow["{k^!}"', from=2-1, to=2-2]
\end{tikzcd}\]
    
    The horizontal arrows are the right Quillen functors of Quillen equivalences by Theorem \ref{model st groupoidal nqcat} and Proposition \ref{quillen eq joyal}. The right vertical arrow is the right Quillen functor of the Quillen equivalence of Proposition \ref{quillen eq rezk spaces}. The left vertical arrow is the right Quillen functor of a Quillen adjunction by Proposition \ref{quillen adjunction kan cx groupoidal nqcat}.
\end{proof}

\begin{parag} \label{weakly equivalent objects}
    We say that two objects of a model category are \textit{weakly equivalent} if there is a zigzag of weak equivalences between them, that is, if they are isomorphic in the homotopy category.
\end{parag}

We introduce some terminology to clarify the nature of the next lemma.

\begin{parag} \label{spheres in a model category}
    Let $\m$ be a model category with a terminal object $*$. We will define what we mean by a sphere in $\m$. The 0-sphere $\Sph^0$ is the object $* \sqcup *$. For $k >0$, the $k$-sphere $\Sph^k$ is the homotopy colimit of the diagram
\[\begin{tikzcd}
	{\Sph^{k-1}} & {*} \\
	{*}
	\arrow[from=1-1, to=1-2]
	\arrow[from=1-1, to=2-1]
\end{tikzcd}\]
in $\m$. This definition is unique up to isomorphism in the homotopy category $\Ho(\m)$, i.e., all $k$-spheres are weakly equivalent.
\end{parag}

\begin{example} \label{ex spheres in kan-quillen}
Consider the Kan-Quillen model structure on the category of simplicial sets. For every $k > 0$, the boundary $\partial \Delta[k]$ is a model for $\Sph^{k-1}$. Indeed, $\partial \Delta[1] = \Delta[0] \sqcup \Delta[0]$, and $\Delta[0]$ is the terminal object of $\sset$. By induction, suppose that $\partial \Delta[k]$ is a model for $\Sph^{k-1}$. Consider the following pushout diagram
\[\begin{tikzcd}
	{\partial \Delta[k]} & {\Lambda^0[k+1]} \\
	{\Delta[k]}
	\arrow[from=1-1, to=1-2]
	\arrow[from=1-1, to=2-1]
\end{tikzcd}\]
where the horizontal arrow sends $\partial \Delta[k]$ to the boundary of the missing face, and the vertical arrow is the boundary inclusion. It can be used to compute the homotopy pushout in the definition above (cf. \cite[Proposition A.2.4.4]{lurie2009higher}), since by induction $\partial \Delta[k]$ models $\Sph^{k-1}$, and we know that all objects are cofibrant, both arrows are cofibrations, and the objects $\Delta[k]$ and $\Lambda^0[k+1]$ are contractible in $\kancx$. The pushout, which equals $\partial \Delta[k+1]$, is then a model of $\Sph^{k}$.
\end{example}

In the following statement and proof, we omit the notation of the fully faithful inclusions $\Theta_m \to \Theta_n$, for $m < n$.

\begin{lemma} \label{sphere lemma}
    Let $k > 0$. For every $0 \leq \ell < n$ and $p \geq 0$ such that $\ell + p = k$, the object $\partial \Theta_n [\sigma^{\ell}[p]]$ of $\tset$ is a $(k-1)$-sphere in $\nqcatgp$.
\end{lemma}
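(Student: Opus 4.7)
The plan is to prove the lemma by induction on $k$.

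For the base case $k = 1$, the admissible pairs $(\ell,p)$ with $\ell + p = 1$ and $\ell < n$ all give $\sigma^{\ell}[p] = D_1$ (namely $(0,1)$ and, if $n \geq 2$, also $(1,0)$). Thus $\partial \Theta_n[D_1] \cong \Theta_n[D_0] \sqcup \Theta_n[D_0] = D_0 \sqcup D_0$, which is the $0$-sphere by definition.

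For the inductive step with $k \geq 2$, I would split on whether $\ell = 0$. In the case $\ell = 0$, we have $\sigma^{0}[p] = i[p]$ for $i : \Delta \to \Theta_n$ the inclusion from Paragraph \ref{parag special functors in the theta construction}. Since $i[p] = [p]([0],\ldots,[0])$ represents a $1$-truncated $n$-category, every monomorphism into it in $\Theta_n$ comes from a monomorphism in $\Delta$. Because $i_!$ preserves colimits, this yields $\partial \Theta_n[i[p]] \cong i_!(\partial \Delta[p])$. Example \ref{ex spheres in kan-quillen} identifies $\partial \Delta[p]$ as a $(p-1)$-sphere in $\kancx$; since $i_! : \kancx \to \nqcatgp$ is left Quillen by Proposition \ref{quillen adjunction kan cx groupoidal nqcat} and sends $\Delta[0]$ to the terminal object $\Theta_n[0]$, it preserves the homotopy pushouts defining spheres, giving the result.

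In the case $\ell \geq 1$, let $\theta = \sigma^{\ell-1}[p]$, so that $\sigma^{\ell}[p] = [1](\theta)$; by the induction hypothesis applied to $k-1 = (\ell-1)+p$, we have $\partial \Theta_n[\theta] \simeq \Sph^{k-2}$ in $\nqcatgp$. The goal is to exhibit $\partial \Theta_n[[1](\theta)]$ as the homotopy pushout of $D_0 \leftarrow \partial \Theta_n[\theta] \to D_0$, which is $\Sph^{k-1}$ by definition. The maximal proper subobjects of $[1](\theta)$ in $\Theta_n$ are exactly the $[1](\theta')$ for $\theta'$ maximal proper in $\theta$ (using $\theta \neq D_0$, which holds for $k \geq 2$), so $\partial \Theta_n[[1](\theta)]$ is the colimit in $\widehat{\Theta_n}$ of these $[1](\theta')$'s with their intersections; this colimit can be recognised as the underlying presheaf of the bipointed suspension of $\partial \Theta_n[\theta]$ via the analogue of the adjunction of Paragraph \ref{adj suspension}. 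Each representable $[1](\theta')$ is contractible in $\nqcatgp$ by Proposition \ref{reprsentable is contractible in nqcatgp}, and $\nqcatgp$ is left proper, so replacing these representables by a single point in the colimit diagram converts the strict pushout into the homotopy pushout $D_0 \leftarrow \partial \Theta_n[\theta] \to D_0$, as desired.

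The hard part will be the precise identification of $\partial \Theta_n[[1](\theta)]$ with the appropriate suspension diagram for $\partial \Theta_n[\theta]$, together with rigorously verifying that the contractible-representable replacement is homotopically valid. This requires carefully unwinding the subobject structure of $[1](\theta)$ in the wreath-product category and controlling the homotopy colimit via left properness and the behaviour of the bipointed suspension in the groupoidal localisations. An alternative route is to transfer the calculation through the Quillen equivalences of Theorems \ref{quillen eq kan cx groupoidal nqcat} and \ref{model st groupoidal nqcat}: in either $\kancx$ or $\thetanspgp$, iterated "join"-type boundary constructions are classically recognised as spheres, which could let us bypass the suspension-at-the-presheaf-level argument entirely.
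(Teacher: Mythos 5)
Your base case and your $\ell = 0$ case are fine and coincide with the paper's argument ($\partial\Theta_n[i[p]] \cong i_!(\partial\Delta[p])$ plus the fact that $i_!$ is left Quillen and preserves the terminal object). The genuine gap is in the case $\ell \geq 1$, at exactly the step you yourself flag as "the hard part": the identification of $\partial\Theta_n[[1](\theta)]$ with the homotopy pushout of $D_0 \leftarrow \partial\Theta_n[\theta] \to D_0$. The mechanism you propose --- decompose $\partial\Theta_n[[1](\theta)]$ into the pieces $[1](\theta')$, note each is contractible by Proposition \ref{reprsentable is contractible in nqcatgp}, and "replace these representables by a single point" using left properness --- does not produce that homotopy pushout. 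First, there is no map from $\partial\Theta_n[\theta]$ into $\partial\Theta_n[[1](\theta)]$ that could serve as the middle leg of your cospan: the cells of $\partial\Theta_n[\theta]$ have unsuspended shapes, while $\partial\Theta_n[[1](\theta)]$ is built from suspended shapes and the two vertices; relating $\Sigma$ to the unreduced suspension is precisely the nontrivial content, and contractibility of the pieces plus left properness does not supply it. Second, the colimit decomposition must include all the basepoint identifications (two pieces $[1](\theta'_1)$, $[1](\theta'_2)$ with $\theta'_1 \cap \theta'_2 = \emptyset$ still meet in the two vertices), and it is exactly these identifications that raise the dimension; if one were allowed to replace each contractible piece of the diagram by a point and take the (homotopy) colimit of the resulting diagram, one would get the nerve of the indexing category, which has the homotopy type of $\partial\Theta_n[\theta]$ itself, i.e. a $(k-2)$-sphere --- one dimension too low. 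So the inference as stated cannot be correct. Note also that the suspension adjunction of §\ref{adj suspension} applied to the $\Theta_n$-set $\partial\Theta_n[\theta]$ lands in bipointed $\Theta_{n+1}$-sets, so "the bipointed suspension of $\partial\Theta_n[\theta]$" is not even an object of $\tset$; the relevant identity is $\partial\Theta_n[\sigma(\theta)] \cong \Sigma(\partial\Theta_{n-1}[\theta])$ with the suspension one level down, which forces an induction on $n$ as well as on $k$.

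The paper fills this gap differently: it checks (via Theorem \ref{criterion for quillen adjunction after localisation} and the definition of $n$-quasi-groupoids) that the Quillen adjunction of Proposition \ref{quillen adj suspension} descends to $\Sigma: \nmqcatgp \rightleftarrows * \sqcup * / \nqcatgp$, presents $\partial\Theta_{n-1}[\sigma^{\ell-1}[p]]$ (for $p>0$) as a homotopy pushout of two contractible objects along the lower-dimensional sphere, applies the left Quillen functor $\Sigma$ to that square, and then uses that the resulting corners $\Sigma(*) = D_1$ are weakly equivalent to $D_0$ in $\nqcatgp$; the remaining globe case $p = 0$ is handled by the explicit pushout of two copies of $\Theta_n[D_k]$ along $\partial\Theta_n[D_k]$. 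Your fallback suggestion (transferring the computation through the Quillen equivalences with $\kancx$ or $\thetanspgp$) does not obviously help either: the comparison functors pointing out of $\tset$ are right Quillen and the boundaries in question are not fibrant, while the left Quillen functors into $\tset$ only reach the shapes with $\ell = 0$. To repair your proof, replace the "collapse contractible representables" step by the suspension-is-left-Quillen argument above.
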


\begin{proof} 
The proof follows by induction on $n$ and $k$.

For $n = 1$, we have $\Theta_1 = \Delta$, $\ell = 0$, and the result is explained in Example \ref{ex spheres in kan-quillen}. Now suppose the result is true for $\Theta_{n-1}$, for $n > 1$, and we will show that it is true for $\Theta_n$. 

We proceed by induction on $k$. For $k = 1$, the statement is true, since we have
$$\partial \Theta_n[\sigma [0]] = \partial \Theta_n[1] = \Theta_n[0] \sqcup \Theta_n[0]$$

Now suppose it is true for $k' \leq k$, $k > 0$. We want to show that $\partial \Theta_n [\sigma^{\ell}[p]]$ is a $k$-sphere, for $\ell + p = k+1$. 

For $\ell = 0$ (so $p = k+1$), we have 
$$\partial \Theta_n[k+1] = i_! (\partial \Delta[k+1])$$
which is a $k$-sphere, given the Example \ref{ex spheres in kan-quillen} and the fact that $i_!: \kancx \to \nqcatgp$ is a left Quillen functor, preserving both cofibrations and weak equivalences (since all objects are cofibrant) and sending the terminal object of $\sset$ to the terminal object of $\tset$.

For $\ell > 0$, we have
$$\partial \Theta_n [\sigma^{\ell} [p]] = \Sigma ( \partial \Theta_{n-1} [\sigma^{\ell - 1}[p]])$$
and $ \partial \Theta_{n-1} [\sigma^{\ell - 1}[p]]$ is a model for $\Sph^{k-1}$ by the induction hypothesis. If $p >0$, it can be presented as a homotopy pushout
\[\begin{tikzcd}
	{\partial \Theta_{n-1} [\sigma^{\ell - 1}[p-1]]} & {*} \\
	{*} & {\partial \Theta_{n-1} [\sigma^{\ell - 1}[p]]}
	\arrow[from=1-1, to=1-2]
	\arrow[from=1-1, to=2-1]
	\arrow[from=2-1, to=2-2]
	\arrow[from=1-2, to=2-2]
	\arrow["h"{description, pos=0.3}, "\lrcorner"{anchor=center, pos=0.125, rotate=180}, draw=none, from=2-2, to=1-1]
\end{tikzcd}\]

For every $n$, the Quillen adjunction $\Sigma: \nmqcat \rightleftarrows * \sqcup * / \nqcat: \Hom$ of Proposition \ref{quillen adj suspension} descends to a Quillen adjunction $\nmqcatgp \rightleftarrows * \sqcup * / \nqcatgp$ by Proposition \ref{criterion for quillen adjunction after localisation}, since if $X$ is a $n$-quasi-groupoid, then $\Hom_X(x,y)$ is a $(n-1)$-quasi-groupoid by definition.

Therefore, applying $\Sigma$ to the diagram above, we obtain a homotopy pushout square
\[\begin{tikzcd}
	{\partial \Theta_{n} [\sigma^{\ell}[p-1]]} & {D_1} \\
	{D_1} & {\partial \Theta_{n} [\sigma^{\ell}[p]]}
	\arrow[from=1-1, to=1-2]
	\arrow[from=1-1, to=2-1]
	\arrow[from=2-1, to=2-2]
	\arrow[from=1-2, to=2-2]
	\arrow["h"{description, pos=0.3}, "\lrcorner"{anchor=center, pos=0.125, rotate=180}, draw=none, from=2-2, to=1-1]
\end{tikzcd}\]
in $\nqcatgp$, and we conclude by noting that $\Theta_n[D_1] \to \Theta_n[D_0]$ is a weak equivalence in this model category by definition.

It remains only to treat the case where $\ell > 0$ and $p = 0$, and so $\ell = k+1 < n$. In this case, we have $\Theta_n[\sigma^{\ell}[0]] = \Theta_n[D_{k+1}]$. Consider the following pushout square
\[\begin{tikzcd}
	{\partial \Theta_{n} [D_k]} & {\Theta_n[D_k]} \\
	{\Theta_n[D_k]} & {\partial \Theta_{n} [D_{k+1}]}
	\arrow[from=1-1, to=1-2]
	\arrow[from=1-1, to=2-1]
	\arrow[from=2-1, to=2-2]
	\arrow[from=1-2, to=2-2]
	\arrow["\lrcorner"{anchor=center, pos=0.125, rotate=180}, draw=none, from=2-2, to=1-1]
\end{tikzcd}\]
Note that it is a homotopy pushout, that $\partial \Theta_{n} [D_k]$ is a $(k-1)$-sphere by induction and that $\Theta_n[D_k]$ is contractible by the definition of $\nqcatgp$. Therefore, $\partial \Theta_{n} [D_{k+1}]$ is a model for $\Sph^k$.
\end{proof}

\begin{parag} \label{parag model st for groupoidal and truncated}
    We denote by $\nqcatgpkt$ the localisation of $\nqcatgp$ whose fibrant objects are $(n+k)$-truncated $n$-quasi-groupoids. By Theorem \ref{theorem truncated}, it is the localisation of $\nqcatgp$ with respect to the boundary inclusion $\partial \Theta_n[\sigma^{n-1}[k+3]] \to \Theta_n[\sigma^{n-1}[k+3]]$.
\end{parag}

\begin{parag} \label{review of homotopy types}
Let $m \geq 0$. Homotopy $m$-types can be characterised as local objects in the Kan-Quillen model structure on simplicial sets. Indeed, a Kan complex $X$ is an $m$-type if and only if it is local with respect to the boundary inclusion $\partial \Delta[m+2] \to \Delta[m+2]$ in this model structure (see for example \cite[Corollary 3.25]{campbell2020truncated}). Therefore, by Theorem \ref{smith existence theorem}, the Bousfield localisation of the Kan-Quillen model structure with respect to $\delta_{m+2}: \partial \Delta[m+2] \to \Delta[m+2]$ produces a model structure whose fibrant objects are homotopy $m$-types, which we denote by $\kancxm$.
\end{parag}

\begin{theorem} \label{quillen eq truncated n-q-groupoids and homotopy n-types}
    The adjunction (\ref{adj delta theta_n}) is a Quillen equivalence
    $$i_!: \kancxnk \rightleftarrows \nqcatgpkt: i^*$$
\end{theorem}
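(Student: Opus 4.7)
The strategy is to transfer the Quillen equivalence $i_!: \kancx \rightleftarrows \nqcatgp: i^*$ of Theorem \ref{quillen eq kan cx groupoidal nqcat} to the truncated versions using Theorem \ref{transfer of localisation}, and then identify the resulting localisation of $\nqcatgp$ with $\nqcatgpkt$ via the sphere argument from Lemma \ref{sphere lemma}.

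Applying Theorem \ref{transfer of localisation} to the class $S = \{\delta_{n+k+2}\}$ in $\kancx$ (for which $L_S \kancx = \kancxnk$ by §\ref{review of homotopy types}) yields a Quillen equivalence
\[
    i_!: \kancxnk \rightleftarrows L_{i_!(\delta_{n+k+2})} \nqcatgp: i^*,
\]
using that every object of $\kancx$ is cofibrant. By the isomorphism $i_! \cong \pi^*$ recalled in the proof of Proposition \ref{quillen adj inclusion delta}, a direct computation gives $i_!(\partial \Delta[m]) = \partial \Theta_n[i[m]]$, so the localising morphism on the right is simply the boundary inclusion $\delta_{i[n+k+2]}$. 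It therefore remains to prove the equality $L_{\delta_{i[n+k+2]}} \nqcatgp = \nqcatgpkt = L_{\delta_{\sigma^{n-1}[k+3]}} \nqcatgp$; since a Bousfield localisation is determined by its fibrant objects, this reduces to showing that for every $n$-quasi-groupoid $X$, being $\delta_{i[n+k+2]}$-local and being $\delta_{\sigma^{n-1}[k+3]}$-local in $\nqcatgp$ are equivalent conditions.

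At this point the sphere argument intervenes. Lemma \ref{sphere lemma}, applied with $(\ell, p) = (0, n+k+2)$ and with $(\ell, p) = (n-1, k+3)$ (both satisfying $\ell + p = n+k+2$), shows that $\partial \Theta_n[i[n+k+2]]$ and $\partial \Theta_n[\sigma^{n-1}[k+3]]$ are both models for $\Sph^{n+k+1}$ in $\nqcatgp$. The two representable targets $\Theta_n[i[n+k+2]]$ and $\Theta_n[\sigma^{n-1}[k+3]]$ are contractible in $\nqcatgp$ by Proposition \ref{reprsentable is contractible in nqcatgp}. Composing each $\delta_\theta$ with the weak equivalence $\Theta_n[\theta] \to \Theta_n[0]$, the local-ness of a fibrant $X$ with respect to $\delta_\theta$ is equivalent to its local-ness with respect to the composite $\partial \Theta_n[\theta] \to \Theta_n[0]$; this latter condition depends only on the weak equivalence type of $\partial \Theta_n[\theta]$ in $\nqcatgp$. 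Since both sources model $\Sph^{n+k+1}$, the two local-ness conditions coincide, completing the identification of model structures.

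The main obstacle is making rigorous the claim that local-ness of a fibrant $X$ with respect to a cofibration $A \to \Theta_n[0]$ depends only on the weak equivalence type of $A$: the two sources $\partial \Theta_n[i[n+k+2]]$ and $\partial \Theta_n[\sigma^{n-1}[k+3]]$ are not directly connected by a morphism in $\tset$. This can be handled either by constructing a zigzag of weak equivalences in the arrow category through a common fibrant replacement modelling $\Sph^{n+k+1} \to \Theta_n[0]$, or by arguing abstractly that for fibrant $X$ the functor $\underline{\Ho \nqcatgp}(-, X)$ sends weak equivalences between cofibrant objects to weak equivalences and that the comparison map into the terminal representable is canonical up to homotopy.
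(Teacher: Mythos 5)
Your proposal is correct and takes essentially the same route as the paper: transfer the localisation along $i_!$ via Theorem \ref{transfer of localisation}, identify $i_!(\delta_{[n+k+2]})$ with the boundary inclusion of $\Theta_n[i[n+k+2]]$, and then use Lemma \ref{sphere lemma} together with Proposition \ref{reprsentable is contractible in nqcatgp} to show the resulting localisation of $\nqcatgp$ is $\nqcatgpkt$. The obstacle you flag at the end dissolves because $\Theta_n[0]$ is the terminal object of $\tset$, so any zigzag of weak equivalences between the two boundaries automatically commutes over it; the paper exploits exactly this in a commutative diagram and concludes by 2-out-of-3 together with Proposition \ref{proposition succesive localisations} (each boundary inclusion becomes a weak equivalence in the other localisation, so further localising changes nothing), which is the same mechanism as your comparison of $\delta$-local fibrant objects via $\underline{\Ho \nqcatgp}(-,X)$.
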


\begin{proof}
    Starting from the Quillen equivalence
    $$i_!: \kancx \rightleftarrows \nqcatgp: i^*$$
    of Theorem \ref{quillen eq kan cx groupoidal nqcat}, we can localize $\kancx$ with respect to the morphism $\delta_{[n+k+2]}: \partial \Delta[n+k+2]  \to \Delta[n+k+2]$ and $\nqcatgp$ with respect to the image of this morphism by the (left-derived) functor $i_!$, i.e., by 
    $$i_!(\delta_{[n+k+2]}: \partial \Delta[n+k+2]  \to \Delta[n+k+2]) = (\delta_{[n+k+2]}: \partial \Theta_n[i[n+k+2]]  \to \Theta_n[i[n+k+2]])$$

    After localisation, we obtain a new Quillen equivalence
    $$i_! \kancxnk \rightleftarrows \nqcatgp_{\loc}: i^*$$
    by Theorem \ref{transfer of localisation}.

    We want to show that the localised model structure $\nqcatgp_{\loc}$ is exactly $\nqcatgpkt$. Recall from §\ref{parag model st for groupoidal and truncated} that $\nqcatgpkt$ is the localisation of $\nqcatgp$ with respect to the boundary inclusion $\delta_{\sigma^{n-1}[k+3]}: \partial \Theta_n[\sigma^{n-1}[k+3]] \to \Theta_n[\sigma^{n-1}[k+3]]$.

    Thanks to Lemma \ref{sphere lemma}, we know that both $\partial \Theta_n[i[n+k+2]]$ and $\partial \Theta_n[\sigma^{n-1}[k+3]]$ are models for the $(n+k+1)$-sphere in $\nqcat$, so they are weakly equivalent in this model structure. Moreover, by Proposition \ref{reprsentable is contractible in nqcatgp}, all representable $\Theta_n$-sets are contractible in $\nqcatgp$. We can then draw the following commutative diagram
\[\begin{tikzcd}
	{\partial \Theta_n[i[n+k+2]]} & \bullet & \ldots & \bullet & {\partial \Theta_n[\sigma^{n-1}[k+3]]} \\
	{\Theta_n[i[n+k+2]]} && {\Theta_n[0]} && {\Theta_n[\sigma^{n-1}[k+3]]}
	\arrow["{\delta_{[n+k+2]}}"', from=1-1, to=2-1]
	\arrow["{\delta_{\sigma^{n-1}[k+3]}}", from=1-5, to=2-5]
	\arrow["\sim", from=1-1, to=1-2]
	\arrow["\sim"', from=1-3, to=1-2]
	\arrow["\sim"', from=1-5, to=1-4]
	\arrow["\sim", from=1-3, to=1-4]
	\arrow["\sim"', from=2-1, to=2-3]
	\arrow["\sim", from=2-5, to=2-3]
	\arrow[from=1-2, to=2-3]
	\arrow[from=1-4, to=2-3]
	\arrow[from=1-1, to=2-3]
	\arrow[from=1-5, to=2-3]
\end{tikzcd}\]
    where the arrows marked with $\sim$ are weak equivalences in $\nqcatgp$ and the bullets represent some objects appearing in the zigzag of weak equivalences.

    In $\nqcatgp_{\loc}$ (resp. $\nqcatgpkt$), the left (resp. right) vertical arrow is a weak equivalence, so the right (resp. left) vertical arrow is also a weak equivalence, by 2-out-of-3. Since localising a model structure with respect to a weak equivalence does not change anything, and since localisations commute (cf. Proposition \ref{proposition succesive localisations}), we have:
    $$\nqcatgp_{\loc} = L_{\delta_{\sigma^{n-1}[k+3]}} \nqcatgp_{\loc} = L_{\delta_{[n+k+2]}} \nqcatgpkt = \nqcatgpkt$$
\end{proof}

We have seen that, working with different model structures on the category of $\Theta_n$-sets, we are able to obtain models for the theory of $(\infty, n)$-categories (with Ara's model structure), of $(n+k, n)$-categories (with the truncated model structures) and of $(n+k, 0)$-categories (with the groupoidal model structures) for $k \geq 0$. We actually expect that it is possible to model any theory of $(m+k, m)$-categories inside the realm of $\Theta_n$-sets, for $m \leq n$ and $0 \leq k \leq \infty$. For example, the passage from $(\infty, n)$ to $(\infty, m)$ should be made by localizing by all morphisms $D_{j} \to D_{j-1}$, for $m < j \leq n$. The case $n = 2$, $m = 1$ is proven by Campbell in \cite[Theorem 11.14]{campbell2020homotopy}. However, for the general case one should develop other techniques, that are not within the scope of this work.

\bibliographystyle{alpha}
\bibliography{sample}

\begin{thebibliography}{Joy08b}

\bibitem[Ara14]{ara2014higher}
Dimitri Ara.
\newblock Higher quasi-categories vs higher {R}ezk spaces.
\newblock {\em Journal of K-theory}, 14(3):701--749, 2014.

\bibitem[Bar10]{barwick2010left}
Clark Barwick.
\newblock On left and right model categories and left and right bousfield
  localizations.
\newblock {\em Homology, Homotopy and Applications}, 12(2):245--320, 2010.

\bibitem[Ber02]{berger2002cellular}
Clemens Berger.
\newblock A cellular nerve for higher categories.
\newblock {\em Advances in Mathematics}, 169(1):118--175, 2002.

\bibitem[Ber07]{berger2007iterated}
Clemens Berger.
\newblock Iterated wreath product of the simplex category and iterated loop
  spaces.
\newblock {\em Advances in Mathematics}, 213(1):230--270, 2007.

\bibitem[Ber20]{bergner2020survey}
Julia~E Bergner.
\newblock A survey of models for ($\infty$, n)-categories.
\newblock {\em Handbook of Homotopy Theory, edited by Haynes Miller, Chapman \&
  Hall/CRC}, pages 263--295, 2020.

\bibitem[Bri22]{brittes2022groupoidal}
Victor Brittes.
\newblock Groupoidal 2-quasi-categories and homotopy 2-types, 2022.

\bibitem[Cam20]{campbell2020homotopy}
Alexander Campbell.
\newblock A homotopy coherent cellular nerve for bicategories.
\newblock {\em Advances in Mathematics}, 368:107138, 2020.

\bibitem[Cis06]{cisinski2006prefaisceaux}
Denis-Charles Cisinski.
\newblock {\em Les pr{\'e}faisceaux comme mod{\`e}les des types d'homotopie}.
\newblock Soci{\'e}t{\'e} math{\'e}matique de France, 2006.

\bibitem[CL20]{campbell2020truncated}
Alexander Campbell and Edoardo Lanari.
\newblock On truncated quasi-categories.
\newblock {\em Cahiers de topologie et g{\'e}om{\'e}trie diff{\'e}rentielle
  cat{\'e}goriques}, 61(2):154--207, 2020.

\bibitem[CLS22]{curien2022rigidification}
Pierre-Louis Curien, Muriel Livernet, and Gabriel Saadia.
\newblock Rigidification of cubical quasi-categories, 2022.

\bibitem[Hir03]{hirschhorn2003model}
Philip~S Hirschhorn.
\newblock {\em Model categories and their localizations}.
\newblock Number~99. American Mathematical Soc., 2003.

\bibitem[Joy08a]{joyal2008notes}
Andr{\'e} Joyal.
\newblock Notes on quasi-categories.
\newblock preprint, 2008.

\bibitem[Joy08b]{joyal2008thetheory}
André Joyal.
\newblock The theory of quasi-categories and its applications.
\newblock preprint, 2008.

\bibitem[Lur09]{lurie2009higher}
Jacob Lurie.
\newblock {\em Higher topos theory}.
\newblock Princeton University Press, 2009.

\bibitem[MS93]{moerdijk1993algebraic}
Ieke Moerdijk and Jan-Alve Svensson.
\newblock Algebraic classification of equivariant homotopy 2-types, {I}.
\newblock {\em Journal of Pure and Applied Algebra}, 89(1-2):187--216, 1993.

\bibitem[Rez10]{rezk2010cartesian}
Charles Rezk.
\newblock A cartesian presentation of weak n--categories.
\newblock {\em Geometry \& Topology}, 14(1):521--571, 2010.

\end{thebibliography}

\end{document}